\documentclass[preprint,10pt]{elsarticle}

\usepackage{amssymb}
\usepackage{amsmath}
\usepackage{graphicx}%
\usepackage{multirow}%
\usepackage{amsfonts}%
\usepackage{amsthm}%
\usepackage{mathrsfs}%
\usepackage[title]{appendix}%
\usepackage{xcolor}%
\usepackage{textcomp}%
\usepackage{manyfoot}%
\usepackage{booktabs}%
\usepackage{algorithm}%
\usepackage{algorithmicx}%
\usepackage{algpseudocode}%
\usepackage{listings}%
\usepackage{tabularx}
\usepackage{makecell}
\usepackage{ragged2e}
\usepackage{lineno}
\usepackage{subfigure}
\usepackage{float}
\usepackage{caption}
\newtheorem{theorem}{Theorem}[section]
\newtheorem{lemma}[theorem]{Lemma}%
\newtheorem{proposition}[theorem]{Proposition}%
\newtheorem{corollary}[theorem]{Corollary}%
\newtheorem{example}{Example}[section]%
\newtheorem{remark}{Remark}[section]%
\newtheorem{definition}{Definition}[section]%

\begin{document}

\begin{frontmatter}


\title{Optimal Fluctuations for Discrete-time Markov Jump Processes}
\author[1]{Feng Zhao}
\ead{zhaofengnuaa@163.com}
\author[1]{Jinjie Zhu}
\ead{jinjiezhu@nuaa.edu.cn}
\author[2]{Yang Li}
\ead{liyangbx5433@163.com}
\author[1]{Xianbin Liu\corref{cor1}}
\ead{xbliu@nuaa.edu.cn}
\author[1]{Dongping Jin\corref{cor1}}
\ead{jindp@nuaa.edu.cn}
\cortext[cor1]{Corresponding author.}

\affiliation[1]{organization={State Key Laboratory of Mechanics and Control for Aerospace Structures, College of Aerospace Engineering, Nanjing University of Aeronautics and Astronautics},
            addressline={29 Yudao Street}, 
            city={Nanjing},
            postcode={210016}, 
            country={China}}
\affiliation[2]{organization={School of Automation,Nanjing University of Science and Technology},addressline={200 Xiaolingwei Street},city={Nanjing},postcode={210094},country={China}}            
            
\begin{abstract}
	In the last few decades, noise‐induced large fluctuations and transition phenomena have garnered significant attention in a variety of scientific contexts. The concept of prehistory probability has been proposed within the framework of Langevin dynamics to illustrate the focusing effect of large fluctuation paths onto a deterministic trajectory known as the optimal path. The present paper is devoted to showing that such a focusing effect persists within the framework of discrete-time Markov jump processes. Our proof leverages large deviation theory and the concept of time reversal for Markov jump processes. A key finding is the relationship identified between the optimal path and the time reversal of a specific family of probability distributions. This theoretical framework elucidates how an essentially deterministic mechanism can emerge from rare stochastic events in discrete-time Markov jump systems.
\end{abstract}

\begin{keyword}
	Large Deviations \sep Optimal Path \sep Prehistory Probability \sep Time Reversal \sep Discrete-time Markov Jump Processes
\end{keyword}
\end{frontmatter}
\section{Introduction}\label{sec01}
Consider a class of $d$-dimensional right-continuous random step functions $\{\boldsymbol{x}^\varepsilon(t):t\in[0,T]\}$ defined by
\begin{equation}\label{eq010101}
	\left\{\begin{aligned}
		&\boldsymbol{x}^\varepsilon(t)=\boldsymbol{x}^\varepsilon(n\varepsilon), &&t\in[n\varepsilon,(n+1)\varepsilon),\\
		&\boldsymbol{x}^\varepsilon(t)=\boldsymbol{x}^\varepsilon(\lfloor{T/\varepsilon}\rfloor\varepsilon), &&t\in[\lfloor{T/\varepsilon}\rfloor\varepsilon,T],\\
		&\boldsymbol{x}^\varepsilon((n+1)\varepsilon)=\boldsymbol{x}^\varepsilon(n\varepsilon)+\varepsilon\boldsymbol{Z}_{n}, &&n=0,1,\cdots,\lfloor{T/\varepsilon}\rfloor-1,
	\end{aligned}\right.
\end{equation}
where $\{\boldsymbol{Z}_{n}\}$ is a jump process whose conditional jump probability at time $n$ is stationary (i.e., independent of the variable $n$), independent of $\boldsymbol{Z}_{k}$ for $k<n$, dependent only on the current state of $\boldsymbol{x}^\varepsilon(n\varepsilon)$ (i.e., independent of previous states of $\boldsymbol{x}^\varepsilon(k\varepsilon)$ for $k<n$), and given by the probability measure $\mu_{\boldsymbol{x}}$. Specifically, for any Borel set $D\subset\mathbb{R}^d$ and $n=0,1,\cdots,\lfloor{T/\varepsilon}\rfloor-1$,
\begin{equation*}
	P^\varepsilon\left(\boldsymbol{Z}_{n}\in D\mid\boldsymbol{x}^\varepsilon(n\varepsilon)=\boldsymbol{x},\mathcal{F}_{<n}\right)=\mu_{\boldsymbol{x}}(D),
\end{equation*}
 where $\mathcal{F}_{<n}$ denotes the history of $\boldsymbol{x}^\varepsilon(k\varepsilon)$ before $n$. We assume $0<\varepsilon\ll{1}$ throughout. Consequently, $\boldsymbol{x}^{\varepsilon}(t)$ exhibits $\varepsilon^{-1}$ jumps per unit time, while the magnitude of each jump is scaled by $\varepsilon$ as compared to that of the base process $\boldsymbol{x}^{1}(t)$ (i.e., a process corresponding to the discrete-time Markov chain $\boldsymbol{X}_{n+1}=\boldsymbol{X}_n+\boldsymbol{Z}_n$ for $n=0,1,\cdots$). This process admits the unified representation:
\begin{equation}\label{eq010102}
	\left\{\begin{aligned}
		&\boldsymbol{x}^\varepsilon(t)=\boldsymbol{x}^\varepsilon(0), &&t\in[0,\varepsilon),\\
		&\boldsymbol{x}^\varepsilon(t)=\boldsymbol{x}^\varepsilon(0)+\varepsilon\sum_{n=0}^{\lfloor{t/\varepsilon}\rfloor-1}\boldsymbol{Z}_{n}, &&t\in[\varepsilon,T].
	\end{aligned}\right.
\end{equation}

Most Markov models arising in statistical physics \cite{Gardiner_1985,Van_1992}, chemistry \cite{Kurtz_1972a,Gillespie_2007,Kurtz_2015}, queueing theory \cite{Newell_2013,Shwartz_1995}, and communications theory \cite{Shwartz_1995} can be formulated in this way. For instance, if $\mu_{\boldsymbol{x}}$ is a state-dependent Gaussian measure with mean vector $\boldsymbol{b}(\boldsymbol{x})\in\mathbb{R}^d$ and covariance matrix $\boldsymbol{\sigma}(\boldsymbol{x})\boldsymbol{\sigma}(\boldsymbol{x})^{\top}/2\in\mathbb{R}^{d\times{d}}$, then the process $\boldsymbol{x}^{\varepsilon}(t)$ reduces to the Euler discretization \cite{Oksendal_2013} of the diffusion process:
\begin{equation}\label{eq010103}
	\boldsymbol{y}^{\varepsilon}(t)=\boldsymbol{y}^{\varepsilon}(0)+\int_{0}^{t}\boldsymbol{b}(\boldsymbol{y}^{\varepsilon}(s))\mathrm{d}s+\sqrt{\varepsilon/2}\int_{0}^{t}\boldsymbol{\sigma}(\boldsymbol{y}^{\varepsilon}(s))\mathrm{d}\boldsymbol{w}(s),\quad t\in[0,T],
\end{equation}
where $\boldsymbol{w}(t)$ is a standard $d$-dimensional Wiener process. Alternatively, if $\boldsymbol{Z}_{n}$ can be expressed as a linear combination of $N$ mutually independent Poisson-distributed random variables with state-dependent intensity parameters $R_{i}(\boldsymbol{x})>0$ and coefficients $\boldsymbol{\nu}_{i}\in\mathbb{R}^d$ ($i=1,2,\cdots,N$), then we obtain the tau-leaping discretization \cite{Kurtz_2015} of the stochastic biochemical reaction model:
\begin{equation}\label{eq010104}
	\boldsymbol{y}^{\varepsilon}(t)=\boldsymbol{y}^{\varepsilon}(0)+\varepsilon\sum_{i=1}^{N}Y_{i}\left(\varepsilon^{-1}\int_{0}^{t}R_{i}(\boldsymbol{y}^{\varepsilon}(s))\mathrm{d}s\right)\boldsymbol{\nu}_{i},\quad t\in[0,T],
\end{equation}
where $\{Y_{i}\}$ denote $N$ independent standard Poisson processes. Unless explicitly stated otherwise, we do not specify $\mu_{\boldsymbol{x}}$'s exact form. At this point, the aforementioned process $\boldsymbol{x}^{\varepsilon}(t)$ can be regarded as a natural discrete-time extension of both the diffusion model (\ref{eq010103}) and the continuous-time Markov chain (\ref{eq010104}).

Let $\boldsymbol{b}(\boldsymbol{x})\triangleq\int_{\mathbb{R}^d}\boldsymbol{z}\mu_{\boldsymbol{x}}(\mathrm{d}\boldsymbol{z})$. Under suitable conditions, it is not too difficult to show that as $\varepsilon\to {0}$, the process $\{\boldsymbol{x}^{\varepsilon}(t):t\in[0,T]\}$ converges in probability to a deterministic trajectory $\{\boldsymbol{x}^{0}(t):t\in[0,T]\}$, which is defined as the solution to the ordinary differential equation
\begin{equation}\label{eq010105}
	\boldsymbol{x}^{0}(t)=\boldsymbol{x}^{0}(0)+\int_{0}^{t}\boldsymbol{b}(\boldsymbol{x}^{0}(s))\mathrm{d}s,\quad t\in[0,T],
\end{equation} 
with initial condition $\boldsymbol{x}^{\varepsilon}(0)=\boldsymbol{x}^{0}(0)=\boldsymbol{x}_{0}$ (cf. \cite[Theorem 4.7]{Kurtz_1970}). That is, for sufficiently small $\varepsilon$, $\boldsymbol{x}^{\varepsilon}(t)$ predominantly oscillates stochastically in the vicinity of $\boldsymbol{x}^{0}(t)$, with large excursions occurring occasionally. This is a consequence of the type of the law of large numbers. Given this, it is reasonable to interpret the stochastic model (\ref{eq010102}) as a weak random perturbation of the dynamical system (\ref{eq010105}).

In contrast to the law of large numbers, the large deviation principle (LDP) (cf. \cite{Venttsel_1977a,Venttsel_1977b,Venttsel_1980,Venttsel_1983,Wentzell_2012}) provides a tool to estimate the probabilities of rare events more accurately. To be specific, let $\mathcal{G}$ be a regular set (cf. \cite[Chapter 3, Theorem 3.4]{Freidlin_2012}) consisting of $\mathbb{R}^d$-valued functions on $[0,T]$. Then, under appropriate conditions,
\begin{equation*}
	P^{\varepsilon}_{\boldsymbol{x}_0}\left(\{\boldsymbol{x}^{\varepsilon}(t):t\in[0,T]\}\in\mathcal{G}\right)\simeq\exp\left(-\varepsilon^{-1}\inf_{\boldsymbol{\phi}\in\mathcal{G}}I_{[0,T]}\left(\{\boldsymbol{\phi}(t):t\in[0,T]\}\right)\right),
\end{equation*}
as $\varepsilon\to{0}$, where $P_{\boldsymbol{x}_0}^{\varepsilon}$ denotes the probability measure conditioned on $\boldsymbol{x}^{\varepsilon}(0)=\boldsymbol{x}_0$, and $I_{[0,T]}$ is a good rate function to be defined explicitly later. Moreover, if $\inf_{\boldsymbol{\phi}\in\text{cl}(\mathcal{G})}I_{[0,T]}\left(\boldsymbol{\phi}\right)$ is attained at a unique path $\boldsymbol{\phi}^{*}$, then for any $\delta>0$,
\begin{equation*}
	\lim_{\varepsilon\to{0}}\frac{P^{\varepsilon}_{\boldsymbol{x}_0}\left(\{\boldsymbol{x}^{\varepsilon}(t):t\in[0,T]\}\in\mathcal{G}\cap\{\boldsymbol{\phi}:\sup_{t\in[0,T]}\left\vert\boldsymbol{\phi}(t)-\boldsymbol{\phi}^{*}(t)\right\vert<\delta\}\right)}{P^{\varepsilon}_{\boldsymbol{x}_0}\left(\{\boldsymbol{x}^{\varepsilon}(t):t\in[0,T]\}\in\mathcal{G}\right)}=1.
\end{equation*}
Since $I_{[0,T]}(\boldsymbol{\phi})\geq{0}$ with equality if and only if $\boldsymbol{\phi}(t)=\boldsymbol{x}^{0}(t)$ for all $t\in[0,T]$, it follows that $\inf_{\boldsymbol{\phi}\in\mathcal{G}}I_{[0,T]}\left(\boldsymbol{\phi}\right)>0$ whenever the unperturbed trajectory $\boldsymbol{x}^{0}$ lies outside the closure $\text{cl}(\mathcal{G})$ of $\mathcal{G}$. In such cases, $\mathcal{G}$ represents a rare event with an exponentially small probability dominated by $\inf_{\boldsymbol{\phi}\in\mathcal{G}}I_{[0,T]}\left(\boldsymbol{\phi}\right)$, and any rare large fluctuations in $\mathcal{G}$, when they occur, are more likely to be close to the minimizer $\boldsymbol{\phi}^{*}$. Consequently, $\boldsymbol{\phi}^{*}$ is highlighted as it unveils the nearly deterministic mechanism concealed behind rare noisy phenomena. The variational principle can hence be used to identify the optimal fluctuation in $\mathcal{G}$ as the solution to $\boldsymbol{\phi}^{*}=\arg\min_{\boldsymbol{\phi}\in\mathcal{G}}I_{[0,T]}\left(\boldsymbol{\phi}\right)$. Numerous numerical methods have been developed to solve this optimization problem. For further details on this topic, see References \cite{E_2004,Matthias_2008,Tobias_2019,Li_2021,Ruben_2023,Nocedal_2006}.

The present paper further explores the optimal fluctuation in $\mathcal{G}$ for the specific case where  $\mathcal{G}=\{\boldsymbol{\phi}:\boldsymbol{\phi}(0)=\boldsymbol{x}_0,\boldsymbol{\phi}(T)=\boldsymbol{x}_T\}$. Instead of solving the corresponding constrained optimization problem directly, our primary objective is to provide an alternative characterization of the optimal fluctuation in this non-stationary setting. The idea originates from the work of Dykman et al. \cite{Dykman_1992,Dykman_1996,ANISHCHENKO_2001}, who first proposed the concept of prehistory probability and showed that it can be used not only to pinpoint the location of the optimal path $\boldsymbol{\phi}^{*}$ but also to capture the statistical features of nearby trajectories. The original prehistorical description of optimal fluctuations was formulated exclusively for stationary Langevin models, whose extension to non-stationary cases remained undeveloped until recently. In \cite{Zhao_2022}, we leveraged a result from \cite{Haussmann_1986,Millet_1989,Cattiaux_2023} to derive a time-reversed diffusion process so that it is possible to interpret the prehistory probability as the conditional probability of the reversed process. In doing so, the focusing effect of large fluctuation paths onto the optimal path $\boldsymbol{\phi}^{*}$ can then be proved by means of limit theorems for the reversed process. This approach naturally reveals a profound connection between the optimal path and a time-reversed process, thereby facilitating a more intuitive understanding of the essence of the optimal fluctuation. Furthermore, in light of recent advances in the time reversal of Markov jump processes \cite{Conforti_2022}, analogous conclusions have also been shown to be applicable to stochastic processes of the form (\ref{eq010104}) \cite{Zhao_2025}.

In this paper, we generalize the aforementioned results in \cite{Zhao_2022,Zhao_2025} to a broad class of discrete-time Markov jump processes of the form (\ref{eq010102}) and show how the optimal fluctuation in $\mathcal{G}$ is closely related to a time-reversed discrete-time Markov jump process. The remainder of this paper is organized as follows. A concise exposition of the large deviation principle and several corollaries pertinent to these Markov jump processes is presented in Sec. 2. The definition of prehistory probability and its relationship with the time reversal of a specific family of distributions are provided in Sec. 3. The prehistorical description of optimal fluctuations is subsequently established in Sec. 4 by means of the law of large numbers. Several illustrative examples are given in Sec. 5, and the conclusions of this study are set out in Sec. 6. The proof of Proposition \ref{theo040101} and an algorithm for calculating the prehistory probability are placed in the Appendix.

\section{Preliminaries}\label{sec02}
\subsection{Large Deviation Principle}\label{sec0201}
For a fixed $T>0$, let $\mathcal{D}([0,T];\mathbb{R}^d)$ denote the space of all $\mathbb{R}^d$-valued functions on $[0,T]$ that are right-continuous with left limits. We equip this space with the uniform-convergence metric $\rho_{[0,T]}$, which is defined by
\begin{equation*}
	\rho_{[0,T]}\left(\left\{\boldsymbol{x}(t):t\in [0,T]\right\},\left\{\boldsymbol{y}(t):t\in [0,T]\right\}\right)\triangleq\sup_{t\in[0,T]}\left\vert\boldsymbol{x}(t)-\boldsymbol{y}(t)\right\vert,
\end{equation*}
where $\vert\cdot\vert$ denotes the Euclidean norm. Then $\left(\mathcal{D}([0,T];\mathbb{R}^d),\rho_{[0,T]}\right)$ forms a metric space.

Define the Hamiltonian $H$ and Lagrangian $L$ as:
\begin{equation*}
	H(\boldsymbol{x},\boldsymbol{\alpha})\triangleq\ln\int_{\mathbb{R}^d}e^{\boldsymbol{\alpha}\cdot\boldsymbol{z}}\mu_{\boldsymbol{x}}(\mathrm{d}\boldsymbol{z}),
\end{equation*}
\begin{equation*}
	L(\boldsymbol{x},\boldsymbol{\beta})\triangleq\sup_{\boldsymbol{\alpha}\in\mathbb{R}^d}\left[\boldsymbol{\alpha}\cdot\boldsymbol{\beta}-H(\boldsymbol{x},\boldsymbol{\alpha})\right].
\end{equation*}
It can be shown that $H(\boldsymbol{x},\boldsymbol{\alpha})$ is convex and analytic in $\boldsymbol{\alpha}$ at interior points of the set $\{\boldsymbol{\alpha}:H(\boldsymbol{x},\boldsymbol{\alpha})<\infty\}$, with a strictly positive definite Hessian matrix in $\boldsymbol{\alpha}$. Similarly, $L(\boldsymbol{x},\boldsymbol{\beta})$ possesses analogous properties in $\boldsymbol{\beta}$ on the interior of $\{\boldsymbol{\beta}:L(\boldsymbol{x},\boldsymbol{\beta})<\infty\}$ (cf. \cite[p. 232]{Venttsel_1977a}).

The Freidlin-Wentzell large deviation principle for $\boldsymbol{x}^{\varepsilon}(t)$ can be expressed in the following way (cf. \cite{Venttsel_1977a,Venttsel_1977b,Venttsel_1980,Venttsel_1983,Wentzell_2012}), with the associated rate function on $\mathcal{D}([0,T];\mathbb{R}^d)$ given by 

\begin{equation*}
	I_{[0,T]}\left(\left\{\boldsymbol{\phi}(t):t\in[0,T]\right\}\right)\triangleq\begin{cases}
		\int_{0}^{T}L(\boldsymbol{\phi}(t),\dot{\boldsymbol{\phi}}(t))\mathrm{d}t, &\text{if $\boldsymbol{\phi}$ is absolutely continuous}, \\ 
		\infty,&\text{otherwise}.
	\end{cases}
\end{equation*}
\begin{theorem}\label{theo020101}
	Suppose that:
	\begin{itemize}
		\item[(a)] There exists an everywhere finite non-negative convex function $\bar{H}(\boldsymbol{\alpha})$ such that $\bar{H}(\boldsymbol{0})=0$ and $H(\boldsymbol{x},\boldsymbol{\alpha})\leq\bar{H}(\boldsymbol{\alpha})$ for all $\boldsymbol{x},\,\boldsymbol{\alpha}\in\mathbb{R}^d$;
		\item[(b)] The function $L(\boldsymbol{x},\boldsymbol{\beta})$ is finite for all values of the arguments; for any positive $R$, there exist positive constants $M_R$ and $m_R$ such that for all $\boldsymbol{x},\boldsymbol{\lambda}\in\mathbb{R}^d$ and all $\boldsymbol{\beta}\in\mathbb{R}^d$ with $\vert\boldsymbol{\beta}\vert\leq{R}$,
		\begin{equation*}
			L(\boldsymbol{x},\boldsymbol{\beta})\leq M_R,\;\; \vert\nabla_{\boldsymbol{\beta}}L(\boldsymbol{x},\boldsymbol{\beta})\vert\leq M_R,\;\; \sum_{i,j}\frac{\partial^{2}L}{\partial\beta_{i}\partial\beta_{j}}(\boldsymbol{x},\boldsymbol{\beta})\lambda_{i}\lambda_{j}\geq m_R\sum_{i}\lambda_{i}^2;A
		\end{equation*}
		\item[(c)] The modulus of continuity
		\begin{equation*}
			\Delta L(\delta)\triangleq\sup_{\vert\boldsymbol{x}-\boldsymbol{y}\vert<\delta}\sup_{\boldsymbol{\beta}}\frac{L(\boldsymbol{x},\boldsymbol{\beta})-L(\boldsymbol{y},\boldsymbol{\beta})}{1+L(\boldsymbol{y},\boldsymbol{\beta})}\to{0},\quad\text{as}\;\delta\to{0}.
		\end{equation*}
	\end{itemize}
	Then the family of processes $\{\boldsymbol{x}^{\varepsilon}(t),P^{\varepsilon}_{\boldsymbol{x}_{0}}\}$ satisfies a large deviation principle with rate function $I_{[0,T]}$ (also called the normalized action functional) and normalizing coefficient $\varepsilon^{-1}$, uniformly with respect to the initial point $\boldsymbol{x}_0\in\mathbb{R}^d$. Specifically:
	\begin{itemize}
		\item[(a)] $I_{[0,T]}$ is lower semi-continuous on $\left(\mathcal{D}([0,T];\mathbb{R}^d),\rho_{[0,T]}\right)$;
		\item[(b)] The set $\bigcup_{\boldsymbol{x}_{0}\in{K}}\Phi_{\boldsymbol{x}_{0}}(s)$ is compact for any compact subset $K\subset\mathbb{R}^d$, where $\Phi_{\boldsymbol{x}_{0}}(s)$ is defined for each $\boldsymbol{x}_{0}\in\mathbb{R}^d$ and $s>0$ by
		\begin{equation*}
			\Phi_{\boldsymbol{x}_{0}}(s)\triangleq\{\boldsymbol{\phi}\in\mathcal{D}([0,T];\mathbb{R}^d):\boldsymbol{\phi}(0)=\boldsymbol{x}_{0},I_{[0,T]}\left(\boldsymbol{\phi}\right)\leq{s}\};
		\end{equation*}
		\item[(c)] For any positive $\delta,\gamma,s_{0}$, there exists $\varepsilon_{0}>0$ such that for all $\varepsilon<\varepsilon_{0}$, $\boldsymbol{x}_{0}\in\mathbb{R}^d$ and $\boldsymbol{\phi}\in\Phi_{\boldsymbol{x}_{0}}(s_{0})$,
		\begin{equation}\label{eq020101}
			P^{\varepsilon}_{\boldsymbol{x}_{0}}\left(\rho_{[0,T]}(\boldsymbol{x}^{\varepsilon},\boldsymbol{\phi})<\delta\right)\geq\exp\left(-\varepsilon^{-1}(I_{[0,T]}\left(\boldsymbol{\phi}\right)+\gamma)\right);
		\end{equation}
		\item[(d)] For any positive $\delta,\gamma,s_{0}$, there exists $\varepsilon_{0}>0$ such that for all $\varepsilon<\varepsilon_{0}$, $\boldsymbol{x}_{0}\in\mathbb{R}^d$ and $s\leq{s}_{0}$,
		\begin{equation}\label{eq020102}
			P^{\varepsilon}_{\boldsymbol{x}_{0}}\left(\rho_{[0,T]}(\boldsymbol{x}^{\varepsilon},\Phi_{\boldsymbol{x}_{0}}(s))\geq\delta\right)\leq\exp\left(-\varepsilon^{-1}(s-\gamma)\right).
		\end{equation}
	\end{itemize}
\end{theorem}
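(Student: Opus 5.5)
The plan follows the classical Wentzell scheme for Markov processes with small jumps (as in \cite{Venttsel_1977a,Wentzell_2012}). We first treat the case of a \emph{spatially homogeneous} jump law, $\mu_{\boldsymbol{x}}\equiv\mu$, and then pass to the state-dependent case by freezing coefficients on short time intervals; condition (c) controls the error of freezing.

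Properties (a) and (b) of the conclusion are deterministic statements about $I_{[0,T]}$.

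\begin{itemize}
\item \emph{Lower semicontinuity.} Condition (b) makes $L(\boldsymbol{x},\cdot)$ convex with superlinear growth, uniformly in $\boldsymbol{x}$: strong convexity on balls, together with $H\le\bar H$, gives $L(\boldsymbol{x},\boldsymbol{\beta})\ge\sup_{\boldsymbol{\alpha}}[\boldsymbol{\alpha}\cdot\boldsymbol{\beta}-\bar H(\boldsymbol{\alpha})]$, and the right-hand side is superlinear. Condition (c) gives approximate continuity of $L$ in $\boldsymbol{x}$. Lower semicontinuity then follows by the standard argument: take uniformly convergent $\boldsymbol{\phi}_n\to\boldsymbol{\phi}$ with bounded action. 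By superlinearity the derivatives $\dot{\boldsymbol{\phi}}_n$ are uniformly integrable, so $\boldsymbol{\phi}$ is absolutely continuous. Convexity in $\boldsymbol{\beta}$ (Jensen on small subintervals, i.e. a polygonal approximation) combined with (c) gives $\liminf I(\boldsymbol{\phi}_n)\ge I(\boldsymbol{\phi})$.
\item \emph{Compactness.} The uniform superlinear lower bound makes the family $\bigcup_{\boldsymbol{x}_0\in K}\Phi_{\boldsymbol{x}_0}(s)$ uniformly bounded and equicontinuous. By Arzel\`a--Ascoli it is relatively compact in $\rho_{[0,T]}$, and closedness follows from lower semicontinuity.
\end{itemize}

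For the lower bound (c), fix $\boldsymbol{\phi}\in\Phi_{\boldsymbol{x}_0}(s_0)$ and proceed in three steps.
\begin{itemize}
\item First approximate $\boldsymbol{\phi}$ by a polygonal path with step $\Delta$, with action within $\gamma/3$. This is uniform over the compact set $\Phi_{\boldsymbol{x}_0}(s_0)$; uniformity in $\boldsymbol{x}_0$ follows from translation and the uniform bounds.
\item On each segment with constant velocity $\boldsymbol{\beta}_k$, use an exponential change of measure. Tilt $\mu_{\boldsymbol{x}}$ by $e^{\boldsymbol{\alpha}\cdot\boldsymbol{z}-H(\boldsymbol{x},\boldsymbol{\alpha})}$, choosing $\boldsymbol{\alpha}_k=\nabla_{\boldsymbol{\beta}}L(\boldsymbol{\phi}(t_k),\boldsymbol{\beta}_k)$, which is bounded by $M_R$. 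Under the tilted law the drift is approximately $\boldsymbol{\beta}_k$, and a law of large numbers (Chebyshev/martingale bound, using $\bar H$ for uniform moment bounds) shows that the path stays $\delta$-close to the polygon with probability tending to $1$.
\item The Radon--Nikodym derivative on that event equals
\[
\exp\Bigl(-\varepsilon^{-1}\sum_k\bigl[\boldsymbol{\alpha}_k\cdot\boldsymbol{\beta}_k-H(\boldsymbol{\phi}(t_k),\boldsymbol{\alpha}_k)\bigr]\Delta+O(\delta)\varepsilon^{-1}\Bigr),
\]
which equals $\exp\bigl(-\varepsilon^{-1}(I+\gamma)\bigr)$ after choosing $\delta$ small. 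The state dependence of $H$ along the path is absorbed using (c) together with the Legendre duality.
\end{itemize}

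For the upper bound (d), again proceed in three steps.
\begin{itemize}
\item Use the exponential Chebyshev inequality. The process $\exp\bigl(\varepsilon^{-1}\sum_n[\varepsilon\boldsymbol{\alpha}\cdot\boldsymbol{Z}_n-\varepsilon H(\boldsymbol{x}^\varepsilon(n\varepsilon),\boldsymbol{\alpha})]\bigr)$ is a martingale, and this bounds the probability that the polygonal interpolation $\ell^\varepsilon$ of $\boldsymbol{x}^\varepsilon$ at mesh $\Delta$ has action $\ge s$. To get this, cover the relevant compact set of velocity sequences by finitely many $\boldsymbol{\alpha}$'s, and freeze $\boldsymbol{x}$ on each mesh interval using (c) plus the event that $\boldsymbol{x}^\varepsilon$ does not move more than $\delta'$ within a $\Delta$-interval.
\item Control the deviation of $\boldsymbol{x}^\varepsilon$ from $\ell^\varepsilon$ by $\bar H$ and Chebyshev: the probability that $\rho_{[0,T]}(\boldsymbol{x}^\varepsilon,\ell^\varepsilon)\ge\delta/2$ is at most $\exp(-c\varepsilon^{-1}/\Delta)$, which is negligible for small $\Delta$.
\item Conclude with the inclusion
\[
\{\rho(\boldsymbol{x}^\varepsilon,\Phi(s))\ge\delta\}\subset\{I(\ell^\varepsilon)>s\}\cup\{\rho(\boldsymbol{x}^\varepsilon,\ell^\varepsilon)\ge\delta/2\}.
\]
\end{itemize}

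The main obstacle is the upper bound (d) in the state-dependent case: simultaneously freezing $\boldsymbol{x}$ and controlling the supremum over $\boldsymbol{\alpha}$ uniformly in $\boldsymbol{x}_0$ and $s\le s_0$. I would try first to reduce this, via the relative modulus $\Delta L$ in (c), to a finite union of homogeneous-case estimates. The factor $1+L$ in (c) is exactly what allows the error to be multiplicative in the action, giving $s(1-\Delta L(\delta'))-\Delta L(\delta')T\ge s-\gamma$.
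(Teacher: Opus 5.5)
The paper does not prove this theorem; it quotes it from Wentzell's work. Your outline reconstructs that classical scheme, and the parts on lower semicontinuity, compactness and the upper bound are sound in outline. In those parts you only need exponential estimates, and Legendre duality applied to hypothesis (c) lets you freeze $\boldsymbol{x}$ inside $H$ with an error that is multiplicative in the action. One small correction: uniformity in $\boldsymbol{x}_0$ comes from the $\boldsymbol{x}$-uniform bounds, i.e. equicontinuity of $\bigcup_{\boldsymbol{x}_0}\Phi_{\boldsymbol{x}_0}(s_0)$, not from translation, since $\mu_{\boldsymbol{x}}$ depends on $\boldsymbol{x}$.

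\textbf{The gap is the law-of-large-numbers step in the lower bound.} With the frozen tilt $\boldsymbol{\alpha}_k=\nabla_{\boldsymbol{\beta}}L(\boldsymbol{\phi}(t_k),\boldsymbol{\beta}_k)$, the one-step mean of the tilted kernel at state $\boldsymbol{x}$ is $\nabla_{\boldsymbol{\alpha}}H(\boldsymbol{x},\boldsymbol{\alpha}_k)$. This equals $\boldsymbol{\beta}_k$ only at $\boldsymbol{x}=\boldsymbol{\phi}(t_k)$. Hypothesis (c) controls its $\boldsymbol{x}$-dependence only through a modulus of continuity $\omega$, never through a Lipschitz bound. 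So a deviation of size $\delta$ produces a drift error $\omega(\delta)$, and its integral over $[0,T]$ need not stay below $\delta$. There is no Gronwall argument available. Refining the grid does not help either, because the tilt is anchored to the deterministic path, not to the current state. This is also where your opening plan of reducing to the homogeneous case by freezing coefficients breaks down: (c) lets you freeze $\boldsymbol{x}$ in exponential estimates, but not in the dynamics of the tilted chain.

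\textbf{A concrete example.} Take $d=1$ and let $\mu_x$ be Gaussian with mean $b(x)=\min(\sqrt{\vert x\vert},1)$ and variance $1/2$. Then $H=b(x)\alpha+\alpha^2/4$ and $L=(\beta-b(x))^2$. Hypothesis (a) holds with $\bar{H}(\alpha)=\vert\alpha\vert+\alpha^2/4$, (b) is clear, and $\Delta L(\delta)\le\sqrt{\delta}(1+\sqrt{\delta})$, so all hypotheses hold. Now take $x_0=0$ and $\phi\equiv 0$, so $I_{[0,T]}(\phi)=0$. Your recipe gives $\alpha_k=2(0-b(0))=0$, i.e. no tilt at all. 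The original chain is pushed by the drift $\sqrt{\vert x\vert}\ge 0$ along the escaping solution $t^2/4$ of $\dot y=\sqrt{y}$. For small $\delta$ it leaves $B_\delta(0)$ by time about $2\sqrt{\delta}<T$ with probability tending to one. Your estimate therefore yields nothing here, even though the bound itself is true.

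\textbf{The repair is a state-dependent tilt.} On the $k$-th segment of the polygon $\boldsymbol{\ell}$, use $\boldsymbol{\alpha}(\boldsymbol{x})=\nabla_{\boldsymbol{\beta}}L(\boldsymbol{x},\boldsymbol{\beta}_k)$, which is bounded by $M_R$ uniformly in $\boldsymbol{x}$; alternatively, re-anchor the tilt at the current state on a fine sub-grid.
\begin{itemize}
\item The tilted mean is then exactly $\boldsymbol{\beta}_k$ at every state, so $\boldsymbol{x}^\varepsilon-\boldsymbol{\ell}$ is, up to $O(\varepsilon)$, a martingale with variance $O(\varepsilon)$ (uniform moment bounds come from $\bar{H}$ and (b)).
\item Since $H(\boldsymbol{x},\boldsymbol{\alpha}(\boldsymbol{x}))=\boldsymbol{\alpha}(\boldsymbol{x})\cdot\boldsymbol{\beta}_k-L(\boldsymbol{x},\boldsymbol{\beta}_k)$, the log-likelihood ratio over a segment equals $\sum_n L(\boldsymbol{x}_n,\boldsymbol{\beta}_k)+\sum_n\boldsymbol{\alpha}(\boldsymbol{x}_n)\cdot(\boldsymbol{Z}_n-\boldsymbol{\beta}_k)$, where $\boldsymbol{x}_n=\boldsymbol{x}^\varepsilon(n\varepsilon)$.
\item $\varepsilon$ times the second sum tends to $0$ in probability.
\item Hypothesis (c) turns $\varepsilon\sum_n L(\boldsymbol{x}_n,\boldsymbol{\beta}_k)$ into $\int L(\boldsymbol{\ell}(t),\boldsymbol{\beta}_k)\,\mathrm{d}t$, up to a factor $1+\Delta L$ and an additive $T\,\Delta L$.
\end{itemize}
This gives conclusion (c) uniformly in $\boldsymbol{x}_0$ and in $\boldsymbol{\phi}\in\Phi_{\boldsymbol{x}_0}(s_0)$.
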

\begin{remark}\label{rem020101}
	In accordance with the preceding assumptions, it is possible to verify that: (i) $L(\boldsymbol{x},\boldsymbol{\beta})\geq{0}$; (ii) $L(\boldsymbol{x},\boldsymbol{\beta})={0}$ if and only if $\boldsymbol{\beta}=\boldsymbol{b}(\boldsymbol{x})$; (iii) $L(\boldsymbol{x},\boldsymbol{\beta})/\vert\boldsymbol{\beta}\vert\to\infty$ as $\vert\boldsymbol{\beta}\vert\to\infty$, uniformly for all $\boldsymbol{x}\in\mathbb{R}^d$ (cf. \cite[p. 231]{Venttsel_1977a}, \cite[Sections 1.1, 2.1, 3.1]{Wentzell_2012}, \cite[Sections 5.1, 5.2]{Freidlin_2012} or \cite{Rockafellar_1997}). Consequently, $I_{[0,T]}(\boldsymbol{\phi})=0$ if and only if $\dot{\boldsymbol{\phi}}(t)=\boldsymbol{b}(\boldsymbol{\phi}(t))$ for almost every $t\in[0,T]$, which implies that $\boldsymbol{\phi}(t)=\boldsymbol{x}^{0}(t)$ for all $t\in[0,T]$. By Theorem \ref{theo020101}, for any $\delta>0$, there exist constants $C>0$ and $\varepsilon_{0}>0$ such that for all $\boldsymbol{x}_{0}\in\mathbb{R}^d$ and $\varepsilon\leq\varepsilon_{0}$,
	\begin{equation*}
		P^{\varepsilon}_{\boldsymbol{x}_{0}}\left(\sup_{t\in[0,T]}\left\vert\boldsymbol{x}^{\varepsilon}(t)-\boldsymbol{x}^{0}(t)\right\vert\geq\delta\right)\leq e^{-C/\varepsilon}.
	\end{equation*} 
	Therefore, the large deviation principle can be viewed as a result that goes beyond the law of large numbers, as it not only identifies the deterministic limit $\{\boldsymbol{x}^{0}(t):t\in[0,T]\}$ to which $\{\boldsymbol{x}^\varepsilon(t):t\in[0,T]\}$ converges in probability \cite{Kurtz_1970}, but also quantifies the asymptotic probabilities of events that $\{\boldsymbol{x}^\varepsilon(t):t\in[0,T]\}$ falls in some sets of functions far away from this deterministic trajectory \cite{Freidlin_2012}. 
\end{remark}
\begin{remark}\label{rem020102}
	Assume that $\boldsymbol{b}(\boldsymbol{x}):\mathbb{R}^d\to\mathbb{R}^d$ is bounded and Lipschitz continuous. For any $\kappa\in(1,\infty)$ and $\sigma_{i}\in(0,\infty)\;(i=1,\cdots,d)$, let $\mu_{\boldsymbol{x}}$ be the $d$-dimensional generalized Gaussian measure with mean $\boldsymbol{b}(\boldsymbol{x})$, shape parameter $\kappa$, and scale parameters $(\sigma_1,\cdots,\sigma_d)$ \cite{Fabian_2009}, i.e.,
	\begin{equation*}
		\mu_{\boldsymbol{x}}(D)=\frac{\kappa^d}{(2\Gamma(1/\kappa))^d\prod_{i=1}^{d}\sigma_{i}}\int_{D}\exp\left(-\sum_{i=1}^{d}\frac{\left\vert z_i-b_i(\boldsymbol{x})\right\vert^\kappa}{\sigma_{i}^\kappa}\right)\lambda(\mathrm{d}\boldsymbol{z}),
	\end{equation*}
	where $D\subset\mathbb{R}^d$ is a Borel set, $\lambda$ denotes the Lebesgue measure, and $\Gamma$ represents the gamma function. Under this construction, the discrete dynamics in Eq. (\ref{eq010101}) can be rewritten as 
	\begin{equation*}
		\boldsymbol{x}^{\varepsilon}((n+1)\varepsilon)=\boldsymbol{x}^{\varepsilon}(n\varepsilon)+\varepsilon\boldsymbol{b}(\boldsymbol{x}^{\varepsilon}(n\varepsilon))+\varepsilon\boldsymbol{Z}_{n}^{*},
	\end{equation*}
	where $\{\boldsymbol{Z}_{n}^{*}\}$ is a sequence of independent and identically distributed $d$-dimensional random jump vectors, whose probability measure is given by
	\begin{equation*}
		\mu^{*}(D)=\frac{\kappa^d}{(2\Gamma(1/\kappa))^d\prod_{i=1}^{d}\sigma_{i}}\int_{D}\exp\left(-\sum_{i=1}^{d}\frac{\left\vert z_i\right\vert^\kappa}{\sigma_{i}^\kappa}\right)\lambda(\mathrm{d}\boldsymbol{z}).
	\end{equation*}
	The corresponding Hamiltonian and Lagrangian functions then admit the following representations:
	\begin{equation*}
		H(\boldsymbol{x},\boldsymbol{\alpha})=\boldsymbol{b}(\boldsymbol{x})\cdot\boldsymbol{\alpha}+H^{*}(\boldsymbol{\alpha}),\quad L(\boldsymbol{x},\boldsymbol{\beta})=L^{*}(\boldsymbol{\beta}-\boldsymbol{b}(\boldsymbol{x})),
	\end{equation*}
	where
	\begin{equation*}
		H^{*}(\boldsymbol{\alpha})\triangleq\ln\int_{\mathbb{R}^d}e^{\boldsymbol{\alpha}\cdot\boldsymbol{z}}\mu^{*}(\mathrm{d}\boldsymbol{z}),\quad L^{*}(\boldsymbol{\beta})\triangleq\sup_{\boldsymbol{\alpha}\in\mathbb{R}^d}[\boldsymbol{\alpha}\cdot\boldsymbol{\beta}-H^{*}(\boldsymbol{\alpha})].
	\end{equation*}
	One may verify that $H(\boldsymbol{x},\boldsymbol{\alpha})$ and $L(\boldsymbol{x},\boldsymbol{\beta})$ satisfy all the conditions of Theorem \ref{theo020101}. Hence, this defines a reasonably large class of models for which the large deviation principle holds.
	
	In particular, when $\kappa=2$, Eq. (\ref{eq010101}) becomes
	\begin{equation*}
		\boldsymbol{x}^{\varepsilon}((n+1)\varepsilon)-\boldsymbol{x}^{\varepsilon}(n\varepsilon)\overset{p}{=}\varepsilon\boldsymbol{b}(\boldsymbol{x}^{\varepsilon}(n\varepsilon))+\sqrt{\varepsilon/2}\boldsymbol{\sigma}(\boldsymbol{w}((n+1)\varepsilon)-\boldsymbol{w}(n\varepsilon)),
	\end{equation*}
	where $\overset{p}{=}$ denotes equality in finite-dimensional distributions. This corresponds precisely to the Euler discretization of the diffusion process (\ref{eq010103}) for the special case where $\boldsymbol{\sigma}=\text{diag}(\sigma_1,\cdots,\sigma_d)$. Therefore, the large deviation principle for the discrete-time Markov jump processes considered here naturally extends the corresponding principle for diffusion processes.
\end{remark}
\begin{remark}\label{rem020103}
	For simplicity, the large deviation principle for $\boldsymbol{x}^{\varepsilon}(t)$ is formulated in terms of strong assumptions analogous to those used for locally infinitely divisible processes \cite[Section 5.2]{Freidlin_2012}. These conditions can be relaxed if the finiteness of $L(\boldsymbol{x},\boldsymbol{\beta})$ for all values of the arguments or the uniformity for all initial points is not required. For a more comprehensive treatment of this topic, we refer to \cite{Venttsel_1977a,Venttsel_1977b,Venttsel_1980,Venttsel_1983,Wentzell_2012}. 
\end{remark}
\subsection{Optimal Fluctuations on Finite Time Intervals}\label{sec0202}
The following proposition follows straightforwardly from the contraction principle \cite[Chapter 3, Theorem 3.1]{Freidlin_2012} and Borovkov's characterization of the large deviation principle \cite[Chapter 3, Theorem 3.4]{Freidlin_2012}.
\begin{proposition}\label{theo020201}
	Suppose that the conditions in Theorem \ref{theo020101} hold. For each $\boldsymbol{x}_0,\,\boldsymbol{x}_T\in\mathbb{R}^d$ and $T>0$, define
	\begin{equation}\label{eq020201}
		S(\boldsymbol{x}_T,T\vert\boldsymbol{x}_0)\triangleq\inf_{\boldsymbol{\phi}(0)=\boldsymbol{x}_0,\;\boldsymbol{\phi}(T)=\boldsymbol{x}_T}I_{[0,T]}\left(\left\{\boldsymbol{\phi}(t):t\in[0,T]\right\}\right).
	\end{equation}
	Then, for any $\boldsymbol{x}_{0}\in\mathbb{R}^d$ and any Borel set $D\subset\mathbb{R}^d$ satisfying
	\begin{equation*}
		\inf_{\boldsymbol{x}_T\in\text{cl}(D)}S(\boldsymbol{x}_T,T\vert\boldsymbol{x}_0)=\inf_{\boldsymbol{x}_T\in\text{int}(D)}S(\boldsymbol{x}_T,T\vert\boldsymbol{x}_0),
	\end{equation*}
	we have
	\begin{equation}\label{eq020202}
		\lim_{\varepsilon\to{0}}\varepsilon\ln{P}^{\varepsilon}_{\boldsymbol{x}_0}\left(\boldsymbol{x}^{\varepsilon}(T)\in D\right)=-\inf_{\boldsymbol{x}_T\in{D}}S(\boldsymbol{x}_T,T\vert\boldsymbol{x}_0),
	\end{equation}
	where $\text{cl}(D)$ and $\text{int}(D)$ are the closure and interior of $D$, respectively.
\end{proposition}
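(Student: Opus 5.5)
The plan is to push the path-space large deviation principle of Theorem \ref{theo020101} forward to the endpoint $\boldsymbol{x}^{\varepsilon}(T)$ by the contraction principle, and then obtain the limit (\ref{eq020202}) from the upper and lower bounds together with the regularity hypothesis on $D$.

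\textbf{Step 1: the rate function for the endpoint.} Fix $\boldsymbol{x}_0$ and consider the evaluation map $\pi_T:\mathcal{D}([0,T];\mathbb{R}^d)\to\mathbb{R}^d$, $\pi_T(\boldsymbol{\phi})=\boldsymbol{\phi}(T)$. Since $\vert\boldsymbol{\phi}(T)-\boldsymbol{\psi}(T)\vert\le\rho_{[0,T]}(\boldsymbol{\phi},\boldsymbol{\psi})$, the map $\pi_T$ is continuous, indeed $1$-Lipschitz, for the uniform metric. By the contraction principle \cite[Chapter 3, Theorem 3.1]{Freidlin_2012}, applied to the family $\{\boldsymbol{x}^{\varepsilon},P^{\varepsilon}_{\boldsymbol{x}_0}\}$ with action functional $I_{[0,T]}$ restricted to paths with $\boldsymbol{\phi}(0)=\boldsymbol{x}_0$, the family $\boldsymbol{x}^{\varepsilon}(T)$ satisfies an LDP on $\mathbb{R}^d$. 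Its rate function is
\[
\inf\{I_{[0,T]}(\boldsymbol{\phi}):\boldsymbol{\phi}(0)=\boldsymbol{x}_0,\ \boldsymbol{\phi}(T)=\boldsymbol{x}_T\}=S(\boldsymbol{x}_T,T\vert\boldsymbol{x}_0).
\]
Concretely, I would verify the three properties directly.

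\emph{Compact level sets.} Part (b) of Theorem \ref{theo020101} says $\Phi_{\boldsymbol{x}_0}(s)$ is compact. Therefore $\{\boldsymbol{x}_T:S(\boldsymbol{x}_T,T\vert\boldsymbol{x}_0)\le s\}=\pi_T(\Phi_{\boldsymbol{x}_0}(s))$ is compact. This uses that the infimum is attained, which follows from compactness of $\Phi_{\boldsymbol{x}_0}(s)$ and the lower semicontinuity in part (a).

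\emph{Lower bound.} For $\boldsymbol{x}_T$ in an open set $G$, pick $\delta$ with $B(\boldsymbol{x}_T,\delta)\subset G$ and a near-minimizing $\boldsymbol{\phi}$. Then $\{\rho_{[0,T]}(\boldsymbol{x}^\varepsilon,\boldsymbol{\phi})<\delta\}\subset\{\boldsymbol{x}^\varepsilon(T)\in G\}$, and (\ref{eq020101}) gives
\[
\liminf_{\varepsilon\to0}\varepsilon\ln P^{\varepsilon}_{\boldsymbol{x}_0}\left(\boldsymbol{x}^{\varepsilon}(T)\in G\right)\ge-\inf_{G}S.
\]

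\emph{Upper bound.} For a closed set $F$, take $s<\inf_F S$. Then $\pi_T(\Phi_{\boldsymbol{x}_0}(s))$ is compact and disjoint from $F$, so it sits at positive distance $\delta$ from $F$. On $\{\boldsymbol{x}^\varepsilon(T)\in F\}$ the path therefore satisfies $\rho_{[0,T]}(\boldsymbol{x}^\varepsilon,\Phi_{\boldsymbol{x}_0}(s))\ge\delta$, and (\ref{eq020102}) yields
\[
\limsup_{\varepsilon\to0}\varepsilon\ln P^{\varepsilon}_{\boldsymbol{x}_0}\left(\boldsymbol{x}^{\varepsilon}(T)\in F\right)\le-s.
\]
Letting $s\uparrow\inf_F S$ gives the bound with $-\inf_F S$. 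If $\inf_F S=\infty$, one lets $s\to\infty$ instead.

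\textbf{Step 2: the limit for a regular set.} Apply the two bounds with $G=\text{int}(D)$ and $F=\text{cl}(D)$, using monotonicity $P(\text{int} D)\le P(D)\le P(\text{cl} D)$. This sandwiches $\varepsilon\ln P^{\varepsilon}_{\boldsymbol{x}_0}(\boldsymbol{x}^\varepsilon(T)\in D)$ between $-\inf_{\text{int}(D)}S$ and $-\inf_{\text{cl}(D)}S$. The hypothesis makes these two quantities equal, and both equal $\inf_D S$ since $\text{int} D\subset D\subset\text{cl} D$. This is exactly Borovkov's characterization \cite[Chapter 3, Theorem 3.4]{Freidlin_2012} for $\boldsymbol{x}^\varepsilon(T)$.

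\textbf{Main obstacle.} The delicate point is the upper-bound step. It requires the positive separation between $F$ and $\pi_T(\Phi_{\boldsymbol{x}_0}(s))$, which hinges on compactness of the level set and attainment of the infimum defining $S$. A further subtlety is the degenerate case $\inf_D S=\infty$ or $\text{int}(D)=\emptyset$, where the equality must be read in extended reals. I would handle that case separately, using that $L$ is finite everywhere so $S<\infty$ for all $\boldsymbol{x}_T$, as the straight-line path shows. Consequently the hypothesis forces $\text{int}(D)\neq\emptyset$ whenever $D\neq\emptyset$.
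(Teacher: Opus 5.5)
Your proposal is correct and follows the same route as the paper, which simply invokes the contraction principle for the continuous endpoint map $\boldsymbol{\phi}\mapsto\boldsymbol{\phi}(T)$ together with Borovkov's characterization of the large deviation principle for regular sets. You have unpacked both citations directly from parts (a)--(d) of Theorem \ref{theo020101}, including attainment of the infimum defining $S$ and its finiteness, which settle the degenerate cases.
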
 
The concept of a non-stationary optimal path is articulated as follows.
\begin{definition}\label{def020201}
	For given $\boldsymbol{x}_0, \boldsymbol{x}_T\in\mathbb{R}^d$ and $T>0$, a path $\{\boldsymbol{\phi}(t):t\in[0,T]\}$ is said to be a \textbf{non-stationary optimal path (NOP)} that connects $\boldsymbol{x}_0$ with $\boldsymbol{x}_T$ in the time span $T$ if it is a minimizer of the variational problem in Eq. (\ref{eq020201}). That is, it satisfies $\boldsymbol{\phi}(0)=\boldsymbol{x}_0$, $\boldsymbol{\phi}(T)=\boldsymbol{x}_T$, and
	\begin{equation*}
		S(\boldsymbol{x}_T,T\vert\boldsymbol{x}_0)=I_{[0,T]}\left(\left\{\boldsymbol{\phi}(t):t\in[0,T]\right\}\right)<\infty.
	\end{equation*}
	We denote such a path by $\{\boldsymbol{\phi}_{\text{NOP}}(t;\boldsymbol{x}_T,T;\boldsymbol{x}_0):t\in[0,T]\}$ when it exists.
\end{definition}
We now summarize several properties required for the subsequent analysis. Detailed proofs of these results can be found in \cite[Appendix A]{Zhao_2025}.
\begin{proposition}\label{theo020202}
	Suppose that the conditions in Theorem \ref{theo020101} hold. Then:
	\begin{itemize}
		\item[(a)] $S(\boldsymbol{x}_T,T\vert\boldsymbol{x}_0)<\infty$ for all $\boldsymbol{x}_0, \boldsymbol{x}_T\in\mathbb{R}^d$ and $T>0$; That is, there exists at least one (possibly not unique) NOP that connects $\boldsymbol{x}_0$ with $\boldsymbol{x}_T$ in the time span $T$;
		\item[(b)] For fixed $T>0$, $S(\boldsymbol{x}_T,T\vert\boldsymbol{x}_0)\to\infty$ as $\vert\boldsymbol{x}_T-\boldsymbol{x}_0\vert\to\infty$; For any $\boldsymbol{x}_0,\boldsymbol{x}_T\in\mathbb{R}^d$ satisfying $\boldsymbol{x}_0\neq\boldsymbol{x}_T$, $S(\boldsymbol{x}_T,T\vert\boldsymbol{x}_0)\to\infty$ as $T\to{0}$;
		\item[(c)] If $\{\boldsymbol{\phi}(t):t\in[0,T]\}$ is an (respectively, the unique) NOP that connects $\boldsymbol{x}_0$ with $\boldsymbol{x}_T$ in the time span $T$, then for any $t_1,t_2\in[0,T]$ with $t_1<t_2$, the subpath $\{\boldsymbol{\phi}(t+t_1):t\in[0,t_2-t_1]\}$ is also an (respectively, the unique) NOP that connects $\boldsymbol{\phi}(t_1)$ with $\boldsymbol{\phi}(t_2)$ in the time span $t_2-t_1$.
	\end{itemize}
	If, in addition, $H(\boldsymbol{x},\boldsymbol{\alpha})$ is $\mathcal{C}^{k+1}$ (i.e., a function possessing continuous partial derivatives up to order $k+1$ inclusive) for all values of the arguments with some integer $k\geq 1$, then:
	\begin{itemize}
		\item[(d)] For each NOP $\{\boldsymbol{\phi}_{\text{NOP}}(t;\boldsymbol{x}_T,T;\boldsymbol{x}_0):t\in[0,T]\}$, the functions
		\begin{equation*}
			\left\{\begin{aligned}
				&\boldsymbol{x}(t)\triangleq\boldsymbol{\phi}_{\text{NOP}}(t;\boldsymbol{x}_T,T;\boldsymbol{x}_0),\\
				&\boldsymbol{\alpha}(t)\triangleq\nabla_{\boldsymbol{\beta}}L(\boldsymbol{\phi}_{\text{NOP}}(t;\boldsymbol{x}_T,T;\boldsymbol{x}_0),\dot{\boldsymbol{\phi}}_{\text{NOP}}(t;\boldsymbol{x}_T,T;\boldsymbol{x}_0)),
			\end{aligned}\right.
		\end{equation*}
		are $\mathcal{C}^{k+1}$ and satisfy the following Hamilton's system of equations
		\begin{equation}\label{eq020203}
			\left\{\begin{aligned}
				&\dot{\boldsymbol{x}}(t)=\nabla_{\boldsymbol{\alpha}}H(\boldsymbol{x}(t),\boldsymbol{\alpha}(t)),\\
				&\dot{\boldsymbol{\alpha}}(t)=-\nabla_{\boldsymbol{x}}H(\boldsymbol{x}(t),\boldsymbol{\alpha}(t)),
			\end{aligned}\right.			
		\end{equation}
		with initial condition $\boldsymbol{x}(0)=\boldsymbol{x}_0$ and terminal condition $\boldsymbol{x}(T)=\boldsymbol{x}_T$. Consequently, for each $t\in[0,T]$,
		\begin{equation}\label{eq020204}
			S(\boldsymbol{x}(t),t\vert\boldsymbol{x}_0)=\int_{0}^{t}\left[\dot{\boldsymbol{x}}(u)\cdot\boldsymbol{\alpha}(u)-H(\boldsymbol{x}(u),\boldsymbol{\alpha}(u))\right]\mathrm{d}u,
		\end{equation}
		and
		\begin{equation}\label{eq020205}
			S(\boldsymbol{x}_T,T-t\vert\boldsymbol{x}(t))=\int_{t}^{T}\left[\dot{\boldsymbol{x}}(u)\cdot\boldsymbol{\alpha}(u)-H(\boldsymbol{x}(u),\boldsymbol{\alpha}(u))\right]\mathrm{d}u.
		\end{equation}
		\item[(e)] Suppose that the NOP $\{\boldsymbol{\phi}_{\text{NOP}}(t;\boldsymbol{x}_T,T;\boldsymbol{x}_0):t\in[0,T]\}$ is a proper subarc of another NOP, i.e., there exist a constant $\gamma_0>0$ and an NOP $\{\bar{\boldsymbol{\phi}}(t):t\in[0,T+2\gamma_0]\}$ such that $\boldsymbol{\phi}_{\text{NOP}}(t;\boldsymbol{x}_T,T;\boldsymbol{x}_0)=\bar{\boldsymbol{\phi}}(t+\gamma_0)$ for all $t\in[0,T]$. Let $B_{\delta_0}(\boldsymbol{x})$ denote the open $\delta_0$-neighborhood of $\boldsymbol{x}$ in $\mathbb{R}^d$. For any $t_1,t_2\in[0,T]$ with $t_1<t_2$, define
		\begin{equation*}
			\Omega_{\delta_0,[t_1,t_2]}\triangleq\left\{(\boldsymbol{z},t)\in\mathbb{R}^d\times[t_1,t_2]:\boldsymbol{z}\in{B}_{\delta_0}(\boldsymbol{\phi}_{\text{NOP}}(t;\boldsymbol{x}_T,T;\boldsymbol{x}_0))\right\}.
		\end{equation*}
		Then, for each $T^*\in(0,T)$, there exists a constant $\delta_0$ such that $S(\boldsymbol{x},t\vert\boldsymbol{x}_0)$ and $S(\boldsymbol{x}_T,T-t\vert\boldsymbol{x})$ are $\mathcal{C}^{k+1}$ with respect to $(\boldsymbol{x},t)$ on the sets $\Omega_{\delta_0,[T-T^*,T]}$ and $\Omega_{\delta_0,[0,T^*]}$ respectively. In this case, they satisfy the Hamilton-Jacobi equations
		\begin{equation}\label{eq020206}
			\frac{\partial}{\partial{t}}S(\boldsymbol{x},t\vert\boldsymbol{x}_0)+H(\boldsymbol{x},\nabla_{\boldsymbol{x}}S(\boldsymbol{x},t\vert\boldsymbol{x}_0))=0,
		\end{equation} 
		and 
		\begin{equation}\label{eq020207}
			\frac{\partial}{\partial{t}}S(\boldsymbol{x}_T,T-t\vert\boldsymbol{x})-H(\boldsymbol{x},-\nabla_{\boldsymbol{x}}{S}(\boldsymbol{x}_T,T-t\vert\boldsymbol{x}))=0,
		\end{equation}
		in the classical sense at each point $(\boldsymbol{x},t)$ in $\Omega_{\delta_0,[T-T^*,T]}$ and $\Omega_{\delta_0,[0,T^*]}$, respectively. The NOP is the unique solution of
		\begin{equation}\label{eq020208}
			\dot{\boldsymbol{x}}(t)=\nabla_{\boldsymbol{\alpha}}H(\boldsymbol{x}(t),\nabla_{\boldsymbol{x}}S(\boldsymbol{x}(t),t\vert\boldsymbol{x}_0)),\;\;t\in[T-T^*,T],
		\end{equation}
		with terminal condition $\boldsymbol{x}(T)=\boldsymbol{x}_T$, or equivalently
		\begin{equation}\label{eq020209}
			\dot{\boldsymbol{x}}(t)=\nabla_{\boldsymbol{\alpha}}H(\boldsymbol{x}(t),-\nabla_{\boldsymbol{x}}S(\boldsymbol{x}_T,T-t\vert\boldsymbol{x}(t))),\;\;t\in[0,T^*],
		\end{equation}
		with initial condition $\boldsymbol{x}(0)=\boldsymbol{x}_0$. Moreover, for any $(\boldsymbol{x},t)\in\Omega_{\delta_0,[T-T^*,T]}$ and sufficiently small $\delta>0$,
		\begin{equation}\label{eq020210}
			\lim_{\varepsilon\to{0}}\varepsilon\ln{P^{\varepsilon}_{\boldsymbol{x}_0}\left(\boldsymbol{x}^{\varepsilon}(t)\in{B}_{\delta}(\boldsymbol{x})\right)}=-\inf_{\boldsymbol{y}\in{B}_{\delta}(\boldsymbol{x})}S(\boldsymbol{y},t\vert\boldsymbol{x}_0).
		\end{equation}
		Similarly, for any $(\boldsymbol{x},t)\in\Omega_{\delta_0,[0,T^*]}$ and sufficiently small $\delta>0$,
		\begin{equation}\label{eq020211}
			\lim_{\varepsilon\to{0}}\varepsilon\ln{P^{\varepsilon}_{\boldsymbol{x}_0}\left(\boldsymbol{x}^{\varepsilon}(T)\in{B}_{\delta}(\boldsymbol{x}_T)\vert\boldsymbol{x}^{\varepsilon}(t)=\boldsymbol{x}\right)}=-\inf_{\boldsymbol{y}\in{B}_{\delta}(\boldsymbol{x}_T)}S(\boldsymbol{y},T-t\vert\boldsymbol{x}).
		\end{equation}
	\end{itemize}
\end{proposition}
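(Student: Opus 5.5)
The plan is to prove the parts in the order stated, using the growth and convexity properties of $L$ from Remark \ref{rem020101} together with the compactness and lower semicontinuity in Theorem \ref{theo020101}.

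For (a), finiteness of $S$ comes from an explicit competitor. Take the straight line $\boldsymbol{\phi}(t)=\boldsymbol{x}_0+t(\boldsymbol{x}_T-\boldsymbol{x}_0)/T$. Its velocity has norm $R=\vert\boldsymbol{x}_T-\boldsymbol{x}_0\vert/T$, so assumption (b) gives $I_{[0,T]}(\boldsymbol{\phi})\leq TM_R<\infty$. For existence of a minimizer, note that the admissible set $\{\boldsymbol{\phi}(0)=\boldsymbol{x}_0,\boldsymbol{\phi}(T)=\boldsymbol{x}_T\}$ is closed in $\rho_{[0,T]}$. Intersecting it with the compact sublevel set $\Phi_{\boldsymbol{x}_0}(S+1)$ gives a compact set. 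Lower semicontinuity of $I_{[0,T]}$ then yields a minimizer on that set, which is an NOP.

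For (b), the key input is superlinearity (iii): for every $K$ there is $C_K$ with $L(\boldsymbol{x},\boldsymbol{\beta})\geq K\vert\boldsymbol{\beta}\vert-C_K$ uniformly in $\boldsymbol{x}$. Integrating along any admissible path gives
\[
I_{[0,T]}(\boldsymbol{\phi})\geq K\vert\boldsymbol{x}_T-\boldsymbol{x}_0\vert-C_KT .
\]
Letting $\vert\boldsymbol{x}_T-\boldsymbol{x}_0\vert\to\infty$, or letting $T\to0$ with $K$ fixed and then $K\to\infty$, gives both divergence statements.

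For (c), use additivity of the integral. If a subpath on $[t_1,t_2]$ could be strictly improved, splicing the better piece into $\boldsymbol{\phi}$ would lower $I_{[0,T]}$ while keeping the endpoints, contradicting minimality. The same splicing argument, applied to a second minimizer of the subproblem, shows that uniqueness of the subpath follows from uniqueness of the whole NOP.

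For (d), I would first establish Lipschitz regularity of the minimizer. Since $L\geq K\vert\boldsymbol{\beta}\vert-C_K$, the velocity $\dot{\boldsymbol{\phi}}$ is integrable. A Tonelli/Clarke-type argument, using the uniform convexity $m_R$ and the modulus of continuity in (c), then shows $\dot{\boldsymbol{\phi}}$ is essentially bounded. With that, the Du Bois-Reymond form of the Euler--Lagrange equation holds. Put $\boldsymbol{\alpha}=\nabla_{\boldsymbol{\beta}}L$. Because $H$ is $\mathcal{C}^{k+1}$ with positive definite Hessian, the Legendre map is a $\mathcal{C}^{k}$ diffeomorphism, and Euler--Lagrange becomes the Hamilton system (\ref{eq020203}). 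Bootstrapping regularity through that system gives $\mathcal{C}^{k+1}$. Formulas (\ref{eq020204})--(\ref{eq020205}) then follow from (c), since the subpaths are NOPs, combined with the Legendre identity $L(\boldsymbol{x},\dot{\boldsymbol{x}})=\boldsymbol{\alpha}\cdot\dot{\boldsymbol{x}}-H(\boldsymbol{x},\boldsymbol{\alpha})$.

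For (e), I would use the method of characteristics. The extension to $[-\gamma_0,T+\gamma_0]$ means the NOP contains no conjugate points on $[0,T]$. Concretely, the map $\boldsymbol{\alpha}(0)\mapsto\boldsymbol{x}(t)$ is a local diffeomorphism for $t\in[T-T^*,T]$, which is a Jacobi-field nondegeneracy statement. The inverse function theorem then produces a $\mathcal{C}^{k+1}$ field of extremals covering a tube $\Omega_{\delta_0}$. Standard field theory (Hilbert's invariant integral) shows that $S$ restricted to the tube equals the action along the local extremal and satisfies (\ref{eq020206}), with $\nabla_{\boldsymbol{x}}S=\boldsymbol{\alpha}$. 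This in turn gives the feedback ODEs (\ref{eq020208})--(\ref{eq020209}), whose uniqueness follows from Lipschitz continuity of the vector field. The backward quantity $S(\boldsymbol{x}_T,T-t\vert\boldsymbol{x})$ is handled symmetrically and yields (\ref{eq020207}).

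The limits (\ref{eq020210})--(\ref{eq020211}) follow from Proposition \ref{theo020201} applied to $D=B_\delta(\boldsymbol{x})$. Its hypothesis, that the infimum over the closure equals the infimum over the interior, holds because $S$ is continuous near $\boldsymbol{x}$ on the tube. For (\ref{eq020211}), combine time-homogeneity with uniformity of the LDP in the initial point.

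I expect the main obstacle to be the absence of conjugate points and the resulting local $\mathcal{C}^{k+1}$ smoothness of $S$. My first attempt would be a proof by contradiction: if a conjugate point existed in $(0,T]$, the Jacobi accessory argument would allow a strict improvement of the extended path $\bar{\boldsymbol{\phi}}$ on $[0,T+2\gamma_0]$, contradicting its optimality.
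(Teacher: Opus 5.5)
\textbf{The gap is in part (e).} Parts (a)--(c) are fine. Part (d) is fine in outline, with one correction: the Lipschitz bound on $\dot{\boldsymbol{\phi}}$ comes from $L$ being autonomous, not from $m_R$ and $\Delta L$. Time reparametrization gives conservation of the energy $H(\boldsymbol{x},\nabla_{\boldsymbol{\beta}}L(\boldsymbol{x},\dot{\boldsymbol{x}}))$, which is coercive in $\dot{\boldsymbol{x}}$ uniformly in $\boldsymbol{x}$ by assumption (b) of Theorem \ref{theo020101} and superlinearity, as in Clarke--Vinter.

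The step that fails is your claim that Hilbert's invariant integral shows $S$ coincides on the tube with the action along the field extremals. Non-conjugacy and the inverse function theorem do give a central field of extremals from $(\boldsymbol{x}_0,0)$, and a smooth local action $S_{\mathrm{loc}}$ with $\partial_t S_{\mathrm{loc}}+H(\boldsymbol{x},\nabla_{\boldsymbol{x}}S_{\mathrm{loc}})=0$. The Weierstrass--Hilbert argument, however, only shows that competitors \emph{staying in the region covered by the field} have action at least $S_{\mathrm{loc}}$. Since field extremals are admissible, you get $S\le S_{\mathrm{loc}}$. But $S$ is an infimum over all paths, and paths that leave the tube are not controlled.

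Non-conjugacy is a property of the single extremal $\boldsymbol{\phi}$ and is compatible with a distant second minimizer. This is exactly the situation in the paper's closing remark on coexisting NOPs. There each NOP carries its own smooth local field, yet $\nabla_{\boldsymbol{x}}S$ jumps at $(\boldsymbol{x}_T,T)$ because $\boldsymbol{\alpha}^{(1)}(T)\neq\boldsymbol{\alpha}^{(2)}(T)$. Your plan uses the proper-subarc hypothesis only to exclude conjugate points, so it would equally ``prove'' smoothness there whenever neither NOP is conjugate at $T$. Hence (\ref{eq020206})--(\ref{eq020209}) are not established. In particular you are missing the identity $\nabla_{\boldsymbol{x}}S(\boldsymbol{\phi}(t),t\vert\boldsymbol{x}_0)=\boldsymbol{\alpha}(t)$, which is what makes the NOP solve (\ref{eq020208}).

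\textbf{How to close it.} You need the second role of the proper-subarc hypothesis, namely uniqueness, combined with a compactness argument.

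First, for each $s\in(0,T]$, the arc $\boldsymbol{\phi}|_{[0,s]}$ is the unique minimizer from $\boldsymbol{x}_0$ to $\boldsymbol{\phi}(s)$ in time $s$. Splicing another minimizer into $\bar{\boldsymbol{\phi}}$ gives a minimizer on $[0,T+2\gamma_0]$. By (d) it is a $\mathcal{C}^1$ solution of (\ref{eq020203}) that agrees with $\bar{\boldsymbol{\phi}}$ on $[0,\gamma_0]$, so ODE uniqueness forces equality.

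Second, take $(\boldsymbol{y}_n,s_n)\to(\boldsymbol{\phi}(\bar s),\bar s)$ with $\bar s\in[T-T^*,T]$, and let $\boldsymbol{\psi}_n$ be minimizers from (a).
\begin{itemize}
\item Since $S(\boldsymbol{y}_n,s_n\vert\boldsymbol{x}_0)\le S_{\mathrm{loc}}(\boldsymbol{y}_n,s_n)\to S(\boldsymbol{\phi}(\bar s),\bar s\vert\boldsymbol{x}_0)$, compactness of sublevel sets and lower semicontinuity (after rescaling time) show every limit point of $\boldsymbol{\psi}_n$ is a minimizer. By uniqueness it equals $\boldsymbol{\phi}|_{[0,\bar s]}$.
\item Bounded action plus energy conservation bounds the initial momenta. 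Continuous dependence on initial data then gives $\boldsymbol{\alpha}_n(0)\to\boldsymbol{\alpha}(0)$.
\item Only now does non-conjugacy do its job. Local injectivity of $\boldsymbol{\alpha}_0\mapsto\boldsymbol{x}(s;\boldsymbol{x}_0,\boldsymbol{\alpha}_0)$ near $(\boldsymbol{\alpha}(0),\bar s)$ identifies $\boldsymbol{\psi}_n$ with the field extremal for large $n$. So $S=S_{\mathrm{loc}}$ near $(\boldsymbol{\phi}(\bar s),\bar s)$, and compactness of $[T-T^*,T]$ gives a uniform $\delta_0$.
\end{itemize}

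The symmetric argument is needed for $S(\boldsymbol{x}_T,T-t\vert\boldsymbol{x})$, with non-conjugacy now supplied by the left extension of $\bar{\boldsymbol{\phi}}$. With these steps added, your derivations of (\ref{eq020208})--(\ref{eq020211}) go through.
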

\begin{remark}\label{rem020201}
	The function $S(\boldsymbol{x},t\vert\boldsymbol{y})$ is discontinuous at $t=0$, as $S(\boldsymbol{x},0\vert\boldsymbol{y})=0$ if $\boldsymbol{x}=\boldsymbol{y}$ and $S(\boldsymbol{x},0\vert\boldsymbol{y})=\infty$ otherwise. The parameter $T^{*}$ is introduced to circumvent this discontinuity.
	
	If the assumptions in part (e) are satisfied, then the NOP $\{\boldsymbol{\phi}_{\text{NOP}}(t;\boldsymbol{x}_T,T;\boldsymbol{x}_0):t\in[0,T]\}$ is the unique path connecting $\boldsymbol{x}_0$ with $\boldsymbol{x}_T$ in the time span $T$. The converse, however, is not true in general. Hence, the conditions in part (e) are slightly stronger than those required solely for the uniqueness of the NOP.
	
	Part (e) of this proposition provides a sufficient condition under which the segments $\{\boldsymbol{\phi}_{\text{NOP}}(t;\boldsymbol{x}_T,T;\boldsymbol{x}_0):t\in[T-T^*,T]\}$ and $\{\boldsymbol{\phi}_{\text{NOP}}(t;\boldsymbol{x}_T,T;\boldsymbol{x}_0):t\in[0,T^*]\}$ can be characterized as the unique solutions of Eqs. (\ref{eq020208}) and (\ref{eq020209}), respectively. This characterization is a prerequisite for the limit theorems established in Sec. \ref{sec04}.
\end{remark}

The following lemma is used in Sec. \ref{sec04}. Its proof is given in \ref{secA1}.
\begin{lemma}\label{theo020203}
	Suppose that the conditions in Theorem \ref{theo020101} and Proposition \ref{theo020202}(e) hold. For each $T^{*}\in(0,T)$ and a sufficiently small $\delta_{0}>0$, define 
	\begin{equation*}
		\Sigma_{\delta_{0},[T-T^{*},T]}\triangleq\bigcup_{t\in[T-T^{*},T]}B_{\delta_{0}}(\boldsymbol{\phi}_{\text{NOP}}(t;\boldsymbol{x}_T,T;\boldsymbol{x}_0)).
	\end{equation*}
	Assume that for any $T^*\in(0,T)$, there exists a constant $\delta_0>0$, so that for all sufficiently small $\delta$ and all $t\in[T-T^{*},T]$, the prefactor
	\begin{equation*}
		k^{\varepsilon,\delta}(\boldsymbol{x},t\vert\boldsymbol{x}_0,0)\triangleq{P}^\varepsilon_{\boldsymbol{x}_0}\left(\boldsymbol{x}^{\varepsilon}(t)\in{B}_{\delta}(\boldsymbol{x})\right)\exp\left(\varepsilon^{-1}\inf_{\boldsymbol{y}\in{B}_{\delta}(\boldsymbol{x})}S(\boldsymbol{y},t\vert\boldsymbol{x}_0)\right),
	\end{equation*}
	is continuous on $\Sigma_{\delta_{0},[T-T^{*},T]}$, and that there exists a continuous function $f^{\varepsilon,\delta}$ for which the limit
	\begin{equation*}
		\lim_{\varepsilon\to{0},\delta\to{0}}\frac{k^{\varepsilon,\delta}(\boldsymbol{x},t\vert\boldsymbol{x}_0,0)}{f^{\varepsilon,\delta}}=K(\boldsymbol{x},t\vert\boldsymbol{x}_0),
	\end{equation*}
	exists, uniformly for $(\boldsymbol{x},t)\in\Sigma_{\delta_{0},[T-T^{*},T]}\times[T-T^{*},T]$. We further assume that both $K(\boldsymbol{x},t\vert\boldsymbol{x}_0)$ and $S(\boldsymbol{x},t\vert\boldsymbol{x}_0)$ are at least twice continuously differentiable on $\Sigma_{\delta_{0},[T-T^{*},T]}\times[T-T^{*},T]$ , and that $K(\boldsymbol{x},t\vert\boldsymbol{x}_0)>0$. Then 
	\begin{equation}\label{eq020212}
		\frac{{P}^\varepsilon_{\boldsymbol{x}_0}\left(\boldsymbol{x}^{\varepsilon}(t-\varepsilon)\in{B}_{{\delta}}(\boldsymbol{x}+\varepsilon\boldsymbol{z})\right)}{{P}^\varepsilon_{\boldsymbol{x}_0}\left(\boldsymbol{x}^{\varepsilon}(t)\in{B}_{{\delta}}(\boldsymbol{x})\right)}\to\frac{\exp\left(-\boldsymbol{z}\cdot\nabla_{\boldsymbol{x}}S(\boldsymbol{x},t\vert\boldsymbol{x}_0)\right)}{\exp\left(-\frac{\partial}{\partial{t}}S(\boldsymbol{x},t\vert\boldsymbol{x}_0)\right)},
	\end{equation}
	as $\varepsilon\to{0}$ and $\varepsilon^{-1}\delta^2\to{0}$, uniformly for $(\boldsymbol{x},t)\in\Sigma_{\delta_{0},[T-T^{*},T]}\times[T-T^{*},T]$ and $\boldsymbol{z}$ in any compact subset of $\mathbb{R}^d$.
	
	Moreover, if for each $t\in[T-T^{*},T]$, the probability $P_{\boldsymbol{x}_0}^\varepsilon(\boldsymbol{x}^{\varepsilon}(t)\in{\cdot})$ admits a continuous density function $p^{\varepsilon}(\boldsymbol{x},t\vert\boldsymbol{x}_0,0)$ such that $p^{\varepsilon}(\boldsymbol{x},t\vert\boldsymbol{x}_0,0)>0$ and
	\begin{equation*}
		\lim_{\delta\to{0}}\frac{P_{\boldsymbol{x}_0}^\varepsilon(\boldsymbol{x}^{\varepsilon}(t)\in{B}_{\delta}(\boldsymbol{x}))}{\lambda({B}_{\delta}(\boldsymbol{x}))}=p^{\varepsilon}(\boldsymbol{x},t\vert\boldsymbol{x}_0,0),
	\end{equation*}
	uniformly for $(\boldsymbol{x},t)\in\Sigma_{\delta_{0},[T-T^{*},T]}\times[T-T^{*},T]$, then
	\begin{equation}\label{eq020213}
		\lim_{\varepsilon\to{0}}\frac{p^{\varepsilon}(\boldsymbol{x}+\varepsilon\boldsymbol{z},t-\varepsilon\vert\boldsymbol{x}_0,0)}{p^{\varepsilon}(\boldsymbol{x},t\vert\boldsymbol{x}_0,0)}=\frac{\exp\left(-\boldsymbol{z}\cdot\nabla_{\boldsymbol{x}}S(\boldsymbol{x},t\vert\boldsymbol{x}_0)\right)}{\exp\left(-\frac{\partial}{\partial{t}}S(\boldsymbol{x},t\vert\boldsymbol{x}_0)\right)},
	\end{equation}
	uniformly for $(\boldsymbol{x},t)\in\Sigma_{\delta_{0},[T-T^{*},T]}\times[T-T^{*},T]$ and $\boldsymbol{z}$ in any compact subset of $\mathbb{R}^d$.
\end{lemma}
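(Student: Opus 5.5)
We write each probability as prefactor times exponential, using the definition of $k^{\varepsilon,\delta}$:
\begin{equation*}
P^\varepsilon_{\boldsymbol{x}_0}\left(\boldsymbol{x}^{\varepsilon}(t)\in B_{\delta}(\boldsymbol{x})\right)=k^{\varepsilon,\delta}(\boldsymbol{x},t\vert\boldsymbol{x}_0,0)\exp\left(-\varepsilon^{-1}\inf_{\boldsymbol{y}\in B_{\delta}(\boldsymbol{x})}S(\boldsymbol{y},t\vert\boldsymbol{x}_0)\right).
\end{equation*}
The same identity holds at the shifted point $(\boldsymbol{x}+\varepsilon\boldsymbol{z},t-\varepsilon)$. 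For $\varepsilon$ small this point still lies in $\Sigma_{\delta_0,[T-T^*,T]}\times[T-T^*,T]$, after enlarging $T^*$ or shrinking $\delta_0$ slightly, which is allowed since $T^*$ is arbitrary. The ratio in (\ref{eq020212}) therefore factors into a prefactor ratio and an exponential of the difference of the two infima divided by $\varepsilon$. We treat the two factors separately.

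\textbf{Prefactor ratio.} We divide numerator and denominator by the same normalization $f^{\varepsilon,\delta}$. By the assumed uniform convergence to $K$, each quotient is within $o(1)$ of $K$ at its respective point. Since $K>0$ is continuous on a compact set, it is bounded below. Its continuity, together with $(\boldsymbol{x}+\varepsilon\boldsymbol{z},t-\varepsilon)\to(\boldsymbol{x},t)$ uniformly, then gives a prefactor ratio tending to $1$ uniformly.

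\textbf{Exponential factor.} This is the main step, and it is where the scaling $\varepsilon^{-1}\delta^2\to 0$ enters. Since $S$ is $\mathcal{C}^2$, Taylor expansion around the centre gives
\begin{equation*}
\inf_{\boldsymbol{y}\in B_\delta(\boldsymbol{x})}S(\boldsymbol{y},t\vert\boldsymbol{x}_0)=S(\boldsymbol{x},t\vert\boldsymbol{x}_0)-\delta\vert\nabla_{\boldsymbol{x}}S(\boldsymbol{x},t\vert\boldsymbol{x}_0)\vert+O(\delta^2),
\end{equation*}
with the $O(\delta^2)$ uniform over the compact region. We apply the same expansion at $(\boldsymbol{x}+\varepsilon\boldsymbol{z},t-\varepsilon)$ and subtract. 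Expanding $S$ to first order gives
\begin{equation*}
S(\boldsymbol{x}+\varepsilon\boldsymbol{z},t-\varepsilon)-S(\boldsymbol{x},t)=\varepsilon\boldsymbol{z}\cdot\nabla_{\boldsymbol{x}}S-\varepsilon\partial_t S+O(\varepsilon^2).
\end{equation*}
The gradient-norm terms differ by $\delta\cdot O(\varepsilon)$, because $\nabla_{\boldsymbol{x}}S$ is $\mathcal{C}^1$. The remainders contribute $O(\delta^2)$.

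Dividing by $\varepsilon$, the error terms are $O(\varepsilon)+O(\delta)+O(\delta^2/\varepsilon)$, and all three vanish in the stated regime. Hence the exponential factor converges to $\exp\left(-\boldsymbol{z}\cdot\nabla_{\boldsymbol{x}}S+\partial_t S\right)$, which is the right-hand side of (\ref{eq020212}). Every constant comes from sup-norms of the first and second derivatives of $S$ on the compact set, so the convergence is uniform in $(\boldsymbol{x},t)$ and in $\boldsymbol{z}$ in compacts. The delicate point is being careful that the infimum over a ball is controlled to second order rather than only to first order. We can bypass the exact form of the minimizer by bounding directly: $\vert\inf_{B_\delta}S-S(\text{centre})\vert\le C\delta$, and the difference of the two infima at nearby centres is the difference of $S$ at the centres plus $O(\delta\varepsilon+\delta^2)$.

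\textbf{Density statement.} For fixed $\varepsilon$, we divide both numerator and denominator of (\ref{eq020212}) by $\lambda(B_\delta)$; the Lebesgue measure of the ball is the same at both centres. Letting $\delta\to0$ first, uniformly by assumption, the left side becomes the density ratio in (\ref{eq020213}). To justify taking the $\delta\to0$ limit before $\varepsilon\to0$, we use the uniformity of (\ref{eq020212}) in the joint regime. For every $\eta>0$ there are $\varepsilon_1$ and $c$ such that the ratio lies within $\eta$ of the limit whenever $\varepsilon<\varepsilon_1$ and $\delta^2<c\varepsilon$. Fixing $\varepsilon<\varepsilon_1$ and sending $\delta\to0$ stays inside this regime. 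Since $p^\varepsilon>0$, the density ratio is therefore within $\eta$ of the limit, which proves (\ref{eq020213}) uniformly.
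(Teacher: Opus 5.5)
Your proposal is correct and follows essentially the same route as the paper. You factor each probability as the prefactor $k^{\varepsilon,\delta}$ times an exponential, show that the prefactor ratio tends to $1$ via the uniform convergence to $K$, and expand $\inf_{B_\delta}S = S-\delta\vert\nabla_{\boldsymbol{x}}S\vert+O(\delta^2)$ at both centres, which leaves errors of order $O(\varepsilon)+O(\delta)+O(\delta^2/\varepsilon)$; you then send $\delta\to0$ for the density version. Your explicit handling of the lower bound on $K$, of keeping the shifted point $(\boldsymbol{x}+\varepsilon\boldsymbol{z},t-\varepsilon)$ inside the domain, and of the order of limits in the density step fills in details the paper leaves implicit.
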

Similarly, the following result can be obtained in essentially the same way as Lemma \ref{theo020203}, and its proof is therefore omitted.
\begin{lemma}\label{theo020204}
	 Assume that the conditions in Theorem \ref{theo020101} and Proposition \ref{theo020202}(e) hold. For each $T^{*}\in(0,T)$ and a sufficiently small $\delta_{0}>0$, define 
	\begin{equation*}
		\Sigma_{\delta_{0},[0,T^*]}\triangleq\bigcup_{t\in[0,T^*]}B_{\delta_{0}}(\boldsymbol{\phi}_{\text{NOP}}(t;\boldsymbol{x}_T,T;\boldsymbol{x}_0)).
	\end{equation*}
	Assume that for any $T^*\in(0,T)$, there exists a constant $\delta_0>0$, so that for all sufficiently small $\delta$ and all $t\in[0,T^*]$, the prefactor
	\begin{equation*}
		\begin{aligned}
			k^{\varepsilon,\delta}(\boldsymbol{x}_T,T\vert\boldsymbol{x},t)\triangleq{P}^\varepsilon_{\boldsymbol{x}_0}(\boldsymbol{x}^{\varepsilon}(T)&\in{B}_{\delta}(\boldsymbol{x}_T)\vert\boldsymbol{x}^{\varepsilon}(t)=\boldsymbol{x})\\
			&\times\exp\left(\varepsilon^{-1}\inf_{\boldsymbol{y}\in{B}_{\delta}(\boldsymbol{x}_T)}S(\boldsymbol{y},T-t\vert\boldsymbol{x})\right),
		\end{aligned}
	\end{equation*}
	is continuous on $\Sigma_{\delta_{0},[0,T^*]}$, and that there exists a continuous function $f^{\varepsilon,\delta}$ for which the limit
	\begin{equation*}
		\lim_{\varepsilon\to{0},\delta\to{0}}\frac{k^{\varepsilon,\delta}(\boldsymbol{x}_T,T\vert\boldsymbol{x},t)}{f^{\varepsilon,\delta}}=K(\boldsymbol{x}_T,T-t\vert\boldsymbol{x}),
	\end{equation*}
	exists, uniformly for $(\boldsymbol{x},t)\in\Sigma_{\delta_{0},[0,T^*]}\times[0,T^*]$. We further assume that both $K(\boldsymbol{x}_T,T-t\vert\boldsymbol{x})$ and $S(\boldsymbol{x}_T,T-t\vert\boldsymbol{x})$ are at least twice continuously differentiable on $\Sigma_{\delta_{0},[0,T^*]}\times[0,T^*]$, and that $K(\boldsymbol{x}_T,T-t\vert\boldsymbol{x})>0$. Then
		\begin{equation}\label{eq020214}
		\frac{{P}^\varepsilon_{\boldsymbol{x}_0}\left(\boldsymbol{x}^{\varepsilon}(T)\in{B}_{{\delta}}(\boldsymbol{x}_T)\vert\boldsymbol{x}^{\varepsilon}(t+\varepsilon)=\boldsymbol{x}+\varepsilon\boldsymbol{z}\right)}{{P}^\varepsilon_{\boldsymbol{x}_0}\left(\boldsymbol{x}^{\varepsilon}(T)\in{B}_{{\delta}}(\boldsymbol{x}_T)\vert\boldsymbol{x}^{\varepsilon}(t)=\boldsymbol{x}\right)}\to\frac{\exp\left(-\boldsymbol{z}\cdot\nabla_{\boldsymbol{x}}S(\boldsymbol{x}_T,T-t\vert\boldsymbol{x})\right)}{\exp\left(\frac{\partial}{\partial{t}}S(\boldsymbol{x}_T,T-t\vert\boldsymbol{x})\right)},
	\end{equation}
	as $\varepsilon\to{0}$ and $\varepsilon^{-1}\delta^2\to{0}$, uniformly for $(\boldsymbol{x},t)\in\Sigma_{\delta_{0},[0,T^*]}\times[0,T^*]$ and $\boldsymbol{z}$ in any compact subset of $\mathbb{R}^d$. 
	
	Moreover, if for each $t\in[0,T^*]$, the conditional probability $P_{\boldsymbol{x}_0}^\varepsilon(\boldsymbol{x}^{\varepsilon}(T)\in{\cdot}\vert\boldsymbol{x}^{\varepsilon}(t)=\boldsymbol{x})$ admits a continuous density function $p^{\varepsilon}(\boldsymbol{x}_T,T\vert\boldsymbol{x},t)$ such that $p^{\varepsilon}(\boldsymbol{x}_T,T\vert\boldsymbol{x},t)>0$, and
	\begin{equation*}
		\lim_{\delta\to{0}}\frac{P_{\boldsymbol{x}_0}^\varepsilon(\boldsymbol{x}^{\varepsilon}(T)\in{B}_{\delta}(\boldsymbol{x}_T)\vert\boldsymbol{x}^{\varepsilon}(t)=\boldsymbol{x})}{\lambda({B}_{\delta}(\boldsymbol{x}_T))}=p^{\varepsilon}(\boldsymbol{x}_T,T\vert\boldsymbol{x},t),
	\end{equation*}
	uniformly for $(\boldsymbol{x},t)\in\Sigma_{\delta_{0},[0,T^*]}\times[0,T^*]$, then
	\begin{equation}\label{eq020215}
		\lim_{\varepsilon\to{0}}\frac{p^{\varepsilon}(\boldsymbol{x}_T,T\vert\boldsymbol{x}+\varepsilon\boldsymbol{z},t+\varepsilon)}{p^{\varepsilon}(\boldsymbol{x}_T,T\vert\boldsymbol{x},t)}=\frac{\exp\left(-\boldsymbol{z}\cdot\nabla_{\boldsymbol{x}}S(\boldsymbol{x}_T,T-t\vert\boldsymbol{x})\right)}{\exp\left(\frac{\partial}{\partial{t}}S(\boldsymbol{x}_T,T-t\vert\boldsymbol{x})\right)},
	\end{equation}
	uniformly for $(\boldsymbol{x},t)\in\Sigma_{\delta_{0},[0,T^*]}\times[0,T^*]$ and $\boldsymbol{z}$ in any compact subset of $\mathbb{R}^d$.
\end{lemma}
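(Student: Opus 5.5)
The plan is to mirror the proof of Lemma \ref{theo020203}: write each conditional probability as prefactor times exponential, and expand the exponents to first order. By definition of the prefactor,
\begin{equation*}
P^\varepsilon_{\boldsymbol{x}_0}\left(\boldsymbol{x}^{\varepsilon}(T)\in B_{\delta}(\boldsymbol{x}_T)\mid\boldsymbol{x}^{\varepsilon}(t)=\boldsymbol{x}\right)=k^{\varepsilon,\delta}(\boldsymbol{x}_T,T\mid\boldsymbol{x},t)\exp\left(-\varepsilon^{-1}\inf_{\boldsymbol{y}\in B_{\delta}(\boldsymbol{x}_T)}S(\boldsymbol{y},T-t\mid\boldsymbol{x})\right),
\end{equation*}
and similarly with $(\boldsymbol{x},t)$ replaced by $(\boldsymbol{x}+\varepsilon\boldsymbol{z},t+\varepsilon)$. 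The ratio in (\ref{eq020214}) therefore splits into a prefactor ratio and an exponential of $-\varepsilon^{-1}$ times a difference of infima.

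For the prefactor ratio, I divide numerator and denominator by $f^{\varepsilon,\delta}$. This common factor does not depend on $(\boldsymbol{x},t)$, and dividing by it is exactly what makes the normalization useful. The uniform convergence to $K(\boldsymbol{x}_T,T-t\mid\cdot)$ then gives ratios tending to $K(\boldsymbol{x}_T,T-t-\varepsilon\mid\boldsymbol{x}+\varepsilon\boldsymbol{z})/K(\boldsymbol{x}_T,T-t\mid\boldsymbol{x})$. Since $K$ is $\mathcal{C}^2$ and strictly positive on the compact set $\Sigma_{\delta_0,[0,T^*]}\times[0,T^*]$, it is uniformly continuous and bounded below. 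Hence this ratio tends to $1$ uniformly for $\boldsymbol{z}$ in compacts. For small $\varepsilon$ the point $(\boldsymbol{x}+\varepsilon\boldsymbol{z},t+\varepsilon)$ stays inside the region, after shrinking $\delta_0$ and $T^*$ slightly and using the margin in Proposition \ref{theo020202}(e).

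For the exponent, write $G(\boldsymbol{y},\boldsymbol{x},t)=S(\boldsymbol{y},T-t\mid\boldsymbol{x})$. The main obstacle is controlling the infimum over $\boldsymbol{y}\in B_\delta(\boldsymbol{x}_T)$: the infimum may be attained at different points for the two base points. I plan to sandwich it using the two-sided bound $|\inf_{B_\delta}g_1-\inf_{B_\delta}g_2|\le\sup_{B_\delta}|g_1-g_2|$, with $g_1=G(\cdot,\boldsymbol{x}+\varepsilon\boldsymbol{z},t+\varepsilon)$ and $g_2=G(\cdot,\boldsymbol{x},t)$. The goal is to show
\begin{equation*}
\sup_{\boldsymbol{y}\in B_\delta(\boldsymbol{x}_T)}\left|G(\boldsymbol{y},\boldsymbol{x}+\varepsilon\boldsymbol{z},t+\varepsilon)-G(\boldsymbol{y},\boldsymbol{x},t)-\varepsilon\left[\boldsymbol{z}\cdot\nabla_{\boldsymbol{x}}G(\boldsymbol{x}_T,\boldsymbol{x},t)+\partial_tG(\boldsymbol{x}_T,\boldsymbol{x},t)\right]\right|=o(\varepsilon).
\end{equation*}
A Taylor expansion in $(\boldsymbol{x},t)$ gives a remainder $O(\varepsilon^2)$, by the $\mathcal{C}^2$ regularity. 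Replacing the derivatives at $\boldsymbol{y}$ by those at $\boldsymbol{x}_T$ costs $\varepsilon\,O(\delta)$, provided $G$ is $\mathcal{C}^2$ jointly in $(\boldsymbol{y},\boldsymbol{x},t)$ near $(\boldsymbol{x}_T,\boldsymbol{x},t)$. If only regularity in $(\boldsymbol{x},t)$ at $\boldsymbol{y}=\boldsymbol{x}_T$ is available, I expect this is where the hypothesis $\varepsilon^{-1}\delta^2\to0$ is used. In that case one compares each infimum with its value at $\boldsymbol{y}=\boldsymbol{x}_T$ via a second-order expansion in $\boldsymbol{y}$, whose first-order terms cancel between the two base points to order $\varepsilon\delta$ and whose quadratic terms contribute $O(\delta^2)=o(\varepsilon)$. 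Dividing by $\varepsilon$ and exponentiating then yields $\exp(-\boldsymbol{z}\cdot\nabla_{\boldsymbol{x}}S-\partial_tS)$, which is the right-hand side of (\ref{eq020214}). All estimates are uniform by compactness.

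For (\ref{eq020215}), I divide numerator and denominator of (\ref{eq020214}) by $\lambda(B_\delta(\boldsymbol{x}_T))$, which is the same for both. Letting $\delta\to0$ first at fixed $\varepsilon$ via the assumed uniform density limit gives $p^\varepsilon(\boldsymbol{x}_T,T\mid\boldsymbol{x}+\varepsilon\boldsymbol{z},t+\varepsilon)/p^\varepsilon(\boldsymbol{x}_T,T\mid\boldsymbol{x},t)$. Positivity of $p^\varepsilon$ keeps this ratio well defined. Finally, I interchange the limits using uniformity: $\delta\to0$ at fixed $\varepsilon$ automatically lies in the regime $\varepsilon^{-1}\delta^2\to0$, so the limit in (\ref{eq020214}) applies along this diagonal and (\ref{eq020215}) follows.
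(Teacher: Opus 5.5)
Your skeleton matches what the paper intends when it says this lemma follows "in essentially the same way as Lemma~\ref{theo020203}": the prefactor ratio is handled through the common normalisation $f^{\varepsilon,\delta}$ together with uniform continuity and positivity of $K$, the exponent is expanded to first order, and for the densities you let $\delta\to0$ at fixed $\varepsilon$. Where you differ is in the treatment of the infimum.

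Transplanted to this lemma, the paper's argument expands each infimum about the centre of its ball:
$\inf_{\boldsymbol{y}\in B_\delta(\boldsymbol{x}_T)}S(\boldsymbol{y},T-t\mid\boldsymbol{x})=S(\boldsymbol{x}_T,T-t\mid\boldsymbol{x})-\delta\vert\nabla_{\boldsymbol{y}}S(\boldsymbol{x}_T,T-t\mid\boldsymbol{x})\vert+O(\delta^2)$.
It then notes that the $\delta\vert\nabla_{\boldsymbol{y}}S\vert$ terms at the two base points differ by $O(\varepsilon\delta)$, and uses $\varepsilon^{-1}\delta^2\to0$ to discard the quadratic remainders. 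This requires $\mathcal{C}^2$ regularity in the ball variable $\boldsymbol{y}$ and Lipschitz dependence of $\nabla_{\boldsymbol{y}}S$ on $(\boldsymbol{x},t)$.

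Your route never expands in $\boldsymbol{y}$. You use $\vert\inf g_1-\inf g_2-c\vert\le\sup\vert g_1-g_2-c\vert$ with a Taylor expansion in $(\boldsymbol{x},t)$ that is uniform over $\boldsymbol{y}\in B_\delta(\boldsymbol{x}_T)$. This only needs $S(\boldsymbol{y},T-t\mid\boldsymbol{x})$ to be $\mathcal{C}^1$ in $(\boldsymbol{x},t)$, with derivatives jointly continuous in $(\boldsymbol{y},\boldsymbol{x},t)$ near $\boldsymbol{y}=\boldsymbol{x}_T$. After dividing by $\varepsilon$ the error is $\omega(\delta)+o(1)$, where $\omega$ is a modulus of continuity in $\boldsymbol{y}$. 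So the limit holds as $\varepsilon,\delta\to0$ in any manner, and the restriction $\varepsilon^{-1}\delta^2\to0$ becomes unnecessary. This suits the present lemma better: unlike in Lemma~\ref{theo020203}, the ball variable $\boldsymbol{y}$ and the differentiation variables $(\boldsymbol{x},t)$ are different.

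Drop your fallback sentence. A second-order expansion in $\boldsymbol{y}$ is exactly the paper's route and presupposes regularity in $\boldsymbol{y}$. It is not available if you only know that $S(\boldsymbol{x}_T,T-t\mid\boldsymbol{x})$ is $\mathcal{C}^2$ in $(\boldsymbol{x},t)$ for the single terminal point $\boldsymbol{x}_T$. Under that literal hypothesis neither argument controls $\inf_{\boldsymbol{y}\in B_\delta(\boldsymbol{x}_T)}S(\boldsymbol{y},T-t\mid\boldsymbol{x})$, and the paper's omitted proof uses the extra regularity silently.

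So state joint regularity of $S(\boldsymbol{y},T-t\mid\boldsymbol{x})$ near $\boldsymbol{y}=\boldsymbol{x}_T$ as an explicit assumption. It is what one expects near a non-conjugate NOP as in Proposition~\ref{theo020202}(e), but the lemma does not assume it. With that assumption added, your main argument goes through.
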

\begin{remark}\label{rem020202}
	Note that for each $0<\varepsilon\ll{1}$, the identity $\boldsymbol{x}^{\varepsilon}(t)\equiv\boldsymbol{x}^{\varepsilon}(\lfloor{t/\varepsilon}\rfloor\varepsilon)$ holds for all $t\in[0,T]$. Consequently, for fixed $\boldsymbol{x}$ and $\boldsymbol{x}_{s}$, the quantity $p^{\varepsilon}(\boldsymbol{x},t\vert\boldsymbol{x}_{s},s)$ depends only on $(\lfloor{t/\varepsilon}\rfloor-\lfloor{s/\varepsilon}\rfloor)\varepsilon$, rather than on $t-s$. This explains the use of $k^{\varepsilon,\delta}(\boldsymbol{x},t\vert\boldsymbol{x}_{s},s)$ and $p^{\varepsilon}(\boldsymbol{x},t\vert\boldsymbol{x}_{s},s)$ in Lemmas \ref{theo020203} and \ref{theo020204}, instead of the forms $k^{\varepsilon,\delta}(\boldsymbol{x},t-s\vert\boldsymbol{x}_{s})$ and $p^{\varepsilon}(\boldsymbol{x},t-s\vert\boldsymbol{x}_{s})$. Furthermore, since $(\lfloor{t/\varepsilon}\rfloor-\lfloor{s/\varepsilon}\rfloor)\varepsilon\to t-s$ as $\varepsilon\to{0}$, the appearance of  $K(\boldsymbol{x},t-s\vert\boldsymbol{x}_s)$ and $S(\boldsymbol{x},t-s\vert\boldsymbol{x}_s)$ in these lemmas is justified.
	
	In Lemmas \ref{theo020203} and \ref{theo020204}, we assume that $S(\boldsymbol{x},t\vert\boldsymbol{x}_0)$ and $S(\boldsymbol{x}_T,T-t\vert\boldsymbol{x})$ are at least twice continuously differentiable on $\Sigma_{\delta_{0},[T-T^{*},T]}\times[T-T^*,T]$ and $\Sigma_{\delta_{0},[0,T^*]}\times[0,T^*]$, respectively. These assumptions are stronger than those in Proposition \ref{theo020202}(e), since $\Omega_{\delta_{0},[T-T^*,T]}\subset\Sigma_{\delta_{0},[T-T^{*},T]}\times[T-T^*,T]$ and $\Omega_{\delta_{0},[0,T^*]}\subset\Sigma_{\delta_{0},[0,T^*]}\times[0,T^*]$. The time intervals are restricted to $[T-T^*,T]$ and $[0,T^*]$ because $p^{\varepsilon}(\boldsymbol{x},t\vert\boldsymbol{x}_0)$ and $p^{\varepsilon}(\boldsymbol{x}_T,T\vert\boldsymbol{x},t)$ are singular at $t=0$ and $t=T$, respectively. The spatial domains are confined to $\Sigma_{\delta_{0},[T-T^{*},T]}$ and $\Sigma_{\delta_{0},[0,T^*]}$ as they constitute the fundamental conditions required for the limit theorems in Sec. \ref{sec04}.
\end{remark}


\section{Time Reversal of Discrete-time Markov Jump Processes}\label{Sec03}
For each $t\in[0,T]$, assume that $\boldsymbol{x}^\varepsilon(t)$ possesses a probability density $p^\varepsilon(\boldsymbol{x},t)$. The time evolution of $p^\varepsilon(\boldsymbol{x},t)$ is described by
\begin{equation}\label{eq030001}
	\left\{\begin{aligned}
		&p^\varepsilon(\boldsymbol{x},t)=p^\varepsilon(\boldsymbol{x},n\varepsilon), &&t\in[n\varepsilon,(n+1)\varepsilon),\\
		&p^\varepsilon(\boldsymbol{x},t)=p^\varepsilon(\boldsymbol{x},\lfloor{T/\varepsilon}\rfloor\varepsilon), &&t\in[\lfloor{T/\varepsilon}\rfloor\varepsilon,T],
	\end{aligned}\right.
\end{equation}
and the Kolmogorov forward equation:
\begin{equation}\label{eq030002}
	p^\varepsilon(\boldsymbol{x},(n+1)\varepsilon)-p^\varepsilon(\boldsymbol{x},n\varepsilon)=\mathscr{J}_{\boldsymbol{x}}^{\varepsilon,*}p^\varepsilon(\boldsymbol{x},n\varepsilon),
\end{equation}
for $n=0,1,\cdots,\lfloor{T/\varepsilon}\rfloor-1$, where the operator $\mathscr{J}_{\boldsymbol{x}}^{\varepsilon,*}$ is defined for each bounded continuous function $f(\boldsymbol{x})$ by
\begin{equation*}
	\mathscr{J}_{\boldsymbol{x}}^{\varepsilon,*}f(\boldsymbol{x})\triangleq\int_{\mathbb{R}^d}\left[f(\boldsymbol{x}-\varepsilon\boldsymbol{z})\mu_{\boldsymbol{x}-\varepsilon\boldsymbol{z}}(\mathrm{d}\boldsymbol{z})-f(\boldsymbol{x})\mu_{\boldsymbol{x}}(\mathrm{d}\boldsymbol{z})\right].
\end{equation*}
In particular, the conditional probability density $p^\varepsilon(\boldsymbol{x},t\vert\boldsymbol{x}_s,s)$ ($0\leq{s}\leq{t}\leq{T}$) satisfies
\begin{equation}\label{eq030003}
	\left\{\begin{aligned}
		&p^\varepsilon(\boldsymbol{x},t\vert\boldsymbol{x}_s,s)=\delta(\boldsymbol{x}-\boldsymbol{x}_s), && t\in[s,(\lfloor{s/\varepsilon}\rfloor+1)\varepsilon),\\
		&p^\varepsilon(\boldsymbol{x},t\vert\boldsymbol{x}_s,s)=p^\varepsilon(\boldsymbol{x},n\varepsilon\vert\boldsymbol{x}_s,s), && t\in[n\varepsilon,(n+1)\varepsilon),\\
		&p^\varepsilon(\boldsymbol{x},t\vert\boldsymbol{x}_s,s)=p^\varepsilon(\boldsymbol{x},\lfloor{T/\varepsilon}\rfloor\varepsilon\vert\boldsymbol{x}_s,s), && t\in[\lfloor{T/\varepsilon}\rfloor\varepsilon,T],
	\end{aligned}\right.
\end{equation}
and
\begin{equation}\label{eq030004}
	p^\varepsilon(\boldsymbol{x},(n+1)\varepsilon\vert\boldsymbol{x}_s,s)-p^\varepsilon(\boldsymbol{x},n\varepsilon\vert\boldsymbol{x}_s,s)=\mathscr{J}_{\boldsymbol{x}}^{\varepsilon,*}p^\varepsilon(\boldsymbol{x},n\varepsilon\vert\boldsymbol{x}_s,s),
\end{equation}
for $n=\lfloor{s/\varepsilon}\rfloor+1,\cdots,\lfloor{T/\varepsilon}\rfloor-1$. Moreover, it also satisfies
\begin{equation}\label{eq030005}
	\left\{\begin{aligned}
		&p^\varepsilon(\boldsymbol{x},t\vert\boldsymbol{x}_s,s)=p^\varepsilon(\boldsymbol{x},t\vert\boldsymbol{x}_s,n\varepsilon), && s\in[n\varepsilon,(n+1)\varepsilon),\\
		&p^\varepsilon(\boldsymbol{x},t\vert\boldsymbol{x}_s,s)=\delta(\boldsymbol{x}_s-\boldsymbol{x}), && s\in[\lfloor{t/\varepsilon}\rfloor\varepsilon,t],\\
	\end{aligned}\right.
\end{equation}
and the Kolmogorov backward equation
\begin{equation}\label{eq030006}
	p^\varepsilon(\boldsymbol{x},t\vert\boldsymbol{x}_s,n\varepsilon)-p^\varepsilon(\boldsymbol{x},t\vert\boldsymbol{x}_s,(n+1)\varepsilon)=\mathscr{J}_{\boldsymbol{x}_s}^\varepsilon{p}^\varepsilon(\boldsymbol{x},t\vert\boldsymbol{x}_s,(n+1)\varepsilon),
\end{equation}
for $n=\lfloor{t/\varepsilon}\rfloor-1,\cdots,1,0$, where $\mathscr{J}^\varepsilon_{\boldsymbol{x}}$ is the adjoint operator of $\mathscr{J}_{\boldsymbol{x}}^{\varepsilon,*}$ and is given by
\begin{equation*}
	\mathscr{J}^\varepsilon_{\boldsymbol{x}}f(\boldsymbol{x})\triangleq\int_{\mathbb{R}^d}\left[f(\boldsymbol{x}+\varepsilon\boldsymbol{z})-f(\boldsymbol{x})\right]\mu_{\boldsymbol{x}}(\mathrm{d}\boldsymbol{z}).
\end{equation*}
\subsection{Time Reversal of a Given Family of Probability Densities}\label{sec0301}
For a given family of densities $\{p^\varepsilon(\boldsymbol{x},t)\}_{t\in[0,T]}$ satisfying equations (\ref{eq030001}) and (\ref{eq030002}), we call $\{\bar{p}^\varepsilon(\boldsymbol{x},t)\triangleq{p}^\varepsilon(\boldsymbol{x},T-t)\}_{t\in[0,T]}$ the time reversal of $\{p^\varepsilon(\boldsymbol{x},t)\}_{t\in[0,T]}$. The following proposition is straightforward to verify. 
\begin{proposition}\label{theo030101}
	The time evolution of $\bar{p}^\varepsilon(\boldsymbol{x},t)$ is described by
	\begin{equation}\label{eq030101}
		\left\{\begin{aligned}
			&\bar{p}^\varepsilon(\boldsymbol{x},t)=\bar{p}^\varepsilon(\boldsymbol{x},\Delta), && t\in[0,\Delta],\\
			&\bar{p}^\varepsilon(\boldsymbol{x},t)=\bar{p}^\varepsilon(\boldsymbol{x},\Delta+(m+1)\varepsilon), && t\in(\Delta+m\varepsilon,\Delta+(m+1)\varepsilon],
		\end{aligned}\right.
	\end{equation}
	and the Kolmogorov forward equation
	\begin{equation}\label{eq030102}
		\bar{p}^\varepsilon(\boldsymbol{x},\Delta+(m+1)\varepsilon)-\bar{p}^\varepsilon(\boldsymbol{x},\Delta+m\varepsilon)=\mathscr{K}^{\varepsilon,*}_{\boldsymbol{x},\Delta+m\varepsilon}\bar{p}^\varepsilon(\boldsymbol{x},\Delta+m\varepsilon),
	\end{equation}
	where $\Delta=T-\lfloor{T/\varepsilon}\rfloor\varepsilon$, $m=0,1,\cdots,\lfloor{T/\varepsilon}\rfloor-1$, and the operator $\mathscr{K}^{\varepsilon,*}_{\boldsymbol{x},t}$ is defined for each $t=\Delta+m\varepsilon\in[0,T-\varepsilon]$ and bounded continuous function $f(\boldsymbol{x},t)$ by
	\begin{equation*}
		\mathscr{K}^{\varepsilon,*}_{\boldsymbol{x},t}f(\boldsymbol{x},t)\triangleq\int_{\mathbb{R}^d}\left[f(\boldsymbol{x}-\varepsilon\boldsymbol{z},t)\bar{\mu}^\varepsilon_{\boldsymbol{x}-\varepsilon\boldsymbol{z},t}(\mathrm{d}\boldsymbol{z})-f(\boldsymbol{x},t)\bar{\mu}^\varepsilon_{\boldsymbol{x},t}(\mathrm{d}\boldsymbol{z})\right],
	\end{equation*}
	with
	\begin{equation}\label{eq030103}
		\bar{\mu}^\varepsilon_{\boldsymbol{x},t}(\mathrm{d}\boldsymbol{z})\triangleq\frac{p^\varepsilon(\boldsymbol{x}+\varepsilon\boldsymbol{z},T-t-\varepsilon)\mu_{\boldsymbol{x}+\varepsilon\boldsymbol{z}}(-\mathrm{d}\boldsymbol{z})}{p^\varepsilon(\boldsymbol{x},T-t)}.
	\end{equation}
\end{proposition}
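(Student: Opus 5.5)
The verification is a direct translation of the piecewise-constant structure and the forward equation (\ref{eq030002}) under the substitution $t\mapsto T-t$. I will carry it out in three steps.

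\emph{Step 1: the piecewise-constant structure (\ref{eq030101}).} Write $N=\lfloor T/\varepsilon\rfloor$ and $\Delta=T-N\varepsilon\in[0,\varepsilon)$.
For $t\in[0,\Delta]$ we have $T-t\in[N\varepsilon,T]$. By the second line of (\ref{eq030001}), $\bar p^\varepsilon(\boldsymbol{x},t)=p^\varepsilon(\boldsymbol{x},N\varepsilon)=\bar p^\varepsilon(\boldsymbol{x},\Delta)$.
For $t\in(\Delta+m\varepsilon,\Delta+(m+1)\varepsilon]$ we have $T-t\in[(N-m-1)\varepsilon,(N-m)\varepsilon)$. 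The first line of (\ref{eq030001}) then gives $\bar p^\varepsilon(\boldsymbol{x},t)=p^\varepsilon(\boldsymbol{x},(N-m-1)\varepsilon)=\bar p^\varepsilon(\boldsymbol{x},\Delta+(m+1)\varepsilon)$.
Note that the interval endpoints flip from right-continuity to left-continuity under reversal, which is why the half-open intervals in (\ref{eq030101}) are open on the left.

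\emph{Step 2: reindex the forward equation.} Set $n=N-m-1$, so that $\Delta+m\varepsilon=T-(n+1)\varepsilon$ and $\Delta+(m+1)\varepsilon=T-n\varepsilon$. The left side of (\ref{eq030102}) then equals $p^\varepsilon(\boldsymbol{x},n\varepsilon)-p^\varepsilon(\boldsymbol{x},(n+1)\varepsilon)$, which by (\ref{eq030002}) is $-\mathscr{J}^{\varepsilon,*}_{\boldsymbol{x}}p^\varepsilon(\boldsymbol{x},n\varepsilon)$. It remains to show that this coincides with $\mathscr{K}^{\varepsilon,*}_{\boldsymbol{x},t}\bar p^\varepsilon(\boldsymbol{x},t)$ at $t=\Delta+m\varepsilon$, where $\bar p^\varepsilon(\boldsymbol{x},t)=p^\varepsilon(\boldsymbol{x},(n+1)\varepsilon)$ and $T-t-\varepsilon=n\varepsilon$.

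\emph{Step 3: compute the two integrals in $\mathscr{K}^{\varepsilon,*}$ using (\ref{eq030103}).} The loss term is
\[
\bar p^\varepsilon(\boldsymbol{x},t)\,\bar\mu^\varepsilon_{\boldsymbol{x},t}(\mathbb{R}^d)=\int p^\varepsilon(\boldsymbol{x}+\varepsilon\boldsymbol{z},n\varepsilon)\,\mu_{\boldsymbol{x}+\varepsilon\boldsymbol{z}}(-\mathrm{d}\boldsymbol{z}).
\]
After the substitution $\boldsymbol{z}\to-\boldsymbol{z}$ this becomes $\int p^\varepsilon(\boldsymbol{x}-\varepsilon\boldsymbol{z},n\varepsilon)\mu_{\boldsymbol{x}-\varepsilon\boldsymbol{z}}(\mathrm{d}\boldsymbol{z})$, the gain term of $\mathscr{J}^{\varepsilon,*}$.

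For the gain term, the factor $\bar p^\varepsilon(\boldsymbol{x}-\varepsilon\boldsymbol{z},t)$ cancels the denominator of $\bar\mu^\varepsilon_{\boldsymbol{x}-\varepsilon\boldsymbol{z},t}$. What remains is $\int p^\varepsilon(\boldsymbol{x},n\varepsilon)\mu_{\boldsymbol{x}}(-\mathrm{d}\boldsymbol{z})=p^\varepsilon(\boldsymbol{x},n\varepsilon)$, since $\mu_{\boldsymbol{x}}$ is a probability measure.

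Hence $\mathscr{K}^{\varepsilon,*}_{\boldsymbol{x},t}\bar p^\varepsilon=p^\varepsilon(\boldsymbol{x},n\varepsilon)-\int p^\varepsilon(\boldsymbol{x}-\varepsilon\boldsymbol{z},n\varepsilon)\mu_{\boldsymbol{x}-\varepsilon\boldsymbol{z}}(\mathrm{d}\boldsymbol{z})=-\mathscr{J}^{\varepsilon,*}_{\boldsymbol{x}}p^\varepsilon(\boldsymbol{x},n\varepsilon)$, which completes Step 2.

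As a byproduct, the same computation with (\ref{eq030002}) shows that $\bar\mu^\varepsilon_{\boldsymbol{x},t}$ has total mass $p^\varepsilon(\boldsymbol{x},T-t)/p^\varepsilon(\boldsymbol{x},T-t)=1$. Indeed, $\int p^\varepsilon(\boldsymbol{x}-\varepsilon\boldsymbol{z},n\varepsilon)\mu_{\boldsymbol{x}-\varepsilon\boldsymbol{z}}(\mathrm{d}\boldsymbol{z})=p^\varepsilon(\boldsymbol{x},(n+1)\varepsilon)$, so $\bar\mu^\varepsilon_{\boldsymbol{x},t}$ is a genuine transition probability. This justifies calling (\ref{eq030102}) a Kolmogorov forward equation.

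The only delicate points are bookkeeping. One is the index shift around $\Delta$. The other is the meaning of $\mu(-\mathrm{d}\boldsymbol{z})$ as the pushforward under $\boldsymbol{z}\mapsto-\boldsymbol{z}$, which must be handled consistently in the change of variables. One should also assume $p^\varepsilon>0$, so that the denominators in (\ref{eq030103}) are well defined.
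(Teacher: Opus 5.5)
Your verification is correct and is the direct check the paper has in mind; the paper only says the proposition is ``straightforward to verify'' and gives no further argument. Your steps are: reindex via $n=\lfloor T/\varepsilon\rfloor-m-1$, cancel $\bar p^\varepsilon$ against the denominator of $\bar\mu^\varepsilon$ in the gain term, and reflect $\boldsymbol{z}\mapsto-\boldsymbol{z}$ in the loss term to recover $-\mathscr{J}^{\varepsilon,*}_{\boldsymbol{x}}p^\varepsilon(\boldsymbol{x},n\varepsilon)$. Your positivity assumption on $p^\varepsilon$ is not needed under the paper's $0/0=0$ convention. Wherever $p^\varepsilon(\boldsymbol{y},T-t)=0$, the numerator measure in (\ref{eq030103}) has total mass $p^\varepsilon(\boldsymbol{y},T-t)=0$ by (\ref{eq030002}), so $\bar p^\varepsilon(\boldsymbol{y},t)\bar\mu^\varepsilon_{\boldsymbol{y},t}(\mathrm{d}\boldsymbol{z})=p^\varepsilon(\boldsymbol{y}+\varepsilon\boldsymbol{z},T-t-\varepsilon)\mu_{\boldsymbol{y}+\varepsilon\boldsymbol{z}}(-\mathrm{d}\boldsymbol{z})$ still holds. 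This matters because the later NPPD application uses $p^\varepsilon(\boldsymbol{x},0)=\delta(\boldsymbol{x}-\boldsymbol{x}_0)$, where positivity fails.
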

\begin{remark}\label{rem030101}
	Here, the convention ${0}/{0}=0$ is adopted to ensure that the probability measure $\bar{\mu}^\varepsilon_{\boldsymbol{x},t}$ is well-defined. The notation $\mu_{\boldsymbol{x}}(-\mathrm{d}\boldsymbol{z})$ denotes the push-forward probability measure under the map $\boldsymbol{z}\to-\boldsymbol{z}$, i.e., $\int_D\mu_{\boldsymbol{x}}(-\mathrm{d}\boldsymbol{z})=\mu_{\boldsymbol{x}}(-D)$ for any Borel set $D\subset\mathbb{R}^d$.
\end{remark}

Let $\{\bar{\boldsymbol{x}}^\varepsilon(t):t\in[0,T]\}$ be a left-continuous stochastic step function depending on the density family $\{p^\varepsilon(\boldsymbol{x},t)\}_{t\in[0,T]}$, which is defined by 
\begin{equation}\label{eq030104}
	\left\{\begin{aligned}
		&\bar{\boldsymbol{x}}^\varepsilon(t)=\bar{\boldsymbol{x}}^\varepsilon(\Delta), &&t\in[0,\Delta],\\
		&\bar{\boldsymbol{x}}^\varepsilon(t)=\bar{\boldsymbol{x}}^\varepsilon(\Delta)+\varepsilon\sum_{m=0}^{\lceil{(t-\Delta)/\varepsilon}\rceil-1}\bar{\boldsymbol{Z}}^\varepsilon_{m}, &&t\in(\Delta,T].
	\end{aligned}\right.
\end{equation}
where $\{\Bar{\boldsymbol{Z}}^\varepsilon_{m}\}$ is a jump process whose conditional jump probability at time $m$ is non-stationary (i.e., dependent on the variable $m$), independent of $\Bar{\boldsymbol{Z}}^\varepsilon_{k}$ for $k<m$, dependent only on the current state of $\bar{\boldsymbol{x}}^\varepsilon(\Delta+m\varepsilon)$ (i.e., independent of previous states of $\bar{\boldsymbol{x}}^\varepsilon(\Delta+k\varepsilon)$ for $k<m$), and is given by $\bar{\mu}^\varepsilon_{\boldsymbol{x},\Delta+m\varepsilon}$. Specifically, for any Borel set $D\subset\mathbb{R}^d$ and $m=0,1,\cdots,\lfloor{T/\varepsilon}\rfloor-1$,
\begin{equation*}
	\begin{aligned}
			P^\varepsilon\left(\bar{\boldsymbol{Z}}^\varepsilon_{m}\in{D}\vert\bar{\boldsymbol{x}}^\varepsilon(\Delta+m\varepsilon)=\boldsymbol{x},\bar{\mathcal{F}}_{<m}\right)=\bar{\mu}^\varepsilon_{\boldsymbol{x},\Delta+m\varepsilon}(D),
		\end{aligned}
\end{equation*}
where $\bar{\mathcal{F}}_{<m}$ denotes the history of $\bar{\boldsymbol{x}}^\varepsilon(\Delta+k\varepsilon)$ before $m$. Comparing the Kolmogorov forward equation for $\bar{\boldsymbol{x}}^\varepsilon(t)$ with that obtained in Proposition \ref{theo030101}, we deduce: 
\begin{corollary}\label{theo030102}
	If $\bar{\boldsymbol{x}}^\varepsilon(0)$ has initial density $\bar{p}^\varepsilon(\boldsymbol{x},0)$, then $\bar{p}^\varepsilon(\boldsymbol{x},t)$ is the probability density of the process $\bar{\boldsymbol{x}}^\varepsilon(t)$ for every $t\in[0,T]$.
\end{corollary}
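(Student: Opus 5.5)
The argument is an induction on the discrete time index $m$. I will show that the law of $\bar{\boldsymbol{x}}^\varepsilon$ and the family $\bar{p}^\varepsilon$ obey the same one-step recursion (\ref{eq030102}) and start from the same initial datum. Uniqueness of solutions of a one-step recursion then forces them to agree.

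\textbf{Step 1: piecewise constancy.} Both objects are constant on the same time intervals. By (\ref{eq030104}), $\bar{\boldsymbol{x}}^\varepsilon(t)=\bar{\boldsymbol{x}}^\varepsilon(\Delta)$ for $t\in[0,\Delta]$. For $t\in(\Delta+m\varepsilon,\Delta+(m+1)\varepsilon]$ we have $\lceil (t-\Delta)/\varepsilon\rceil=m+1$, so $\bar{\boldsymbol{x}}^\varepsilon(t)=\bar{\boldsymbol{x}}^\varepsilon(\Delta+(m+1)\varepsilon)$. This is exactly the interval structure in (\ref{eq030101}). Hence it suffices to prove that the density $q^\varepsilon(\cdot,m)$ of $\bar{\boldsymbol{x}}^\varepsilon(\Delta+m\varepsilon)$ equals $\bar{p}^\varepsilon(\cdot,\Delta+m\varepsilon)$ for $m=0,\dots,\lfloor T/\varepsilon\rfloor$. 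The base case $m=0$ holds because $\bar{\boldsymbol{x}}^\varepsilon(\Delta)=\bar{\boldsymbol{x}}^\varepsilon(0)$ has density $\bar{p}^\varepsilon(\cdot,0)=\bar{p}^\varepsilon(\cdot,\Delta)$, the last equality by (\ref{eq030101}).

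\textbf{Step 2: forward equation for the reversed chain.} I will derive the Kolmogorov forward equation for $\bar{\boldsymbol{x}}^\varepsilon$. Fix a bounded continuous test function $g$. Use $\bar{\boldsymbol{x}}^\varepsilon(\Delta+(m+1)\varepsilon)=\bar{\boldsymbol{x}}^\varepsilon(\Delta+m\varepsilon)+\varepsilon\bar{\boldsymbol{Z}}^\varepsilon_m$ and condition on the current state, which is legitimate by the Markov property of $\bar{\boldsymbol{Z}}^\varepsilon_m$. This gives
\[
E\,g(\bar{\boldsymbol{x}}^\varepsilon(\Delta+(m+1)\varepsilon))=\int\!\!\int g(\boldsymbol{y}+\varepsilon\boldsymbol{z})\,\bar{\mu}^\varepsilon_{\boldsymbol{y},\Delta+m\varepsilon}(\mathrm{d}\boldsymbol{z})\,q^\varepsilon(\boldsymbol{y},m)\,\mathrm{d}\boldsymbol{y}.
\]
Substituting $\boldsymbol{y}=\boldsymbol{x}-\varepsilon\boldsymbol{z}$ (Fubini, unit Jacobian) identifies the density of the new state as $\int q^\varepsilon(\boldsymbol{x}-\varepsilon\boldsymbol{z},m)\bar{\mu}^\varepsilon_{\boldsymbol{x}-\varepsilon\boldsymbol{z},\Delta+m\varepsilon}(\mathrm{d}\boldsymbol{z})$.

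\textbf{Step 3: the total-mass identity.} To match (\ref{eq030102}) I need $\int\bar{\mu}^\varepsilon_{\boldsymbol{x},t}(\mathrm{d}\boldsymbol{z})=1$ wherever $p^\varepsilon(\boldsymbol{x},T-t)>0$. Under this identity the subtracted term $q^\varepsilon(\boldsymbol{x},m)\int\bar{\mu}^\varepsilon_{\boldsymbol{x},\Delta+m\varepsilon}(\mathrm{d}\boldsymbol{z})$ equals $q^\varepsilon(\boldsymbol{x},m)$, so $q^\varepsilon$ satisfies $q^\varepsilon(\cdot,m+1)-q^\varepsilon(\cdot,m)=\mathscr{K}^{\varepsilon,*}_{\cdot,\Delta+m\varepsilon}q^\varepsilon(\cdot,m)$. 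The identity itself follows from (\ref{eq030103}) with the change of variables $\boldsymbol{z}\mapsto-\boldsymbol{z}$ from Remark \ref{rem030101}:
\[
\int\bar{\mu}^\varepsilon_{\boldsymbol{x},t}(\mathrm{d}\boldsymbol{z})=\frac{\int p^\varepsilon(\boldsymbol{x}-\varepsilon\boldsymbol{w},T-t-\varepsilon)\mu_{\boldsymbol{x}-\varepsilon\boldsymbol{w}}(\mathrm{d}\boldsymbol{w})}{p^\varepsilon(\boldsymbol{x},T-t)}=1.
\]
The final equality is the forward equation (\ref{eq030002}) written as $p^\varepsilon(\boldsymbol{x},(n+1)\varepsilon)=\int p^\varepsilon(\boldsymbol{x}-\varepsilon\boldsymbol{w},n\varepsilon)\mu_{\boldsymbol{x}-\varepsilon\boldsymbol{w}}(\mathrm{d}\boldsymbol{w})$, taken with $T-t=(n+1)\varepsilon$. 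That choice is valid because $t=\Delta+m\varepsilon$ gives $T-t=(\lfloor T/\varepsilon\rfloor-m)\varepsilon$, a grid point.

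\textbf{Step 4: closing the induction.} Since (\ref{eq030102}) determines step $m+1$ uniquely from step $m$, the induction hypothesis $q^\varepsilon(\cdot,m)=\bar{p}^\varepsilon(\cdot,\Delta+m\varepsilon)$ yields equality at step $m+1$. Combining this with Step 1 extends the equality to all $t\in[0,T]$.

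The main obstacle is the bookkeeping in Step 3 at points where $p^\varepsilon(\boldsymbol{x},T-t)=0$, where the convention $0/0=0$ makes $\bar{\mu}^\varepsilon_{\boldsymbol{x},t}$ a defective measure. I expect to handle this by noting that $q^\varepsilon(\cdot,m)=\bar{p}^\varepsilon(\cdot,\Delta+m\varepsilon)$ vanishes on that set. The transition from such states is therefore irrelevant, being a null set under the current law, and both sides of the recursion contribute zero there. With this observation the argument goes through.
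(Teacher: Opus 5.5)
Your proposal is correct and follows the paper's route: the paper only says that comparing the Kolmogorov forward equation of $\bar{\boldsymbol{x}}^\varepsilon$ with (\ref{eq030102}) in Proposition \ref{theo030101} gives the result. You fill in the details the paper leaves implicit: the matching interval structure and base case via (\ref{eq030101}), the unit-mass identity for $\bar{\mu}^\varepsilon_{\boldsymbol{x},t}$ from (\ref{eq030002}), and the observation that states with $p^\varepsilon(\boldsymbol{x},T-t)=0$ carry zero probability, so the defective transition measure there does no harm.
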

\begin{remark}\label{rem030102}
	It is well known that the time reversal of a Markov process remains a Markov process \cite{Haussmann_1986,Millet_1989,Cattiaux_2023}. Under suitable conditions, a broad class of Markov processes with jumps in a continuous time setting also retains this property under the time reversal transformation $t\to {T-t}$ \cite{Conforti_2022}. The process $\bar{\boldsymbol{x}}^\varepsilon(t)$, introduced here to characterize the time evolution of the reversed family $\{\bar{p}^\varepsilon(\boldsymbol{x},t)\}_{t\in[0,T]}$, constitutes a discrete-time analogue of the time reversal of continuous-time Markov jump processes. One can consult Reference \cite{Conforti_2022} for a similar but more rigorous proof in the continuous time context.
\end{remark}
\subsection{Conditional Probability Density of the Reversed Process}\label{sec0302}
For a fixed $\boldsymbol{x}_T$, define
\begin{equation}\label{eq030201}
	q^\varepsilon(\boldsymbol{x},t;\boldsymbol{x}_T,T)\triangleq\frac{p^\varepsilon(\boldsymbol{x},t)p^\varepsilon(\boldsymbol{x}_T,T\vert\boldsymbol{x},t)}{p^\varepsilon(\boldsymbol{x}_T,T)},\quad {t}\in[0,T],
\end{equation}
and its time reversal as
\begin{equation}\label{eq030202}
	\bar{q}^\varepsilon(\boldsymbol{x},t;\boldsymbol{x}_T,T)\triangleq{q}^\varepsilon(\boldsymbol{x},T-t;\boldsymbol{x}_T,T)=\frac{p^\varepsilon(\boldsymbol{x},T-t)p^\varepsilon(\boldsymbol{x}_T,T\vert\boldsymbol{x},T-t)}{p^\varepsilon(\boldsymbol{x}_T,T)},\quad {t}\in[0,T].
\end{equation}
Clearly, $q^\varepsilon(\boldsymbol{x},t;\boldsymbol{x}_T,T)\geq{0}$, $\bar{q}^\varepsilon(\boldsymbol{x},t;\boldsymbol{x}_T,T)\geq{0}$, and 
\begin{equation*}
	\int_{\mathbb{R}^d}q^\varepsilon(\boldsymbol{x},t;\boldsymbol{x}_T,T)\lambda(\mathrm{d}\boldsymbol{x})=1,\quad \int_{\mathbb{R}^d}\bar{q}^\varepsilon(\boldsymbol{x},t;\boldsymbol{x}_T,T)\lambda(\mathrm{d}\boldsymbol{x})=1.
\end{equation*}
Thus, both $q^\varepsilon$ and $\bar{q}^\varepsilon$ can be interpreted as probability densities of certain stochastic processes, respectively. We proceed to define these processes explicitly.

Consider the difference equations
\begin{equation*}
	\begin{aligned}
		q^\varepsilon(\boldsymbol{x},t+\varepsilon&;\boldsymbol{x}_T,T)-q^\varepsilon(\boldsymbol{x},t;\boldsymbol{x}_T,T)\\
		&=\frac{p^\varepsilon(\boldsymbol{x},t+\varepsilon)p^\varepsilon(\boldsymbol{x}_T,T\vert\boldsymbol{x},t+\varepsilon)}{p^\varepsilon(\boldsymbol{x}_T,T)}-\frac{p^\varepsilon(\boldsymbol{x},t)p^\varepsilon(\boldsymbol{x}_T,T\vert\boldsymbol{x},t)}{p^\varepsilon(\boldsymbol{x}_T,T)},
	\end{aligned}
\end{equation*}
and 
\begin{equation*}
	\begin{aligned}
		\bar{q}^\varepsilon&(\boldsymbol{x},t+\varepsilon;\boldsymbol{x}_T,T)-\bar{q}^\varepsilon(\boldsymbol{x},t;\boldsymbol{x}_T,T)\\
		&=\frac{p^\varepsilon(\boldsymbol{x},T-t-\varepsilon)p^\varepsilon(\boldsymbol{x}_T,T\vert\boldsymbol{x},T-t-\varepsilon)}{p^\varepsilon(\boldsymbol{x}_T,T)}-\frac{p^\varepsilon(\boldsymbol{x},T-t)p^\varepsilon(\boldsymbol{x}_T,T\vert\boldsymbol{x},T-t)}{p^\varepsilon(\boldsymbol{x}_T,T)}.
	\end{aligned}
\end{equation*}
Substituting equations (\ref{eq030001}), (\ref{eq030002}), (\ref{eq030005}), and (\ref{eq030006}) into these expressions and regrouping terms leads to the following proposition:
\begin{proposition}\label{theo030201}
	(a) The density $\bar{q}^\varepsilon(\boldsymbol{x},t;\boldsymbol{x}_T,T)$ satisfies
	\begin{equation}\label{eq030203}
		\left\{\begin{aligned}
			&\bar{q}^\varepsilon(\boldsymbol{x},t;\boldsymbol{x}_T,T)=\bar{q}^\varepsilon(\boldsymbol{x},\Delta;\boldsymbol{x}_T,T), && t\in[0,\Delta],\\
			&\bar{q}^\varepsilon(\boldsymbol{x},t;\boldsymbol{x}_T,T)=\bar{q}^\varepsilon(\boldsymbol{x},\Delta+(m+1)\varepsilon;\boldsymbol{x}_T,T), && t\in(\Delta+m\varepsilon,\Delta+(m+1)\varepsilon],
		\end{aligned}\right.
	\end{equation}
	and 
	\begin{equation}\label{eq030204}
		\begin{aligned}
			\bar{q}^\varepsilon(\boldsymbol{x},\Delta+(m+1)\varepsilon;\boldsymbol{x}_T,T)&-\bar{q}^\varepsilon(\boldsymbol{x},\Delta+m\varepsilon;\boldsymbol{x}_T,T)\\
			&=\mathscr{K}_{\boldsymbol{x},\Delta+m\varepsilon}^{\varepsilon,*}\bar{q}^\varepsilon(\boldsymbol{x},\Delta+m\varepsilon;\boldsymbol{x}_T,T),
		\end{aligned}
	\end{equation}
	for $m=0,1,\cdots,\lfloor{T/\varepsilon}\rfloor-1$.
	
	(b) The density $q^\varepsilon(\boldsymbol{x},t;\boldsymbol{x}_T,T)$ satisfies
	\begin{equation}\label{eq030205}
		\left\{\begin{aligned}
			&q^\varepsilon(\boldsymbol{x},t;\boldsymbol{x}_T,T)=q^\varepsilon(\boldsymbol{x},n\varepsilon;\boldsymbol{x}_T,T), &&t\in[n\varepsilon,(n+1)\varepsilon),\\
			&q^\varepsilon(\boldsymbol{x},t;\boldsymbol{x}_T,T)=q^\varepsilon(\boldsymbol{x},\lfloor{T/\varepsilon}\rfloor\varepsilon;\boldsymbol{x}_T,T), && t\in[\lfloor{T/\varepsilon}\rfloor\varepsilon,T],
		\end{aligned}\right.
	\end{equation}
	and 
	\begin{equation}\label{eq030206}
		q^\varepsilon(\boldsymbol{x},(n+1)\varepsilon;\boldsymbol{x}_T,T)-q^\varepsilon(\boldsymbol{x},n\varepsilon;\boldsymbol{x}_T,T)=\mathscr{L}_{\boldsymbol{x},n\varepsilon}^{\varepsilon,*}q^\varepsilon(\boldsymbol{x},n\varepsilon;\boldsymbol{x}_T,T),
	\end{equation}
	for $n=0,1,\cdots,\lfloor{T/\varepsilon}\rfloor-1$. Here, the operator $\mathscr{L}_{\boldsymbol{x},t}^{\varepsilon,*}$ is defined for each bounded continuous function $f(\boldsymbol{x},t)$ and each $t=n\varepsilon\in[0,T-\Delta-\varepsilon]$ by
	\begin{equation*}
		\mathscr{L}_{\boldsymbol{x},t}^{\varepsilon,*}f(\boldsymbol{x},t)\triangleq\int_{\mathbb{R}^d}\left[f(\boldsymbol{x}-\varepsilon\boldsymbol{z},t)\tilde{\mu}_{\boldsymbol{x}-\varepsilon\boldsymbol{z},t}^\varepsilon(\mathrm{d}\boldsymbol{z})-f(\boldsymbol{x},t)\tilde{\mu}_{\boldsymbol{x},t}^\varepsilon(\mathrm{d}\boldsymbol{z})\right],
	\end{equation*}
	with
	\begin{equation}\label{eq030207}
		\tilde{\mu}_{\boldsymbol{x},t}^\varepsilon(\mathrm{d}\boldsymbol{z})\triangleq\frac{p^\varepsilon(\boldsymbol{x}_T,T\vert\boldsymbol{x}+\varepsilon\boldsymbol{z},t+\varepsilon)\mu_{\boldsymbol{x}}(\mathrm{d}\boldsymbol{z})}{p^\varepsilon(\boldsymbol{x}_T,T\vert\boldsymbol{x},t)}.
	\end{equation}
\end{proposition}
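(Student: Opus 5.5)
We prove both parts by direct substitution into the defining quotients (\ref{eq030201})--(\ref{eq030202}), using the forward equation (\ref{eq030002}) for the marginal density and the backward equation (\ref{eq030006}) for the transition density $p^\varepsilon(\boldsymbol{x}_T,T\vert\cdot,\cdot)$. The piecewise-constant statements (\ref{eq030203}) and (\ref{eq030205}) follow at once from (\ref{eq030001}) and (\ref{eq030005}): both factors in $q^\varepsilon$ are constant in $t$ on $[n\varepsilon,(n+1)\varepsilon)$, and the reversed intervals $(\Delta+m\varepsilon,\Delta+(m+1)\varepsilon]$ are exactly the images of these under $t\mapsto T-t$. It therefore remains to verify the two difference equations.

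\textbf{Part (b).} Write $g(\boldsymbol{y},s)\triangleq p^\varepsilon(\boldsymbol{x}_T,T\vert\boldsymbol{y},s)$ and $t=n\varepsilon$.

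Rewrite the backward equation (\ref{eq030006}) as $g(\boldsymbol{x},t)=\int g(\boldsymbol{x}+\varepsilon\boldsymbol{z},t+\varepsilon)\mu_{\boldsymbol{x}}(\mathrm{d}\boldsymbol{z})$. This identity says precisely that $\tilde{\mu}^\varepsilon_{\boldsymbol{x},t}$ in (\ref{eq030207}) is a probability measure (with the convention $0/0=0$).

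Next compute
\begin{equation*}
p^\varepsilon(\boldsymbol{x}_T,T)\,\mathscr{L}^{\varepsilon,*}_{\boldsymbol{x},t}q^\varepsilon=\int p^\varepsilon(\boldsymbol{x}-\varepsilon\boldsymbol{z},t)\,g(\boldsymbol{x},t+\varepsilon)\,\mu_{\boldsymbol{x}-\varepsilon\boldsymbol{z}}(\mathrm{d}\boldsymbol{z})-p^\varepsilon(\boldsymbol{x},t)\,g(\boldsymbol{x},t).
\end{equation*}
In the first term, the factor $g(\boldsymbol{x}-\varepsilon\boldsymbol{z},t)$ in $q^\varepsilon$ cancels against the denominator of $\tilde\mu$, and its numerator evaluates $g$ at $(\boldsymbol{x}-\varepsilon\boldsymbol{z})+\varepsilon\boldsymbol{z}=\boldsymbol{x}$. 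In the second term, $\tilde\mu^\varepsilon_{\boldsymbol{x},t}$ has total mass one.

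Now pull $g(\boldsymbol{x},t+\varepsilon)$ out of the integral. The forward equation (\ref{eq030002}) gives $\int p^\varepsilon(\boldsymbol{x}-\varepsilon\boldsymbol{z},t)\mu_{\boldsymbol{x}-\varepsilon\boldsymbol{z}}(\mathrm{d}\boldsymbol{z})=p^\varepsilon(\boldsymbol{x},t+\varepsilon)$. The right-hand side therefore equals $p^\varepsilon(\boldsymbol{x},t+\varepsilon)g(\boldsymbol{x},t+\varepsilon)-p^\varepsilon(\boldsymbol{x},t)g(\boldsymbol{x},t)$, which is the left-hand side of (\ref{eq030206}).

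\textbf{Part (a).} Set $s=T-t$ with $t=\Delta+m\varepsilon$, so that $s$ is a grid point $n\varepsilon$. Recall from Proposition \ref{theo030101} that $\bar\mu^\varepsilon_{\boldsymbol{x},t}$ is a probability measure: its total mass is $p^\varepsilon(\boldsymbol{x},s)^{-1}\int p^\varepsilon(\boldsymbol{x}-\varepsilon\boldsymbol{w},s-\varepsilon)\mu_{\boldsymbol{x}-\varepsilon\boldsymbol{w}}(\mathrm{d}\boldsymbol{w})=1$ by (\ref{eq030002}), after the substitution $\boldsymbol{w}=-\boldsymbol{z}$.

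Expand $p^\varepsilon(\boldsymbol{x}_T,T)\,\mathscr{K}^{\varepsilon,*}_{\boldsymbol{x},t}\bar q^\varepsilon$ term by term.

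For the first term, the factor $p^\varepsilon(\boldsymbol{x}-\varepsilon\boldsymbol{z},s)$ cancels with the denominator of $\bar\mu^\varepsilon_{\boldsymbol{x}-\varepsilon\boldsymbol{z},t}$. What remains is $\int g(\boldsymbol{x}-\varepsilon\boldsymbol{z},s)\,p^\varepsilon(\boldsymbol{x},s-\varepsilon)\,\mu_{\boldsymbol{x}}(-\mathrm{d}\boldsymbol{z})$. Substituting $\boldsymbol{w}=-\boldsymbol{z}$ turns this into $p^\varepsilon(\boldsymbol{x},s-\varepsilon)\int g(\boldsymbol{x}+\varepsilon\boldsymbol{w},s)\mu_{\boldsymbol{x}}(\mathrm{d}\boldsymbol{w})$. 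By the backward equation (\ref{eq030006}) this equals $p^\varepsilon(\boldsymbol{x},s-\varepsilon)\,g(\boldsymbol{x},s-\varepsilon)$.

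For the second term, the unit mass of $\bar\mu^\varepsilon_{\boldsymbol{x},t}$ leaves $-p^\varepsilon(\boldsymbol{x},s)g(\boldsymbol{x},s)$.

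The difference of these two terms is exactly $\bar q^\varepsilon(\boldsymbol{x},t+\varepsilon)-\bar q^\varepsilon(\boldsymbol{x},t)$, which proves (\ref{eq030204}).

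\textbf{Main obstacle.} The algebra is routine. The delicate points are the index bookkeeping and the measure-theoretic details, which we handle as follows.
\begin{itemize}
\item[(i)] We check that $T-t-\varepsilon$ and $t+\varepsilon$ are grid points at which (\ref{eq030002}) and (\ref{eq030006}) apply, using $\Delta$ and (\ref{eq030005}) at the boundary indices.
\item[(ii)] We treat the push-forward $\mu_{\boldsymbol{x}}(-\mathrm{d}\boldsymbol{z})$ and the change of variables rigorously.
\item[(iii)] We handle the zero-density sets via the $0/0=0$ convention. On $\{p^\varepsilon(\boldsymbol{x},s)=0\}$ or $\{g=0\}$ the cancellations above are not literal, but both sides of each identity vanish there, so we verify those sets separately.
\end{itemize}
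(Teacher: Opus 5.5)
Your proposal is correct and takes essentially the same route as the paper. The paper simply substitutes the piecewise-constancy relations (\ref{eq030001}), (\ref{eq030005}) and the forward/backward equations (\ref{eq030002}), (\ref{eq030006}) into the defining quotients and regroups; your version carries out that computation explicitly, and correctly uses the backward equation for the unit mass of $\tilde{\mu}^\varepsilon_{\boldsymbol{x},t}$ and the forward equation for the unit mass of $\bar{\mu}^\varepsilon_{\boldsymbol{x},t}$, with the roles swapped in the cancellation (first) terms.
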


Comparing equations (\ref{eq030203}), (\ref{eq030204}) with (\ref{eq030101}), (\ref{eq030102}), we obtain:
\begin{corollary}\label{theo030202}
	(a) For each $t\in[0,T]$, $\bar{q}^\varepsilon(\boldsymbol{x},t;\boldsymbol{x}_T,T)$ is the probability density of $\bar{\boldsymbol{x}}^\varepsilon(t)$ when the initial density of $\bar{\boldsymbol{x}}^\varepsilon(0)$ is $\bar{q}^\varepsilon(\boldsymbol{x},0;\boldsymbol{x}_T,T)=\delta(\boldsymbol{x}-\boldsymbol{x}_T)$, i.e., $\bar{\boldsymbol{x}}^\varepsilon(0)=\boldsymbol{x}_T$ almost surely.
	
	(b) $\bar{q}^\varepsilon(\boldsymbol{x},t;\boldsymbol{x}_T,T)$ is the conditional probability density of the reversed family $\{\bar{p}^\varepsilon(\boldsymbol{x},t)\}_{t\in[0,T]}$ in the sense that
	\begin{equation*}
		\int_{\mathbb{R}^d}{\bar{p}^\varepsilon(\boldsymbol{x}_T,0)}\bar{q}^\varepsilon(\boldsymbol{x},t;\boldsymbol{x}_T,T)\lambda(\mathrm{d}\boldsymbol{x}_T)=\bar{p}^\varepsilon(\boldsymbol{x},t),
	\end{equation*}
	i.e., the Chapman-Kolmogorov-type equation holds.
\end{corollary}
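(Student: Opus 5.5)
The plan for part (a) is to reduce it to Corollary \ref{theo030102}. Proposition \ref{theo030201}(a) shows that the family $\{\bar{q}^\varepsilon(\cdot,t;\boldsymbol{x}_T,T)\}_{t\in[0,T]}$ obeys exactly the same piecewise-constant structure (\ref{eq030203}) and the same forward recursion (\ref{eq030204}) as $\bar{p}^\varepsilon$ does in (\ref{eq030101})--(\ref{eq030102}). The point to stress is that both use the same operator $\mathscr{K}^{\varepsilon,*}_{\boldsymbol{x},\Delta+m\varepsilon}$, built from $\bar{\mu}^\varepsilon$ of (\ref{eq030103}), which depends only on $p^\varepsilon$ and $\mu$ and not on the initial law.

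The key steps are as follows.
\begin{enumerate}
\item Write down the forward equation of the process $\bar{\boldsymbol{x}}^\varepsilon$ defined in (\ref{eq030104}). Conditioning on $\bar{\boldsymbol{x}}^\varepsilon(\Delta+m\varepsilon)$ and integrating against $\bar{\mu}^\varepsilon_{\cdot,\Delta+m\varepsilon}$ shows that its one-time marginal law $r(\cdot,t)$ satisfies the recursion $r(\cdot,\Delta+(m+1)\varepsilon)-r(\cdot,\Delta+m\varepsilon)=\mathscr{K}^{\varepsilon,*}r(\cdot,\Delta+m\varepsilon)$. This holds for an arbitrary initial law, which is exactly the comparison already used for Corollary \ref{theo030102}.
\item Compute the initial datum of $\bar q^\varepsilon$. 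By definition, $\bar{q}^\varepsilon(\boldsymbol{x},0)=p^\varepsilon(\boldsymbol{x},T)p^\varepsilon(\boldsymbol{x}_T,T|\boldsymbol{x},T)/p^\varepsilon(\boldsymbol{x}_T,T)$. By (\ref{eq030005}), $p^\varepsilon(\boldsymbol{x}_T,T|\boldsymbol{x},s)=\delta(\boldsymbol{x}-\boldsymbol{x}_T)$ for $s\in[\lfloor T/\varepsilon\rfloor\varepsilon,T]$. Hence $\bar q^\varepsilon(\boldsymbol{x},0)=\delta(\boldsymbol{x}-\boldsymbol{x}_T)$, and this also holds on all of $[0,\Delta]$.
\item Conclude by induction on $m$. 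The recursion is linear and explicit, so each step is determined by the previous one. Two families with the same initial datum therefore coincide at every grid time $\Delta+m\varepsilon$, and then at every $t$ by the piecewise-constant conventions.
\end{enumerate}

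For part (b), integrate $\bar{q}^\varepsilon(\boldsymbol{x},t;\boldsymbol{x}_T,T)$ against $\bar p^\varepsilon(\boldsymbol{x}_T,0)=p^\varepsilon(\boldsymbol{x}_T,T)$. The denominator cancels, leaving $p^\varepsilon(\boldsymbol{x},T-t)\int p^\varepsilon(\boldsymbol{x}_T,T|\boldsymbol{x},T-t)\,\mathrm{d}\boldsymbol{x}_T$. The remaining integral equals $1$ because a transition density is normalized, so the result is $\bar p^\varepsilon(\boldsymbol{x},t)$. One could alternatively argue by linearity together with part (a) and Corollary \ref{theo030102}, but the direct computation is shorter.

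The main obstacle is the singular terminal datum in step 2. There $p^\varepsilon(\boldsymbol{x},T)$ multiplies a Dirac mass and is divided by $p^\varepsilon(\boldsymbol{x}_T,T)$, and the convention $0/0=0$ of Remark \ref{rem030101} must be handled. I would treat it by evaluating in the distributional sense, $p^\varepsilon(\boldsymbol{x},T)\delta(\boldsymbol{x}-\boldsymbol{x}_T)=p^\varepsilon(\boldsymbol{x}_T,T)\delta(\boldsymbol{x}-\boldsymbol{x}_T)$, assuming $p^\varepsilon(\boldsymbol{x}_T,T)>0$. I would also check that $\bar\mu^\varepsilon_{\boldsymbol{x}_T,\Delta}$ is a genuine probability measure. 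This follows from (\ref{eq030002}), since $\int p^\varepsilon(\boldsymbol{x}+\varepsilon\boldsymbol{z},s-\varepsilon)\mu_{\boldsymbol{x}+\varepsilon\boldsymbol{z}}(-\mathrm{d}\boldsymbol{z})=p^\varepsilon(\boldsymbol{x},s)$, which normalizes the first jump.
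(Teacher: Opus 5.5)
Your proposal is correct and follows the paper's route. For part (a), the paper simply compares the recursion (\ref{eq030203})--(\ref{eq030204}) for $\bar q^\varepsilon$ with the forward equation (\ref{eq030101})--(\ref{eq030102}) driving $\bar{\boldsymbol x}^\varepsilon$, and your computation of the initial datum via (\ref{eq030005}), together with the induction, makes explicit what it leaves implicit. For part (b) the paper gives no separate argument, and your direct cancellation, using that $p^\varepsilon(\cdot,T\vert\boldsymbol x,T-t)$ integrates to one, is the natural justification.
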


Let $\{\tilde{\boldsymbol{x}}^\varepsilon(t):t\in[0,T]\}$ be another right-continuous stochastic step function defined by
\begin{equation}\label{eq030208}
	\left\{\begin{aligned}
		&\tilde{\boldsymbol{x}}^\varepsilon(t)=\tilde{\boldsymbol{x}}^\varepsilon(0), &&t\in[0,\varepsilon),\\
		&\tilde{\boldsymbol{x}}^\varepsilon(t)=\tilde{\boldsymbol{x}}^\varepsilon(0)+\varepsilon\sum_{n=0}^{\lfloor{t/\varepsilon}\rfloor-1}\tilde{\boldsymbol{Z}}^\varepsilon_{n}, &&t\in[\varepsilon,T],
	\end{aligned}\right.
\end{equation}
	where $\{\tilde{\boldsymbol{Z}}^\varepsilon_{n}\}$ is a jump process whose conditional jump probability at time $n$ is non-stationary (i.e., dependent on the variable $n$), independent of $\tilde{\boldsymbol{Z}}^\varepsilon_{k}$ for $k<n$, dependent only on the current state of $\tilde{\boldsymbol{x}}^\varepsilon(n\varepsilon)$ (i.e., independent of
	previous states of $\tilde{\boldsymbol{x}}^\varepsilon(k\varepsilon)$ for $k<n$), and is given by $\tilde{\mu}_{\boldsymbol{x},n\varepsilon}^\varepsilon$. That is, for any Borel set $D\subset\mathbb{R}^d$ and $n=0,1,\cdots,\lfloor{T/\varepsilon}\rfloor-1$, 
	\begin{equation*}
		P^{\varepsilon}\left(\tilde{\boldsymbol{Z}}^\varepsilon_{n}\in{D}\vert\tilde{\boldsymbol{x}}^\varepsilon(n\varepsilon)=\boldsymbol{x},\tilde{\mathcal{F}}_{<n}\right)=\tilde{\mu}_{\boldsymbol{x},n\varepsilon}^\varepsilon(D),
	\end{equation*}
	where $\tilde{\mathcal{F}}_{<n}$ denotes the history of $\tilde{\boldsymbol{x}}^\varepsilon(k\varepsilon)$ before $n$. This leads to:
	\begin{corollary}\label{theo030203}
		For each $t\in[0,T]$, $q^\varepsilon(\boldsymbol{x},t;\boldsymbol{x}_T,T)$ is the probability density of $\tilde{\boldsymbol{x}}^\varepsilon(t)$ when the initial density of $\tilde{\boldsymbol{x}}^\varepsilon(0)$ is 
		\begin{equation*}
			q^\varepsilon(\boldsymbol{x},0;\boldsymbol{x}_T,T)=\frac{p^\varepsilon(\boldsymbol{x},0)p^\varepsilon(\boldsymbol{x}_T,T\vert\boldsymbol{x},0)}{p^\varepsilon(\boldsymbol{x}_T,T)}.
		\end{equation*}
	\end{corollary}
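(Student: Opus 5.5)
The statement to prove is Corollary~\ref{theo030203}. The plan is to compare the forward equation for $q^\varepsilon$ in Proposition~\ref{theo030201}(b) with the forward equation satisfied by the one-step marginal densities of $\tilde{\boldsymbol{x}}^\varepsilon$. We then conclude by induction on the step index $n$, in the same way Corollary~\ref{theo030202}(a) was obtained from Proposition~\ref{theo030201}(a).

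\textbf{Step 1: $\tilde{\mu}^\varepsilon_{\boldsymbol{x},n\varepsilon}$ is a probability measure.} Nonnegativity is clear. For the total mass, the Kolmogorov backward equation (\ref{eq030006}) in integrated form gives
\begin{equation*}
	\int_{\mathbb{R}^d}p^\varepsilon(\boldsymbol{x}_T,T\vert\boldsymbol{x}+\varepsilon\boldsymbol{z},(n+1)\varepsilon)\mu_{\boldsymbol{x}}(\mathrm{d}\boldsymbol{z})=p^\varepsilon(\boldsymbol{x}_T,T\vert\boldsymbol{x},n\varepsilon).
\end{equation*}
Dividing by the denominator in (\ref{eq030207}) shows $\tilde{\mu}^\varepsilon_{\boldsymbol{x},n\varepsilon}(\mathbb{R}^d)=1$. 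Where the denominator vanishes, we use the convention $0/0=0$ of Remark~\ref{rem030101}; such $\boldsymbol{x}$ carry zero $q^\varepsilon$-mass, so they do not matter. Hence $\tilde{\boldsymbol{x}}^\varepsilon$ is a well-defined time-inhomogeneous Markov chain on the grid $\{n\varepsilon\}$.

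\textbf{Step 2: forward equation for $\tilde{\boldsymbol{x}}^\varepsilon$.} Let $\tilde p(\cdot,n\varepsilon)$ denote the density of $\tilde{\boldsymbol{x}}^\varepsilon(n\varepsilon)$. For a bounded continuous test function $g$, the Markov property and the jump rule give
\begin{equation*}
	\int g(\boldsymbol{y})\,\tilde p(\boldsymbol{y},(n+1)\varepsilon)\,\mathrm{d}\boldsymbol{y}=\iint g(\boldsymbol{x}+\varepsilon\boldsymbol{z})\,\tilde{\mu}^\varepsilon_{\boldsymbol{x},n\varepsilon}(\mathrm{d}\boldsymbol{z})\,\tilde p(\boldsymbol{x},n\varepsilon)\,\mathrm{d}\boldsymbol{x}.
\end{equation*}
Substitute $\boldsymbol{y}=\boldsymbol{x}+\varepsilon\boldsymbol{z}$ and subtract $\int g\,\tilde p(\cdot,n\varepsilon)$. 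Since $\tilde{\mu}^\varepsilon$ has unit mass, the difference $\tilde p(\cdot,(n+1)\varepsilon)-\tilde p(\cdot,n\varepsilon)$ equals $\mathscr{L}^{\varepsilon,*}_{\boldsymbol{x},n\varepsilon}\tilde p(\cdot,n\varepsilon)$ in the weak sense. This is exactly the operator of Proposition~\ref{theo030201}(b), by the same change of variables that defines $\mathscr{J}^{\varepsilon,*}_{\boldsymbol{x}}$ as the adjoint of $\mathscr{J}^\varepsilon_{\boldsymbol{x}}$. Between grid points, both $\tilde{\boldsymbol{x}}^\varepsilon$ and $q^\varepsilon$ are piecewise constant on the same intervals, by (\ref{eq030208}) and (\ref{eq030205}).

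\textbf{Step 3: induction.} The one-step recursion $f\mapsto f+\mathscr{L}^{\varepsilon,*}_{\boldsymbol{x},n\varepsilon}f$ is a deterministic linear map. Therefore two families with the same initial datum coincide at every $n\varepsilon$, $n\le\lfloor T/\varepsilon\rfloor$. By (\ref{eq030205}) and (\ref{eq030208}) they then coincide on all of $[0,T]$.

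The main obstacle is rigor rather than ideas. Steps~1 and~2 must be justified at the level of measures: $\mu_{\boldsymbol{x}}$ may not have densities, and $p^\varepsilon(\cdot\vert\boldsymbol{x}_s,s)$ is a delta for the first step, so I would phrase everything weakly against test functions. The zero sets of $p^\varepsilon(\boldsymbol{x}_T,T\vert\boldsymbol{x},t)$ also need care. As a sanity check, I would verify directly that $q^\varepsilon$ solves its own recursion: by the Markov property, $q^\varepsilon(\cdot,n\varepsilon)$ is the density of $\boldsymbol{x}^\varepsilon(n\varepsilon)$ conditioned on $\boldsymbol{x}^\varepsilon(T)=\boldsymbol{x}_T$, and its transition kernel is the Doob $h$-transform of $\mu_{\boldsymbol{x}}$ with $h=p^\varepsilon(\boldsymbol{x}_T,T\vert\cdot,\cdot)$. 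That kernel is precisely $\tilde{\mu}^\varepsilon$.
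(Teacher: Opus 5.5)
Your proposal is correct and is essentially the paper's argument: the corollary follows by matching the recursion (\ref{eq030205})--(\ref{eq030206}) of Proposition~\ref{theo030201}(b) with the Kolmogorov forward equation of $\tilde{\boldsymbol{x}}^\varepsilon$, and then using that this one-step linear recursion is uniquely determined by the common initial datum. Your additional checks fill in details the paper leaves implicit: the normalization of $\tilde{\mu}^\varepsilon_{\boldsymbol{x},n\varepsilon}$ via the backward equation (\ref{eq030006}), the weak formulation that handles deltas and zero sets, and the Doob $h$-transform reading of $\tilde{\mu}^\varepsilon$, which is consistent with Remark~\ref{rem030201}.
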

	\begin{remark}\label{rem030201}
		The process $\tilde{\boldsymbol{x}}^\varepsilon(t)$ can be described alternatively as follows. By Proposition \ref{theo030201}(a), the family $\{\bar{q}^\varepsilon(\boldsymbol{x},t;\boldsymbol{x}_T,T)\}_{t\in[0,T]}$ consists of probability densities of the process $\bar{\boldsymbol{x}}^\varepsilon(t)$. Repeating the derivation presented in Sec. \ref{sec0301}, it is easy to show that the reversed family $\{\bar{\bar{q}}^\varepsilon(\boldsymbol{x},t;\boldsymbol{x}_T,T)=q^\varepsilon(\boldsymbol{x},t;\boldsymbol{x}_T,T)\}_{t\in[0,T]}$ satisfies Eq. (\ref{eq030205}) and another Kolmogorov forward equation with conditional jump probability:
		\begin{equation*}
			\begin{aligned}
				\bar{\bar{\mu}}^\varepsilon_{\boldsymbol{x},t}(\mathrm{d}\boldsymbol{z})&\triangleq\frac{\bar{q}^\varepsilon(\boldsymbol{x}+\varepsilon\boldsymbol{z},T-t-\varepsilon;\boldsymbol{x}_T,T)\bar{\mu}^\varepsilon_{\boldsymbol{x}+\varepsilon\boldsymbol{z},T-t-\varepsilon}(-\mathrm{d}\boldsymbol{z})}{\bar{q}^\varepsilon(\boldsymbol{x},T-t;\boldsymbol{x}_T,T)}\\
				&=\frac{p^\varepsilon(\boldsymbol{x}_T,T\vert\boldsymbol{x}+\varepsilon\boldsymbol{z},t+\varepsilon)\mu_{\boldsymbol{x}}(\mathrm{d}\boldsymbol{z})}{p^\varepsilon(\boldsymbol{x}_T,T\vert\boldsymbol{x},t)},
			\end{aligned}
		\end{equation*} 
		which is exactly the conditional probability defined in Eq. (\ref{eq030207}). Hence, $\tilde{\boldsymbol{x}}^\varepsilon(t)$ defined here is a stochastic process characterizing how the time reversal of the family $\{\bar{q}^\varepsilon(\boldsymbol{x},t;\boldsymbol{x}_T,T)\}_{t\in[0,T]}$ evolves. We can therefore view $\tilde{\boldsymbol{x}}^\varepsilon(t)$ as a time reversal of the process $\bar{\boldsymbol{x}}^\varepsilon(t)$, and to some extent, as a double time reversal of the original process $\boldsymbol{x}^\varepsilon(t)$.
	\end{remark}
\subsection{Non-stationary Prehistory Probability Density}\label{sec0304}
\begin{definition}\label{def030401}
	The \textbf{non-stationary prehistory probability density (NPPD)} is defined for each $t\in[0,T]$ as
	\begin{equation}\label{eq030401}
		q^{\varepsilon,\text{NPPD}}(\boldsymbol{x},t;\boldsymbol{x}_T,T;\boldsymbol{x}_0,0)\triangleq\frac{p^\varepsilon(\boldsymbol{x},t\vert\boldsymbol{x}_0,0)p^\varepsilon(\boldsymbol{x}_T,T\vert\boldsymbol{x},t)}{p^\varepsilon(\boldsymbol{x}_T,T\vert\boldsymbol{x}_0,0)},
	\end{equation}
	with its time reversal given by
	\begin{equation}\label{eq030402}
		\begin{aligned}
			\bar{q}^{\varepsilon,\text{NPPD}}(\boldsymbol{x},t;\boldsymbol{x}_T,T;\boldsymbol{x}_0,0)&\triangleq{q}^{\varepsilon,\text{NPPD}}(\boldsymbol{x},T-t;\boldsymbol{x}_T,T;\boldsymbol{x}_0)\\
			&=\frac{p^\varepsilon(\boldsymbol{x},T-t\vert\boldsymbol{x}_0,0)p^\varepsilon(\boldsymbol{x}_T,T\vert\boldsymbol{x},T-t)}{p^\varepsilon(\boldsymbol{x}_T,T\vert\boldsymbol{x}_0,0)}.
		\end{aligned}
	\end{equation}
\end{definition}

Similarly, we obtain:
\begin{corollary}\label{theo030401}
	(a) $q^{\varepsilon,\text{NPPD}}(\boldsymbol{x},t;\boldsymbol{x}_T,T;\boldsymbol{x}_0,0)$ satisfies Eq. (\ref{eq030205}) and 
	\begin{equation}\label{eq030403}
		\begin{aligned}
			q^{\varepsilon,\text{NPPD}}(\boldsymbol{x},(n+1)\varepsilon;\boldsymbol{x}_T,T;\boldsymbol{x}_0,0)&-q^{\varepsilon,\text{NPPD}}(\boldsymbol{x},n\varepsilon;\boldsymbol{x}_T,T;\boldsymbol{x}_0,0)\\
			&=\mathscr{L}_{\boldsymbol{x},n\varepsilon}^{\varepsilon,*}q^{\varepsilon,\text{NPPD}}(\boldsymbol{x},n\varepsilon;\boldsymbol{x}_T,T;\boldsymbol{x}_0,0).
		\end{aligned}
	\end{equation}
	Therefore, for each $t\in[0,T]$, $q^{\varepsilon,\text{NPPD}}(\boldsymbol{x},t;\boldsymbol{x}_T,T;\boldsymbol{x}_0,0)$ is the probability density of $\tilde{\boldsymbol{x}}^\varepsilon(t)$ when the initial density of $\tilde{\boldsymbol{x}}^\varepsilon(0)$ is $q^{\varepsilon,\text{NPPD}}(\boldsymbol{x},0;\boldsymbol{x}_T,T;\boldsymbol{x}_0,0)=\delta(\boldsymbol{x}-\boldsymbol{x}_0)$, i.e., $\tilde{\boldsymbol{x}}^\varepsilon(0)=\boldsymbol{x}_0$ almost surely.
	
	(b) $\bar{q}^{\varepsilon,\text{NPPD}}(\boldsymbol{x},t;\boldsymbol{x}_T,T;\boldsymbol{x}_0,0)$ is the conditional probability density of the reversed family $\{\bar{p}^\varepsilon(\boldsymbol{x},t)=p^\varepsilon(\boldsymbol{x},T-t\vert\boldsymbol{x}_0,0)\}_{t\in[0,T]}$. It satisfies Eq. (\ref{eq030203}) and 
	\begin{equation}\label{eq030404}
		\begin{aligned}
			\bar{q}^{\varepsilon,\text{NPPD}}(\boldsymbol{x},\Delta+(m+1)\varepsilon;\boldsymbol{x}_T,T;\boldsymbol{x}_0,0)&-\bar{q}^{\varepsilon,\text{NPPD}}(\boldsymbol{x},\Delta+m\varepsilon;\boldsymbol{x}_T,T;\boldsymbol{x}_0,0)\\
			=\mathscr{N}^{\varepsilon,*}_{\boldsymbol{x},\Delta+m\varepsilon}&\bar{q}^{\varepsilon,\text{NPPD}}(\boldsymbol{x},\Delta+m\varepsilon;\boldsymbol{x}_T,T;\boldsymbol{x}_0,0),
		\end{aligned}
	\end{equation}
	where
	\begin{equation*}
		\mathscr{N}^{\varepsilon,*}_{\boldsymbol{x},t}f(\boldsymbol{x},t)\triangleq\int_{\mathbb{R}^d}\left[f(\boldsymbol{x}-\varepsilon\boldsymbol{z},t)\bar{\mu}^{\varepsilon,\text{NPPD}}_{\boldsymbol{x}-\varepsilon\boldsymbol{z},t}(\mathrm{d}\boldsymbol{z})-f(\boldsymbol{x},t)\bar{\mu}^{\varepsilon,\text{NPPD}}_{\boldsymbol{x},t}(\mathrm{d}\boldsymbol{z})\right],
	\end{equation*}
	with
	\begin{equation}\label{eq030405}
		\bar{\mu}^{\varepsilon,\text{NPPD}}_{\boldsymbol{x},t}(\mathrm{d}\boldsymbol{z})\triangleq\frac{p^\varepsilon(\boldsymbol{x}+\varepsilon\boldsymbol{z},T-t-\varepsilon\vert\boldsymbol{x}_0,0)\mu_{\boldsymbol{x}+\varepsilon\boldsymbol{z}}(-\mathrm{d}\boldsymbol{z})}{p^\varepsilon(\boldsymbol{x},T-t\vert\boldsymbol{x}_0,0)},
	\end{equation}
	for $t=\Delta+m\varepsilon\in[0,T-\varepsilon]$ and $m=0,1,\cdots,\lfloor{T/\varepsilon}\rfloor-1$. Now define a new left-continuous stochastic step function $\{\bar{\boldsymbol{x}}^{\varepsilon,\text{NPPD}}(t):t\in[0,T]\}$ as
	\begin{equation}\label{eq030406}
		\left\{\begin{aligned}
			&\bar{\boldsymbol{x}}^{\varepsilon,\text{NPPD}}(t)=\bar{\boldsymbol{x}}^{\varepsilon,\text{NPPD}}(\Delta), &&t\in[0,\Delta],\\
			&\bar{\boldsymbol{x}}^{\varepsilon,\text{NPPD}}(t)=\bar{\boldsymbol{x}}^{\varepsilon,\text{NPPD}}(\Delta)+\varepsilon\sum_{m=0}^{\lceil{(t-\Delta)/\varepsilon}\rceil-1}\bar{\boldsymbol{Z}}^{\varepsilon,\text{NPPD}}_{m}, &&t\in(\Delta,T],
		\end{aligned}\right.
	\end{equation}
	where $\{\bar{\boldsymbol{Z}}^{\varepsilon,\text{NPPD}}_{m}\}$ is a jump process whose conditional jump probability at time $m$ is non-stationary (i.e., dependent on the variable $m$), independent of $\bar{\boldsymbol{Z}}^{\varepsilon,\text{NPPD}}_{k}$ for $k<m$, dependent only on the current state of $\bar{\boldsymbol{x}}^{\varepsilon,\text{NPPD}}(\Delta+m\varepsilon)$ (i.e., independent of previous states of $\bar{\boldsymbol{x}}^{\varepsilon,\text{NPPD}}(\Delta+k\varepsilon)$ for $k<m$), and is given by $\bar{\mu}^{\varepsilon,\text{NPPD}}_{\boldsymbol{x},\Delta+m\varepsilon}$. That is, for any Borel set $D\subset\mathbb{R}^d$ and $m=0,1,\cdots,\lfloor{T/\varepsilon}\rfloor-1$, 
	\begin{equation*}
		P^{\varepsilon}\left(\bar{\boldsymbol{Z}}^{\varepsilon,\text{NPPD}}_{m}\in{D}\vert\bar{\boldsymbol{x}}^{\varepsilon,\text{NPPD}}(\Delta+m\varepsilon)=\boldsymbol{x},\bar{\mathcal{F}}^{\text{NPPD}}_{<m}\right)=\bar{\mu}^{\varepsilon,\text{NPPD}}_{\boldsymbol{x},\Delta+m\varepsilon}(D),
	\end{equation*}
	where $\bar{\mathcal{F}}^{\text{NPPD}}_{<m}$ denotes the history of $\bar{\boldsymbol{x}}^{\varepsilon,\text{NPPD}}(\Delta+k\varepsilon)$ before $m$. Then, the function $\bar{q}^{\varepsilon,\text{NPPD}}(\boldsymbol{x},t;\boldsymbol{x}_T,T;\boldsymbol{x}_0,0)$ is the probability density of $\bar{\boldsymbol{x}}^{\varepsilon,\text{NPPD}}(t)$ when $\bar{\boldsymbol{x}}^{\varepsilon,\text{NPPD}}(0)=\boldsymbol{x}_T$ almost surely.
\end{corollary}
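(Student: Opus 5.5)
The plan is to specialise the general results of Sec. \ref{sec0301}--\ref{sec0302} to the choice of density family $p^\varepsilon(\boldsymbol{x},t)=p^\varepsilon(\boldsymbol{x},t\vert\boldsymbol{x}_0,0)$, i.e. the family generated by (\ref{eq030001})--(\ref{eq030002}) with initial density $\delta(\boldsymbol{x}-\boldsymbol{x}_0)$. Once this substitution is made, $p^\varepsilon(\boldsymbol{x}_T,T)$ becomes $p^\varepsilon(\boldsymbol{x}_T,T\vert\boldsymbol{x}_0,0)$. Then $q^\varepsilon$ in (\ref{eq030201}) coincides with $q^{\varepsilon,\text{NPPD}}$ in (\ref{eq030401}), and $\bar{q}^\varepsilon$ in (\ref{eq030202}) coincides with $\bar{q}^{\varepsilon,\text{NPPD}}$ in (\ref{eq030402}). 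The transition density $p^\varepsilon(\boldsymbol{x}_T,T\vert\boldsymbol{x},t)$ does not depend on the initial law, by the Markov property. Consequently the kernel $\tilde{\mu}^\varepsilon_{\boldsymbol{x},t}$ of (\ref{eq030207}), and hence $\mathscr{L}^{\varepsilon,*}_{\boldsymbol{x},t}$, is literally unchanged. Meanwhile $\bar{\mu}^\varepsilon_{\boldsymbol{x},t}$ of (\ref{eq030103}) turns into $\bar{\mu}^{\varepsilon,\text{NPPD}}_{\boldsymbol{x},t}$ of (\ref{eq030405}), so that $\mathscr{K}^{\varepsilon,*}$ becomes $\mathscr{N}^{\varepsilon,*}$.

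For part (a), I would invoke Proposition \ref{theo030201}(b), whose proof uses only (\ref{eq030001}), (\ref{eq030002}), (\ref{eq030005}), (\ref{eq030006}). These hold for the conditional family $p^\varepsilon(\cdot,\cdot\vert\boldsymbol{x}_0,0)$ through (\ref{eq030003})--(\ref{eq030004}). This gives (\ref{eq030205}) and (\ref{eq030403}) directly. For the initial condition, at $t=0$ the factor $p^\varepsilon(\boldsymbol{x},0\vert\boldsymbol{x}_0,0)=\delta(\boldsymbol{x}-\boldsymbol{x}_0)$ forces
\[
q^{\varepsilon,\text{NPPD}}(\boldsymbol{x},0;\boldsymbol{x}_T,T;\boldsymbol{x}_0,0)=\delta(\boldsymbol{x}-\boldsymbol{x}_0)\,\frac{p^\varepsilon(\boldsymbol{x}_T,T\vert\boldsymbol{x}_0,0)}{p^\varepsilon(\boldsymbol{x}_T,T\vert\boldsymbol{x}_0,0)}=\delta(\boldsymbol{x}-\boldsymbol{x}_0).
\]
Corollary \ref{theo030203}, i.e. uniqueness of the solution of the forward recursion given its initial datum, then identifies this as the law of $\tilde{\boldsymbol{x}}^\varepsilon(t)$ started at $\boldsymbol{x}_0$.

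For part (b), the analogous route is Proposition \ref{theo030201}(a) with the substituted kernel, yielding (\ref{eq030203}) and (\ref{eq030404}). The Chapman--Kolmogorov identity follows from Corollary \ref{theo030202}(b) with $\bar{p}^\varepsilon(\boldsymbol{x},t)=p^\varepsilon(\boldsymbol{x},T-t\vert\boldsymbol{x}_0,0)$. For the initial value, note that $t=0$ corresponds to forward time $T$, where $p^\varepsilon(\boldsymbol{x}_T,T\vert\boldsymbol{x},T)=\delta(\boldsymbol{x}_T-\boldsymbol{x})$ by (\ref{eq030005}). This gives $\bar{q}^{\varepsilon,\text{NPPD}}(\boldsymbol{x},0)=\delta(\boldsymbol{x}-\boldsymbol{x}_T)$, and in fact the same holds on $[0,\Delta]$ by the step-function convention. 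I would then check that $\bar{\mu}^{\varepsilon,\text{NPPD}}_{\boldsymbol{x},t}$ is a probability measure wherever $p^\varepsilon(\boldsymbol{x},T-t\vert\boldsymbol{x}_0,0)>0$, using the $0/0=0$ convention of Remark \ref{rem030101}. Its total mass equals
\[
\frac{\int p^\varepsilon(\boldsymbol{x}-\varepsilon\boldsymbol{z}',T-t-\varepsilon\vert\boldsymbol{x}_0,0)\,\mu_{\boldsymbol{x}-\varepsilon\boldsymbol{z}'}(\mathrm{d}\boldsymbol{z}')}{p^\varepsilon(\boldsymbol{x},T-t\vert\boldsymbol{x}_0,0)}=1
\]
by the forward equation (\ref{eq030004}). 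With this, the Kolmogorov forward equation of $\bar{\boldsymbol{x}}^{\varepsilon,\text{NPPD}}$ in (\ref{eq030406}) coincides with (\ref{eq030404}), and Corollary \ref{theo030102} gives the identification of laws.

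The main obstacle is the bookkeeping of the time grid for the reversed objects rather than any analytic estimate. Reverse times $\Delta+m\varepsilon$ map to forward grid times $T-\Delta-m\varepsilon=(\lfloor T/\varepsilon\rfloor-m)\varepsilon$, and one step of reversed time corresponds to moving from index $n+1$ to index $n$. This is exactly where the regrouping in Proposition \ref{theo030201}(a) enters, and I would simply cite it rather than recompute. A second point needing care is the singular product of $\delta(\boldsymbol{x}-\boldsymbol{x}_0)$ with a smooth factor at the endpoint $t=0$ in part (a). I would treat it weakly, by integrating against a test function.
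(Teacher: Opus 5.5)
Your proposal is correct and follows the route the paper intends with its ``Similarly, we obtain'': specialise Proposition \ref{theo030201} and Corollaries \ref{theo030202}--\ref{theo030203} to the conditional family $p^\varepsilon(\boldsymbol{x},t\vert\boldsymbol{x}_0,0)$, noting that $\tilde{\mu}^\varepsilon$ is unchanged while $\bar{\mu}^\varepsilon$ becomes $\bar{\mu}^{\varepsilon,\text{NPPD}}$. One minor citation slip: the final identification in (b) is the analogue of Corollary \ref{theo030202}(a), which starts from $\delta(\boldsymbol{x}-\boldsymbol{x}_T)$, not Corollary \ref{theo030102}, which would start $\bar{\boldsymbol{x}}^{\varepsilon,\text{NPPD}}$ from the density $p^\varepsilon(\boldsymbol{x},T\vert\boldsymbol{x}_0,0)$; your own matching of the forward recursion and the initial datum already supplies the needed argument.
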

\begin{remark}\label{rem030401}
	In fact, the density $q^{\varepsilon,\text{NPPD}}(\boldsymbol{x},t;\boldsymbol{x}_T,T;\boldsymbol{x}_0,0)$ also serves as the conditional probability density of the family $\{q^\varepsilon(\boldsymbol{x},t;\boldsymbol{x}_T,T)\}_{t\in[0,T]}$, since it satisfies the Chapman-Kolmogorov-type equation:
	\begin{equation*}
		\int_{\mathbb{R}^d}{q^\varepsilon(\boldsymbol{x}_0,0;\boldsymbol{x}_T,T)q^{\varepsilon,\text{NPPD}}(\boldsymbol{x},t;\boldsymbol{x}_T,T;\boldsymbol{x}_0,0)}\lambda(\mathrm{d}\boldsymbol{x}_0)=q^\varepsilon(\boldsymbol{x},t;\boldsymbol{x}_T,T).
	\end{equation*}
\end{remark}

\section{Prehistorical Description of Optimal Fluctuations on Finite Time Intervals}\label{sec04} 
 For given $\boldsymbol{x}_0,\,\boldsymbol{x}_T,\,T$, assume that the conditions in Theorem \ref{theo020101}, Proposition \ref{theo020202}(e), Lemmas \ref{theo020203} and \ref{theo020204} hold. Under these conditions, the NOP that connects $\boldsymbol{x}_0$ with $\boldsymbol{x}_T$ in the time span $T$ is unique. Define $\bar{\boldsymbol{x}}^{0}(t)\triangleq\boldsymbol{\phi}_{\text{NOP}}(T-t;\boldsymbol{x}_T,T;\boldsymbol{x}_0)$ for $t\in[0,T]$. It follows from Eqs. (\ref{eq020206}) and (\ref{eq020208}) that for any $T^*\in(0,T)$, $\bar{\boldsymbol{x}}^{0}(t)$ satisfies
\begin{equation}\label{eq040101}
	\dot{\bar{\boldsymbol{x}}}^{0}(t)=\bar{\boldsymbol{b}}(\bar{\boldsymbol{x}}^{0}(t),t),\;\;t\in[0,T^*],
\end{equation}
with the initial condition $\bar{\boldsymbol{x}}^{0}(0)=\boldsymbol{x}_T$, where
\begin{equation*}
	\begin{aligned}
		\bar{\boldsymbol{b}}(\boldsymbol{x},t)&\triangleq-\nabla_{\boldsymbol{\alpha}}H(\boldsymbol{x},\nabla_{\boldsymbol{x}}S(\boldsymbol{x},T-t\vert\boldsymbol{x}_0))\\
		&=-\frac{\int_{\mathbb{R}^d}{\boldsymbol{z}e^{\boldsymbol{z}\cdot\nabla_{\boldsymbol{x}}S(\boldsymbol{x},T-t\vert\boldsymbol{x}_0)}\mu_{\boldsymbol{x}}(\mathrm{d}\boldsymbol{z})}}{e^{H(\boldsymbol{x},\nabla_{\boldsymbol{x}}S(\boldsymbol{x},T-t\vert\boldsymbol{x}_0))}}\\
		&=-\frac{\int_{\mathbb{R}^d}{\boldsymbol{z}e^{\boldsymbol{z}\cdot\nabla_{\boldsymbol{x}}S(\boldsymbol{x},T-t\vert\boldsymbol{x}_0)}\mu_{\boldsymbol{x}}(\mathrm{d}\boldsymbol{z})}}{e^{\frac{\partial}{\partial{t}}S(\boldsymbol{x},T-t\vert\boldsymbol{x}_0)}}.
	\end{aligned}
\end{equation*}

Now consider the left-continuous stochastic step function $\bar{\boldsymbol{x}}^{\varepsilon,\text{NPPD}}(t)$ defined in Eq. (\ref{eq030406}), with the initial condition $\bar{\boldsymbol{x}}^{\varepsilon,\text{NPPD}}(0)=\boldsymbol{x}_T$. Denote
\begin{equation*}
	\bar{\boldsymbol{b}}^{\varepsilon}(\boldsymbol{x},t)\triangleq\int_{\mathbb{R}^d}{\boldsymbol{z}\bar{\mu}^{\varepsilon,\text{NPPD}}_{\boldsymbol{x},t}(\mathrm{d}\boldsymbol{z})}.
\end{equation*}
For any $T^{*}\in(0,T)$ and a sufficiently small $\delta_{0}$, let $\bar{\Sigma}_{\delta_{0},[0,T^{*}]}\triangleq\bigcup_{t\in[0,T^{*}]}B_{\delta_{0}}(\bar{\boldsymbol{x}}^{0}(t))$. Obviously, $\bar{\Sigma}_{\delta_{0},[0,T^{*}]}=\Sigma_{\delta_{0},[T-T^{*},T]}$ (as defined in Lemma \ref{theo020203}). The following proposition of the type of the law of large numbers can be established. Its proof is given in \ref{secA2}.

\begin{proposition}\label{theo040101}
	Suppose that the conditions in Theorem \ref{theo020101}, Proposition \ref{theo020202}(e) and Lemma \ref{theo020203} hold. We further assume that for any $T^*\in(0,T)$, there exists a constant $\delta_{0}>0$ such that:
	\begin{itemize}
		\item[(a)] for sufficiently small $\varepsilon$, $\sup_{(\boldsymbol{x},t)\in\bar{\Sigma}_{\delta_{0},[0,T^{*}]}\times[0,T^{*}]}\int_{\mathbb{R}^d}{\vert\boldsymbol{z}\vert\bar{\mu}^{\varepsilon,\text{NPPD}}_{\boldsymbol{x},t}(\mathrm{d}\boldsymbol{z})}<G_1<\infty$;
		\item[(b)] $\bar{\boldsymbol{b}}^{\varepsilon}(\boldsymbol{x},t)\to\bar{\boldsymbol{b}}(\boldsymbol{x},t)$ as $\varepsilon\to{0}$, uniformly for $(\boldsymbol{x},t)\in\bar{\Sigma}_{\delta_{0},[0,T^{*}]}\times[0,T^{*}]$;
		\item[(c)] there exists a constant $\varkappa_1\in(0,1]$ such that for sufficiently small $\varepsilon$,
		\begin{equation*}
			\sup_{(\boldsymbol{x},t)\in\bar{\Sigma}_{\delta_{0},[0,T^{*}]}\times[0,T^{*}]}\int_{\mathbb{R}^d}\left\vert\boldsymbol{z}-\bar{\boldsymbol{b}}^{\varepsilon}(\boldsymbol{x},t)\right\vert^{1+\varkappa_1}\bar{\mu}^{\varepsilon,\text{NPPD}}_{\boldsymbol{x},t}(\mathrm{d}\boldsymbol{z})<G_2<\infty.
		\end{equation*}
	\end{itemize}
	Then, for any $T^*\in(0,T)$, there exists a constant $\delta_0$ so that for every $\delta<\delta_{0}$,
	\begin{equation}\label{eq040102}
		\lim_{\varepsilon\to{0}}P^\varepsilon\left(\sup_{t\in[0,T^{*}]}\left\vert\bar{\boldsymbol{x}}^{\varepsilon,\text{NPPD}}(t)-\bar{\boldsymbol{x}}^{0}(t)\right\vert>\delta\right)=0.
	\end{equation} 
\end{proposition}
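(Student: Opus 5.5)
We prove a law of large numbers for the time-inhomogeneous Markov chain $\bar{\boldsymbol{x}}^{\varepsilon,\text{NPPD}}$ by comparing it with the ODE (\ref{eq040101}), through a Doob-type decomposition plus a Gronwall argument localized by a stopping time. Fix $T^*\in(0,T)$. Take $\delta_0$ small enough that the hypotheses (a)--(c) hold on $\bar{\Sigma}_{\delta_0,[0,T^*]}\times[0,T^*]$. By Proposition \ref{theo020202}(e), $S(\boldsymbol{x},T-t\vert\boldsymbol{x}_0)$ is $\mathcal{C}^{k+1}$ there, so $\bar{\boldsymbol{b}}$ is continuous in $t$ and Lipschitz in $\boldsymbol{x}$ with some constant $\Lambda$, uniformly on this set. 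Fix $\delta<\delta_0$. Write $t_m=\Delta+m\varepsilon$, $\boldsymbol{X}_m=\bar{\boldsymbol{x}}^{\varepsilon,\text{NPPD}}(t_m)$, $\boldsymbol{X}_0=\boldsymbol{x}_T$. Define the exit index $\tau=\min\{m:|\boldsymbol{X}_m-\bar{\boldsymbol{x}}^0(t_m)|>\delta\}$. Before $\tau$ the chain stays in $\bar{\Sigma}_{\delta_0,[0,T^*]}$, so the uniform moment bounds apply to every step.

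For $m<\tau$ decompose each increment as
\begin{equation*}
\boldsymbol{X}_{m+1}-\boldsymbol{X}_m=\varepsilon\bar{\boldsymbol{b}}^{\varepsilon}(\boldsymbol{X}_m,t_m)+\varepsilon\boldsymbol{\xi}_m,
\end{equation*}
where $\boldsymbol{\xi}_m=\bar{\boldsymbol{Z}}^{\varepsilon,\text{NPPD}}_m-\bar{\boldsymbol{b}}^\varepsilon(\boldsymbol{X}_m,t_m)$ is a martingale difference for the natural filtration. Let $\boldsymbol{M}_n=\varepsilon\sum_{m<n\wedge\tau}\boldsymbol{\xi}_m$; this is a martingale. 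Compare with $\bar{\boldsymbol{x}}^0(t_{n})=\bar{\boldsymbol{x}}^0(t_0)+\int_{t_0}^{t_n}\bar{\boldsymbol{b}}(\bar{\boldsymbol{x}}^0(s),s)\,\mathrm{d}s$. Note that $|\bar{\boldsymbol{x}}^0(t_0)-\boldsymbol{x}_T|=O(\Delta)=O(\varepsilon)$. For $n\le\tau$ and $t_n\le T^*$ this gives
\begin{equation*}
e_n\triangleq|\boldsymbol{X}_n-\bar{\boldsymbol{x}}^0(t_n)|\le O(\varepsilon)+T^*\eta_\varepsilon+\Lambda\varepsilon\sum_{m<n}e_m+\sup_{j\le n}|\boldsymbol{M}_j|.
\end{equation*}
The term $\eta_\varepsilon$ collects three errors. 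The first is $\sup|\bar{\boldsymbol{b}}^\varepsilon-\bar{\boldsymbol{b}}|$, which tends to $0$ by hypothesis (b). The second is the Riemann-sum error of $\bar{\boldsymbol{b}}(\bar{\boldsymbol{x}}^0(s),s)$, which is $O(\varepsilon)$ by uniform continuity. The third is the difference between the left-continuous step function and its grid values, also $O(\varepsilon)$: each single jump is $\varepsilon|\bar{\boldsymbol{Z}}_m|$, and by hypothesis (a) and Markov's inequality its maximum over $O(\varepsilon^{-1})$ steps is small in probability. Discrete Gronwall then gives
\begin{equation*}
\sup_{n\le\tau,\,t_n\le T^*}e_n\le\bigl(O(\varepsilon)+T^*\eta_\varepsilon+\sup_j|\boldsymbol{M}_j|\bigr)e^{\Lambda T^*}.
\end{equation*}

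The main obstacle is controlling $\sup_j|\boldsymbol{M}_j|$ using only a $(1+\varkappa_1)$-th moment, since a second moment may not exist. We use the von Bahr--Esseen inequality for martingale differences with exponent $p=1+\varkappa_1\in(1,2]$, applied componentwise, together with Doob's $L^p$ maximal inequality. Together with hypothesis (c) (and $|\boldsymbol{\xi}|^p\le$ the given moment), they give
\begin{equation*}
E\sup_j|\boldsymbol{M}_j|^{p}\le C_p\,\varepsilon^{p}\sum_{m<T^*/\varepsilon}E|\boldsymbol{\xi}_m|^p\mathbf{1}_{m<\tau}\le C_pG_2T^*\varepsilon^{\varkappa_1}\to0 .
\end{equation*}
If $\varkappa_1=1$ is not assumed, a truncation argument is a fallback: split $\boldsymbol{\xi}_m$ at level $\varepsilon^{-1/2}$, bound the large part in $L^1$ via (c), and bound the recentred small part in $L^2$.

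Hence the right-hand side of the Gronwall bound tends to $0$ in probability. On the event where it is less than $\delta$, the exit index satisfies $\tau>T^*/\varepsilon$, because the bound holds up to and including $\tau$ and would contradict $e_\tau>\delta$. On that event, $\sup_{t\in[0,T^*]}|\bar{\boldsymbol{x}}^{\varepsilon,\text{NPPD}}(t)-\bar{\boldsymbol{x}}^0(t)|\le\delta$ up to the $O(\varepsilon)$ interpolation error on $[0,\Delta]$ and within steps. This yields (\ref{eq040102}).
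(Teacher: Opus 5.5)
Your proposal is correct and is essentially the paper's own argument in Appendix~\ref{secA2}. Both use a stopped chain; split the error into the initial $O(\Delta)$ term, the martingale part, the $\bar{\boldsymbol{b}}^{\varepsilon}-\bar{\boldsymbol{b}}$ term, the Lipschitz term and the Riemann-sum term; apply discrete Gronwall; and use a $(1+\varkappa_1)$-moment maximal inequality to get $P^{\varepsilon}(\sup_j|\boldsymbol{M}_j|>\eta)=O(\varepsilon^{\varkappa_1})$. The paper proves its own von Bahr--Esseen-type bound in Lemma~\ref{theoA20102} and stops at the first exit from the $\delta_0$-tube rather than when the error first exceeds $\delta$, but these differences are cosmetic.

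The one slip is your ``third error''. It is not needed: the left-continuous step function only takes grid values, so the off-grid discrepancy is just the deterministic $O(\varepsilon)$ oscillation of $\bar{\boldsymbol{x}}^{0}$. As justified, it would also fail: hypothesis (a) with Markov's inequality and a union bound over $O(\varepsilon^{-1})$ steps only gives $P^{\varepsilon}(\max_m\varepsilon|\bar{\boldsymbol{Z}}^{\varepsilon,\text{NPPD}}_m|>\eta)\le T^{*}G_1/\eta$, which does not vanish; you would need hypothesis (c).
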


Define $\tilde{\boldsymbol{x}}^{0}(t)\triangleq\boldsymbol{\phi}_{\text{NOP}}(t;\boldsymbol{x}_T,T;\boldsymbol{x}_0)$ for $t\in[0,T]$. By Eqs. (\ref{eq020207}) and (\ref{eq020209}), for any $T^*\in(0,T)$, $\tilde{\boldsymbol{x}}^{0}(t)$ satisfies
\begin{equation}\label{eq040103}
	\dot{\tilde{\boldsymbol{x}}}^{0}(t)=\tilde{\boldsymbol{b}}(\tilde{\boldsymbol{x}}^{0}(t),t),
\end{equation}
with the initial condition $\tilde{\boldsymbol{x}}^{0}(0)=\boldsymbol{x}_0$, where
\begin{equation*}
	\begin{aligned}
		\tilde{\boldsymbol{b}}(\boldsymbol{x},t)&\triangleq\nabla_{\boldsymbol{\alpha}}H(\boldsymbol{x},-\nabla_{\boldsymbol{x}}S(\boldsymbol{x}_T,T-t\vert\boldsymbol{x}))\\
		&=\frac{\int_{\mathbb{R}^d}{\boldsymbol{z}e^{-\boldsymbol{z}\cdot\nabla_{\boldsymbol{x}}S(\boldsymbol{x}_T,T-t\vert\boldsymbol{x})}\mu_{\boldsymbol{x}}(\mathrm{d}\boldsymbol{z})}}{e^{H(\boldsymbol{x},-\nabla_{\boldsymbol{x}}S(\boldsymbol{x}_T,T-t\vert\boldsymbol{x}))}}\\
		&=\frac{\int_{\mathbb{R}^d}{\boldsymbol{z}e^{-\boldsymbol{z}\cdot\nabla_{\boldsymbol{x}}S(\boldsymbol{x}_T,T-t\vert\boldsymbol{x})}\mu_{\boldsymbol{x}}(\mathrm{d}\boldsymbol{z})}}{e^{\frac{\partial}{\partial{t}}S(\boldsymbol{x}_T,T-t\vert\boldsymbol{x})}}.
	\end{aligned}
\end{equation*}

Similarly, let $\tilde{\boldsymbol{x}}^{\varepsilon,\text{NPPD}}(t)\triangleq\tilde{\boldsymbol{x}}^{\varepsilon}(t)$ (i.e., the right-continuous stochastic step function defined in Eq. (\ref{eq030208})) for $t\in[0,T]$, with the initial condition $\tilde{\boldsymbol{x}}^{\varepsilon,\text{NPPD}}(t)=\boldsymbol{x}_0$. Define
\begin{equation*}
	\tilde{\boldsymbol{b}}^{\varepsilon}(\boldsymbol{x},t)\triangleq\int_{\mathbb{R}^d}{\boldsymbol{z}\tilde{\mu}^{\varepsilon}_{\boldsymbol{x},t}(\mathrm{d}\boldsymbol{z})}
\end{equation*}
(For simplicity, we here use the notation $\Sigma_{\delta_{0},[0,T^{*}]}$ (as defined in Lemma \ref{theo020204}), since $\tilde{\Sigma}_{\delta_{0},[0,T^{*}]}\triangleq\bigcup_{t\in[0,T^{*}]}B_{\delta_{0}}(\tilde{\boldsymbol{x}}^{0}(t))\equiv\Sigma_{\delta_{0},[0,T^{*}]}$.) The following result is indicative of  the law of large numbers. Its proof is largely analogous to that of Proposition \ref{theo040101} and is therefore omitted.
\begin{proposition}\label{theo040102}
	In addition to the conditions in Theorem \ref{theo020101}, Proposition \ref{theo020202}(e) and Lemma \ref{theo020204}, we assume that for any $T^*\in(0,T)$, there exists a constant $\delta_{0}>0$ such that:
	\begin{itemize}
		\item[(a)] for sufficiently small $\varepsilon$, $\sup_{(\boldsymbol{x},t)\in{\Sigma}_{\delta_{0},[0,T^{*}]}\times[0,T^{*}]}\int_{\mathbb{R}^d}{\vert\boldsymbol{z}\vert\tilde{\mu}^{\varepsilon}_{\boldsymbol{x},t}(\mathrm{d}\boldsymbol{z})}<G_3<\infty$;
		
		\item[(b)] $\tilde{\boldsymbol{b}}^{\varepsilon}(\boldsymbol{x},t)\to\tilde{\boldsymbol{b}}(\boldsymbol{x},t)$ as $\varepsilon\to{0}$, uniformly for $(\boldsymbol{x},t)\in{\Sigma}_{\delta_{0},[0,T^{*}]}\times[0,T^{*}]$;
		
		\item[(c)] there exists a constant $\varkappa_2\in(0,1]$ such that for sufficiently small $\varepsilon$,
		\begin{equation*}
			\sup_{(\boldsymbol{x},t)\in{\Sigma}_{\delta_{0},[0,T^{*}]}\times[0,T^{*}]}\int_{\mathbb{R}^d}\left\vert\boldsymbol{z}-\tilde{\boldsymbol{b}}^{\varepsilon}(\boldsymbol{x},t)\right\vert^{1+\varkappa_2}\tilde{\mu}^{\varepsilon}_{\boldsymbol{x},t}(\mathrm{d}\boldsymbol{z})<G_4<\infty.
		\end{equation*}
	\end{itemize}
	Then, for any $T^*\in(0,T)$, there exists a constant $\delta_0$ so that for every $\delta<\delta_{0}$,
	\begin{equation}\label{eq040104}
		\lim_{\varepsilon\to{0}}P\left(\sup_{t\in[0,T^{*}]}\left\vert\tilde{\boldsymbol{x}}^{\varepsilon,\text{NPPD}}(t)-\tilde{\boldsymbol{x}}^{0}(t)\right\vert>\delta\right)=0.
	\end{equation}
\end{proposition}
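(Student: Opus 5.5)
We prove Proposition \ref{theo040102} as a law of large numbers for the non-stationary Markov chain $\tilde{\boldsymbol{x}}^{\varepsilon,\text{NPPD}}$, in the same way as Proposition \ref{theo040101}: a drift--martingale decomposition, a Gronwall comparison with the ODE (\ref{eq040103}), and a stopping time that keeps the chain inside $\Sigma_{\delta_0,[0,T^*]}$, where the uniform hypotheses (a)--(c) hold.

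\textbf{Setup and decomposition.} Fix $T^*\in(0,T)$ and take $\delta_0$ so small that Proposition \ref{theo020202}(e), Lemma \ref{theo020204} and (a)--(c) all hold on $\Sigma_{\delta_0,[0,T^*]}\times[0,T^*]$. Then $\tilde{\boldsymbol{b}}(\boldsymbol{x},t)=\nabla_{\boldsymbol{\alpha}}H(\boldsymbol{x},-\nabla_{\boldsymbol{x}}S(\boldsymbol{x}_T,T-t\vert\boldsymbol{x}))$ is $\mathcal{C}^1$ there, hence Lipschitz in $\boldsymbol{x}$ with some constant $\ell$ and bounded. Fix $\delta<\delta_0$ and define the exit time
\begin{equation*}
\tau^\varepsilon\triangleq\inf\{t:\vert\tilde{\boldsymbol{x}}^{\varepsilon}(t)-\tilde{\boldsymbol{x}}^0(t)\vert>\delta\}\wedge T^*.
\end{equation*}
Up to $\tau^\varepsilon$, write
\begin{equation*}
\tilde{\boldsymbol{x}}^\varepsilon(n\varepsilon)=\boldsymbol{x}_0+\varepsilon\sum_{k<n}\tilde{\boldsymbol{b}}^\varepsilon(\tilde{\boldsymbol{x}}^\varepsilon(k\varepsilon),k\varepsilon)+\boldsymbol{M}_n,\qquad \boldsymbol{M}_n\triangleq\varepsilon\sum_{k<n}\boldsymbol{\xi}_k,
\end{equation*}
where $\boldsymbol{\xi}_k=\tilde{\boldsymbol{Z}}^\varepsilon_k-\tilde{\boldsymbol{b}}^\varepsilon(\tilde{\boldsymbol{x}}^\varepsilon(k\varepsilon),k\varepsilon)$ are martingale differences with respect to $\tilde{\mathcal{F}}$. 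Assumption (a) makes $\tilde{\boldsymbol{b}}^\varepsilon$ well defined and finite.

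\textbf{Martingale estimate (main obstacle).} Hypothesis (c) gives only a $(1+\varkappa_2)$-th moment, so we cannot use Doob's $L^2$ inequality. We use instead the von Bahr--Esseen inequality, which holds for martingale differences when $1\le p\le 2$, together with Doob's $L^p$ maximal inequality applied to the stopped martingale $\boldsymbol{M}_{n\wedge\tau^\varepsilon/\varepsilon}$ (componentwise if needed). Setting $p=1+\varkappa_2$, this gives
\begin{equation*}
E\sup_{n\le T^*/\varepsilon}\vert\boldsymbol{M}_{n\wedge\tau^\varepsilon/\varepsilon}\vert^{p}\le C\varepsilon^{p}\sum_{k<T^*/\varepsilon}E\vert\boldsymbol{\xi}_k\vert^p\mathbf{1}_{k\varepsilon<\tau^\varepsilon}\le CG_4T^*\varepsilon^{\varkappa_2}\to0.
\end{equation*}
The indicator matters: before $\tau^\varepsilon$ the chain stays in $\Sigma_{\delta_0,[0,T^*]}$, so the uniform bound $G_4$ from (c) applies to each $E\vert\boldsymbol{\xi}_k\vert^p$. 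Markov's inequality then shows that $\sup_n\vert\boldsymbol{M}_{n\wedge\tau^\varepsilon/\varepsilon}\vert\to0$ in probability.

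\textbf{Gronwall and conclusion.} For $t\le\tau^\varepsilon$ we split the drift error as
\begin{equation*}
\vert\tilde{\boldsymbol{b}}^\varepsilon(\boldsymbol{x},s)-\tilde{\boldsymbol{b}}(\tilde{\boldsymbol{x}}^0(s),s)\vert\le\eta(\varepsilon)+\ell\vert\boldsymbol{x}-\tilde{\boldsymbol{x}}^0(s)\vert,\qquad \eta(\varepsilon)\triangleq\sup_{\Sigma_{\delta_0,[0,T^*]}\times[0,T^*]}\vert\tilde{\boldsymbol{b}}^\varepsilon-\tilde{\boldsymbol{b}}\vert,
\end{equation*}
where $\eta(\varepsilon)\to0$ by (b). Two further error terms arise: the Riemann-sum error from replacing $s$ by $\lfloor s/\varepsilon\rfloor\varepsilon$, which is $O(\varepsilon)$ because $\tilde{\boldsymbol{x}}^0$ is $\mathcal{C}^1$ and $\tilde{\boldsymbol{b}}$ is bounded; and the step-function discretisation error, which is bounded by $\varepsilon G_3$ plus the martingale term. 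Combining these, $e(t)=\vert\tilde{\boldsymbol{x}}^\varepsilon(t)-\tilde{\boldsymbol{x}}^0(t)\vert$ satisfies
\begin{equation*}
e(t)\le \eta(\varepsilon)T^*+O(\varepsilon)+\sup_n\vert\boldsymbol{M}_{n\wedge\tau^\varepsilon/\varepsilon}\vert+\ell\int_0^te(s)\mathrm{d}s,\qquad t\le\tau^\varepsilon.
\end{equation*}
The discrete Gronwall inequality then bounds $\sup_{t\le\tau^\varepsilon}e(t)$ by $e^{\ell T^*}$ times a quantity that tends to $0$ in probability. In particular $P(\sup_{t\le\tau^\varepsilon}e(t)>\delta/2)\to0$.

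Finally we remove the stopping time. On the event $\{\sup_{t\le\tau^\varepsilon}e(t)\le\delta/2\}$ the chain cannot exit before $T^*$ except by a single jump at the exit step. Such a jump has size $\varepsilon\vert\tilde{\boldsymbol{Z}}^\varepsilon_k\vert$, and its probability of exceeding $\delta/2$ is at most $2\varepsilon G_3/\delta$ by Markov's inequality and (a), which is $O(\varepsilon)$. Hence $\tau^\varepsilon=T^*$ with probability tending to one, and (\ref{eq040104}) follows.
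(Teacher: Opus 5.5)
Your proposal follows essentially the same route as the paper's proof of Proposition \ref{theo040101} in Appendix B, carried over to the forward chain $\tilde{\boldsymbol{x}}^{\varepsilon,\text{NPPD}}$, which is what the paper means by ``largely analogous''. The shared steps are: localisation by a stopping time, a drift--martingale decomposition, a $(1+\varkappa_2)$-moment maximal bound (von Bahr--Esseen plus Doob is what Lemma \ref{theoA20102} proves by hand), discrete Gronwall, and removal of the stopping time. The differences are cosmetic: you stop at a $\delta$-deviation from $\tilde{\boldsymbol{x}}^{0}$ rather than at exit from the $\delta_0$-tube, and the paper's $\Theta_1$ term disappears because there is no initial offset $\Delta$.

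One caveat: your final ``single jump'' step should be dropped. It is unnecessary, and as justified it does not work, since the exit index is random and the per-step bound $2\varepsilon G_3/\delta$ summed over up to $T^*/\varepsilon$ steps is $O(1)$. The conclusion follows without it, as in the paper's final step $\lceil (T^*-\Delta)/\varepsilon\rceil<\nu$. The stopped martingale already contains the exit increment, so your Gronwall bound holds at $t=\tau^\varepsilon$ itself. By right-continuity, $\vert\tilde{\boldsymbol{x}}^{\varepsilon}(\tau^\varepsilon)-\tilde{\boldsymbol{x}}^0(\tau^\varepsilon)\vert\geq\delta$ whenever $\tau^\varepsilon<T^*$. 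Together these force $\tau^\varepsilon=T^*$ on the good event.
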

Combining Corollary \ref{theo030401} with Propositions \ref{theo040101} and \ref{theo040102}, we obtain:
\begin{corollary}\label{theo040103}
	If the assumptions in Theorem \ref{theo020101}, Proposition \ref{theo020202}(e), Lemmas \ref{theo020203}, \ref{theo020204}, and Propositions \ref{theo040101}, \ref{theo040102} are satisfied, then the non-stationary prehistory probability density $q^{\varepsilon,\text{NPPD}}(\boldsymbol{x},t;\boldsymbol{x}_T,T;\boldsymbol{x}_0,0)$ concentrates on the non-stationary optimal path $\boldsymbol{\phi}_{\text{NOP}}(t;\boldsymbol{x}_T,T;\boldsymbol{x}_0)$ as $\varepsilon\to{0}$, uniformly for $t\in[0,T]$. This provides the complete prehistorical description of optimal fluctuations in the non-stationary setting.
\end{corollary}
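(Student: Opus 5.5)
The plan is to read Corollary \ref{theo040103} as a concentration statement about the densities $q^{\varepsilon,\text{NPPD}}(\cdot,t;\boldsymbol{x}_T,T;\boldsymbol{x}_0,0)$. Concretely, I aim to show that for every $\delta>0$
\begin{equation*}
\lim_{\varepsilon\to0}\sup_{t\in[0,T]}\int_{\mathbb{R}^d\setminus B_\delta(\boldsymbol{\phi}_{\text{NOP}}(t;\boldsymbol{x}_T,T;\boldsymbol{x}_0))}q^{\varepsilon,\text{NPPD}}(\boldsymbol{x},t;\boldsymbol{x}_T,T;\boldsymbol{x}_0,0)\lambda(\mathrm{d}\boldsymbol{x})=0 .
\end{equation*}
Corollary \ref{theo030401} supplies two probabilistic representations of the same density, one run forward from $\boldsymbol{x}_0$ and one run backward from $\boldsymbol{x}_T$:
\begin{itemize}
\item[(a)] $q^{\varepsilon,\text{NPPD}}(\cdot,t)$ is the law of $\tilde{\boldsymbol{x}}^{\varepsilon,\text{NPPD}}(t)$ started at $\boldsymbol{x}_0$;
\item[(b)] $q^{\varepsilon,\text{NPPD}}(\cdot,t)=\bar{q}^{\varepsilon,\text{NPPD}}(\cdot,T-t)$ is the law of $\bar{\boldsymbol{x}}^{\varepsilon,\text{NPPD}}(T-t)$ started at $\boldsymbol{x}_T$.
\end{itemize}
Each law-of-large-numbers result only controls a proper subinterval: the forward process on $[0,T^*]$ and the reversed process on $[0,T^*]$, which in original time is $[T-T^*,T]$. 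The idea is to pick $T^*>T/2$ so the two windows cover $[0,T]$.

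The steps, in order, are as follows.
\begin{itemize}
\item[(1)] Fix $\delta>0$ and choose $T^*\in(T/2,T)$. Let $\delta_0$ be the smaller of the two constants provided by Propositions \ref{theo040101} and \ref{theo040102} for this $T^*$, and assume without loss of generality that $\delta<\delta_0$.
\item[(2)] For $t\in[0,T^*]$, apply (a). The mass of $q^{\varepsilon,\text{NPPD}}(\cdot,t)$ outside $B_\delta(\tilde{\boldsymbol{x}}^0(t))$ equals $P(|\tilde{\boldsymbol{x}}^{\varepsilon,\text{NPPD}}(t)-\tilde{\boldsymbol{x}}^0(t)|\ge\delta)$. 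This is bounded by the supremum probability in (\ref{eq040104}) (with $\delta$ replaced by $\delta/2$ to handle the non-strict inequality), so it tends to $0$ uniformly in $t\in[0,T^*]$. Here $\tilde{\boldsymbol{x}}^0(t)=\boldsymbol{\phi}_{\text{NOP}}(t)$.
\item[(3)] For $t\in[T-T^*,T]$, set $s=T-t\in[0,T^*]$ and apply (b). Because $\bar{\boldsymbol{x}}^0(s)=\boldsymbol{\phi}_{\text{NOP}}(T-s)=\boldsymbol{\phi}_{\text{NOP}}(t)$, the mass outside $B_\delta(\boldsymbol{\phi}_{\text{NOP}}(t))$ is at most the probability in (\ref{eq040102}). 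Again this tends to $0$ uniformly.
\item[(4)] Since $[0,T^*]\cup[T-T^*,T]=[0,T]$, taking the maximum of the two bounds gives uniform concentration on all of $[0,T]$.
\end{itemize}

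The main obstacle is making the identification of laws at the level of time indices rigorous. On the reversed side there are two issues:
\begin{itemize}
\item The step function $\bar{\boldsymbol{x}}^{\varepsilon,\text{NPPD}}$ is left-continuous with offset $\Delta$, whereas $q^{\varepsilon,\text{NPPD}}$ is right-continuous in $t$. I must check that $\bar{q}^{\varepsilon,\text{NPPD}}(\cdot,T-t)$ coincides with the law of $\bar{\boldsymbol{x}}^{\varepsilon,\text{NPPD}}(T-t)$ under the conventions (\ref{eq030203}) and (\ref{eq030205}), possibly up to a one-step shift.
\item Any such one-step discrepancy has to be absorbed. I would do this by noting that $\bar{\boldsymbol{x}}^0$ is uniformly continuous and by using the supremum over $t$ in (\ref{eq040102}).
\end{itemize}
There are also degenerate endpoints. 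At $t=0$ and $t=T$ the densities are the Dirac masses at $\boldsymbol{x}_0$ and $\boldsymbol{x}_T$ respectively, and these equal $\boldsymbol{\phi}_{\text{NOP}}(0)$ and $\boldsymbol{\phi}_{\text{NOP}}(T)$, so the claim holds trivially there. Taken together, this shows that $q^{\varepsilon,\text{NPPD}}$ concentrates on the NOP uniformly on $[0,T]$.
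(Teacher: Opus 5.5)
Your proposal is correct and matches the paper's route. The paper compresses the argument into ``combining Corollary \ref{theo030401} with Propositions \ref{theo040101} and \ref{theo040102}'': the forward process $\tilde{\boldsymbol{x}}^{\varepsilon,\text{NPPD}}$ controls $[0,T^*]$, the reversed process $\bar{\boldsymbol{x}}^{\varepsilon,\text{NPPD}}$ controls $[T-T^*,T]$, and choosing $T^*>T/2$ makes the two windows cover $[0,T]$; the index-convention check you flag comes out clean, since (\ref{eq030203}) and the left-continuous definition (\ref{eq030406}) agree exactly, so the law of $\bar{\boldsymbol{x}}^{\varepsilon,\text{NPPD}}(T-t)$ is $q^{\varepsilon,\text{NPPD}}(\cdot,t)$ for every $t$ with no one-step shift to absorb.
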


\section{Examples}\label{sec06}
In this section, the class of stochastic processes delineated in Remark \ref{rem020102} is further considered, with multiple illustrations provided of instances where Propositions \ref{theo040101} and \ref{theo040102} are applicable. For simplicity, we assume that $d=1$ and $\sigma=1$. The present examination is initiated by considering the case where $\kappa=2$ and $b(x)$ is an affine function.
\begin{example}\label{exam050101}
	{$\kappa=2$, $b(x)=a_0$ or $b(x)=a_0+a_1x$.}
	
	Define
	\begin{equation*}
		M(t\vert x_s,s)\triangleq E^{\varepsilon}[x^{\varepsilon}(t)\vert x^{\varepsilon}(s)=x_s],
	\end{equation*}
	\begin{equation*}
		V(t\vert x_s,s)\triangleq E^{\varepsilon}[(x^{\varepsilon}(t)-E^{\varepsilon}x^{\varepsilon}(t))^2\vert x^{\varepsilon}(s)=x_s],
	\end{equation*}
	\begin{equation*}
		\bar{M}(t\vert x_T,0)\triangleq E^{\varepsilon}[\bar{x}^{\varepsilon,\text{NPPD}}(t)\vert \bar{x}^{\varepsilon,\text{NPPD}}(0)=x_T],
	\end{equation*}
	\begin{equation*}
		\bar{V}(t\vert x_T,0)\triangleq E^{\varepsilon}[(\bar{x}^{\varepsilon,\text{NPPD}}(t)-E^{\varepsilon}\bar{x}^{\varepsilon,\text{NPPD}}(t))^2\vert \bar{x}^{\varepsilon,\text{NPPD}}(0)=x_T],
	\end{equation*}
	\begin{equation*}
		\tilde{M}(t\vert x_0,0)\triangleq E^{\varepsilon}[\tilde{x}^{\varepsilon,\text{NPPD}}(t)\vert \tilde{x}^{\varepsilon,\text{NPPD}}(0)=x_0],
	\end{equation*}
	\begin{equation*}
		\tilde{V}(t\vert x_0,0)\triangleq E^{\varepsilon}[(\tilde{x}^{\varepsilon,\text{NPPD}}(t)-E^{\varepsilon}\tilde{x}^{\varepsilon,\text{NPPD}}(t))^2\vert \tilde{x}^{\varepsilon,\text{NPPD}}(0)=x_0].
	\end{equation*}
	As shown in Tables \ref{tab01} and \ref{tab02}, explicit expressions can be derived for all these quantities in both cases. The processes $x^{\varepsilon}(t)$, $\bar{x}^{\varepsilon,\text{NPPD}}(t)$ and $\tilde{x}^{\varepsilon,\text{NPPD}}(t)$ are all Gaussian. Furthermore, it can be directly verified that all conditions in Propositions \ref{theo040101} and \ref{theo040102} are satisfied. Note that $q^\varepsilon(x,t\vert x_T,T;x_0,0)$ is also Gaussian. As $\varepsilon\to{0}$, the NPPD concentrates around its mean $\tilde{M}(t\vert x_0,0)$ because the variance $\tilde{V}(t\vert x_0,0)\to {0}$ uniformly for $t\in[0,T]$. This further implies that the conclusion of Corollary \ref{theo040103} holds, since $\tilde{M}(t\vert x_0,0)\to\phi_{\text{NOP}}(t;x_T,T;x_0)$ uniformly for $t\in[0,T]$. 
	
	\begin{table}[ht]
		\centering
		\caption{A comprehensive exposition of $x^\varepsilon$, $\bar{x}^{\varepsilon,\text{NPPD}}$ and $\tilde{x}^{\varepsilon,\text{NPPD}}$ in the case that $b(x)=a_0$, $\kappa=2$.}
		\label{tab01}
		\resizebox{\linewidth}{!}{
			\begin{tabular}{m{16cm}}
				\toprule[1pt]
				\midrule
				1.\;\emph{Quantities related to} $x^{\varepsilon}$\\
				\hspace{0.5cm}(a)\;$\mu_{x}$ is a Gaussian measure with variance $\frac{\sigma^2}{2}$ and mean $b(x)=a_0$.\\
				\hspace{0.5cm}(b)\;$M(t\vert x_s,s)=x_s+\varepsilon{a_0}(\lfloor{t/\varepsilon}\rfloor-\lfloor{s/\varepsilon}\rfloor)$ for $0\leq{s}\leq{t}\leq{T}$.\\
				\hspace{0.5cm}(c)\;$V(t\vert x_s,s)=\frac{\varepsilon^2\sigma^2(\lfloor{t/\varepsilon}\rfloor-\lfloor{s/\varepsilon}\rfloor)}{2}$ for $0\leq{s}\leq{t}\leq{T}$. \\
				\hspace{0.5cm}(d)\;$p^\varepsilon(x,t\vert x_s,s)=\begin{cases}
					\delta(x-x_s) &\text{for}\; t\in[s,(\lfloor{s/\varepsilon}\rfloor+1)\varepsilon)\; \text{or}\; s\in [\lfloor{t/\varepsilon}\rfloor\varepsilon,t], \\
					\frac{1}{\sqrt{2\pi V(t\vert x_s,s)}}\exp\left(-\frac{(x-M(t\vert x_s,s))^2}{2V(t\vert x_s,s)}\right) &\text{for}\; t\in[(\lfloor{s/\varepsilon}\rfloor+1)\varepsilon,T]\; \text{or}\; s\in[0,\lfloor{t/\varepsilon}\rfloor\varepsilon).  
				\end{cases}$ \\ 
				\hspace{0.5cm}(e)\;$\phi_{\text{NOP}}(t;x_T,T;x_0)=x_0\frac{T-t}{T}+x_T\frac{t}{T}$ for $t\in[0,T]$. \\
				2.\;\emph{Quantities related to} $\bar{x}^{\varepsilon}$\\
				\hspace{0.5cm}(a)\;$\bar{\mu}^{\varepsilon,\text{NPPD}}_{x,\Delta+m\varepsilon}$ is a Gaussian measure with variance $\frac{\sigma^2}{2}\left(1-\frac{\varepsilon}{T-(\Delta+m\varepsilon)}\right)$ and mean $\bar{b}^\varepsilon(x,\Delta+m\varepsilon)\triangleq\frac{x_0-x}{T-(\Delta+m\varepsilon)}$ for $m=0,1,\cdots,\lfloor{T/\varepsilon}\rfloor-1$.\\
				\hspace{0.5cm}(b)\;$\bar{M}(t\vert x_T,0)=x_T\frac{T-\left(\Delta+m\varepsilon\right)}{T-\Delta}+x_0\frac{m\varepsilon}{T-\Delta}$ for $m=\lceil{(t-\Delta)/\varepsilon}\rceil$ and $t\in [0,T]$. \\
				\hspace{0.5cm}(c)\;$\bar{V}(t\vert x_T,0)=\frac{\varepsilon\sigma^2\left(T-\left(\Delta+m\varepsilon\right)\right)\left(m\varepsilon\right)}{2(T-\Delta)}$ for $m=\lceil{(t-\Delta)/\varepsilon}\rceil$ and $t\in [0,T]$. \\
				\hspace{0.5cm}(d)\;$\bar{q}^\varepsilon(x,t\vert x_T,T;x_0,0)=\begin{cases}
					\delta(x-x_T) &\text{for}\; t\in[0,\Delta],\\
					\frac{1}{\sqrt{2\pi\bar{V}(t\vert x_T,0)}}\exp\left(-\frac{(x-\bar{M}(t\vert x_T,0))^2}{2\bar{V}(t\vert x_T,0)}\right) &\text{for}\; t\in(\Delta,\Delta+(\lfloor{T/\varepsilon}\rfloor-1)\varepsilon],\\
					\delta(x-x_0) &\text{for}\; t\in(\Delta+(\lfloor{T/\varepsilon}\rfloor-1)\varepsilon,T].
				\end{cases}$ \\
				3.\;\emph{Quantities related to} $\tilde{x}^{\varepsilon}$\\
				\hspace{0.5cm}(a)\;$\tilde{\mu}^{\varepsilon,\text{NPPD}}_{x,n\varepsilon}$ is a Gaussian measure with variance $\frac{\sigma^2}{2}\left(1-\frac{\varepsilon}{(T-\Delta)-n\varepsilon}\right)$ and mean $\tilde{b}^\varepsilon(x,n\varepsilon)\triangleq\frac{x_T-x}{(T-\Delta)-n\varepsilon}$ for $n=0,1,\cdots,\lfloor{T/\varepsilon}\rfloor-1$.\\
				\hspace{0.5cm}(b)\;$\tilde{M}(t\vert x_0,0)=x_0\frac{(T-\Delta)-n\varepsilon}{T-\Delta}+x_T\frac{n\varepsilon}{T-\Delta}$ for $n=\lfloor{t/\varepsilon}\rfloor$ and $t\in [0,T]$.\\
				\hspace{0.5cm}(c)\;$\tilde{V}(t\vert x_0,0)=\frac{\varepsilon\sigma^2\left((T-\Delta)-n\varepsilon\right)\left(n\varepsilon\right)}{2(T-\Delta)}$ for $n=\lfloor{t/\varepsilon}\rfloor$ and $t\in [0,T]$.\\
				\hspace{0.5cm}(d)\;$q^\varepsilon(x,t\vert x_T,T;x_0,0)=\begin{cases}
					\delta(x-x_0) &\text{for}\; t\in[0,\varepsilon),\\
					\frac{1}{\sqrt{2\pi\tilde{V}(t\vert x_0,0)}}\exp\left(-\frac{(x-\tilde{M}(t\vert x_0,0))^2}{2\tilde{V}(t\vert x_0,0)}\right) &\text{for}\; t\in[\varepsilon,\lfloor{T/\varepsilon}\rfloor\varepsilon),\\
					\delta(x-x_T) &\text{for}\; t\in[\lfloor{T/\varepsilon}\rfloor\varepsilon,T].
				\end{cases}$\\
				\midrule
				\bottomrule[1pt]
		\end{tabular}}
	\end{table}
	\begin{table}[ht]
		\centering
		\caption{A comprehensive exposition of $x^\varepsilon$, $\bar{x}^{\varepsilon,\text{NPPD}}$ and $\tilde{x}^{\varepsilon,\text{NPPD}}$ in the case that $b(x)=a_0+a_1 x$, $\kappa=2$.}
		\label{tab02}
		\resizebox{\linewidth}{!}{
			\begin{tabular}{m{16cm}}
				\toprule[1pt]
				\midrule
				1.\;\emph{Quantities related to} $x^{\varepsilon}$\\
				\hspace{0.5cm}(a)\;$\mu_{x}$ is a Gaussian measure with variance $\frac{\sigma^2}{2}$ and mean $b(x)=a_0+a_1x$.\\
				\hspace{0.5cm}(b)\;$M(t\vert x_s,s)=x_s(1+a_1\varepsilon)^{\lfloor{t/\varepsilon}\rfloor-\lfloor{s/\varepsilon}\rfloor}+\frac{a_0}{a_1}\left((1+a_1\varepsilon)^{\lfloor{t/\varepsilon}\rfloor-\lfloor{s/\varepsilon}\rfloor}-1\right)$ for $0\leq{s}\leq{t}\leq{T}$. \\
				\hspace{0.5cm}(c)\;$V(t\vert x_s,s)=\frac{\varepsilon\sigma^2}{2a_1(2+a_1\varepsilon)}\left((1+a_1\varepsilon)^{2(\lfloor{t/\varepsilon}\rfloor-\lfloor{s/\varepsilon}\rfloor)}-1\right)$ for $0\leq{s}\leq{t}\leq{T}$. \\
				\hspace{0.5cm}(d)\;$p^\varepsilon(x,t\vert x_s,s)=\begin{cases}
					\delta(x-x_s) &\text{for}\; t\in[s,(\lfloor{s/\varepsilon}\rfloor+1)\varepsilon)\; \text{or}\; s\in [\lfloor{t/\varepsilon}\rfloor\varepsilon,t], \\
					\frac{1}{\sqrt{2\pi V(t\vert x_s,s)}}\exp\left(-\frac{(x-M(t\vert x_s,s))^2}{2V(t\vert x_s,s)}\right) &\text{for}\; t\in[(\lfloor{s/\varepsilon}\rfloor+1)\varepsilon,T]\; \text{or}\; s\in[0,\lfloor{t/\varepsilon}\rfloor\varepsilon).  
				\end{cases}$ \\ 
				\hspace{0.5cm}(e)\;$\phi_{\text{NOP}}(t;x_T,T;x_0)=x_0\frac{\sinh(a_1(T-t))}{\sinh(a_1T)}+x_T\frac{\sinh(a_1t)}{\sinh(a_1T)}-\frac{a_0}{a_1}\left(1-\frac{\sinh(a_1(T-t))}{\sinh(a_1T)}-\frac{\sinh(a_1t)}{\sinh(a_1T)}\right)$ for $t\in[0,T]$. \\
				2.\;\emph{Quantities related to} $\bar{x}^{\varepsilon}$\\
				\hspace{0.5cm}(a)\;$\bar{\mu}^{\varepsilon,\text{NPPD}}_{x,\Delta+m\varepsilon}$ is a Gaussian measure with variance $\frac{\sigma^2}{2}\frac{(1+a_1\varepsilon)^{2(\lfloor{T/\varepsilon}\rfloor-(m+1))}-1}{(1+a_1\varepsilon)^{2(\lfloor{T/\varepsilon}\rfloor-m)}-1}$ and mean $\bar{b}^\varepsilon(x,\Delta+m\varepsilon)\triangleq
				\frac{(2+a_1\varepsilon)(1+a_1\varepsilon)^{\lfloor{T/\varepsilon}\rfloor-m-1}}{(1+a_1\varepsilon)^{2(\lfloor{T/\varepsilon}\rfloor-m)}-1}a_1x_0
				-\frac{(1+a_1\varepsilon)^{2(\lfloor{T/\varepsilon}\rfloor-m)-1}+1}{(1+a_1\varepsilon)^{2(\lfloor{T/\varepsilon}\rfloor-m)}-1}a_1x-\frac{(1+a_1\varepsilon)^{\lfloor{T/\varepsilon}\rfloor-m-1}-1}{(1+a_1\varepsilon)^{\lfloor{T/\varepsilon}\rfloor-m}+1}a_0$ for $m=0,1,\cdots,\lfloor{T/\varepsilon}\rfloor-1$.\\
				\hspace{0.5cm}(b)\;$\bar{M}(t\vert x_T,0)=x_T\frac{(1+a_1\varepsilon)^{m}\left((1+a_1\varepsilon)^{2(\lfloor{T/\varepsilon}\rfloor-m)}-1\right)}{(1+a_1\varepsilon)^{2\lfloor{T/\varepsilon}\rfloor}-1}+x_0\frac{(1+a_1\varepsilon)^{\lfloor{T/\varepsilon}\rfloor-m}\left((1+a_1\varepsilon)^{2m}-1\right)}{(1+a_1\varepsilon)^{2\lfloor{T/\varepsilon}\rfloor}-1}-\frac{a_0}{a_1}\bigg(1-\frac{(1+a_1\varepsilon)^{m}\left((1+a_1\varepsilon)^{2(\lfloor{T/\varepsilon}\rfloor-m)}-1\right)}{(1+a_1\varepsilon)^{2\lfloor{T/\varepsilon}\rfloor}-1}-\frac{(1+a_1\varepsilon)^{\lfloor{T/\varepsilon}\rfloor-m}\left((1+a_1\varepsilon)^{2m}-1\right)}{(1+a_1\varepsilon)^{2\lfloor{T/\varepsilon}\rfloor}-1}\bigg)$ for $m=\lceil{(t-\Delta)/\varepsilon}\rceil$ and $t\in [0,T]$.\\
				\hspace{0.5cm}(c)\;$\bar{V}(t\vert x_T,0)=\frac{\varepsilon\sigma^2}{2a_1(2+a_1\varepsilon)}\frac{\left((1+a_1\varepsilon)^{2(\lfloor{T/\varepsilon}\rfloor-m)}-1\right)\left((1+a_1\varepsilon)^{2m}-1\right)}{(1+a_1\varepsilon)^{2\lfloor{T/\varepsilon}\rfloor}-1}$ for $m=\lceil{(t-\Delta)/\varepsilon}\rceil$ and $t\in [0,T]$. \\
				\hspace{0.5cm}(d)\;$\bar{q}^\varepsilon(x,t\vert x_T,T;x_0,0)=\begin{cases}
					\delta(x-x_T) &\text{for}\; t\in[0,\Delta],\\
					\frac{1}{\sqrt{2\pi\bar{V}(t\vert x_T,0)}}\exp\left(-\frac{(x-\bar{M}(t\vert x_T,0))^2}{2\bar{V}(t\vert x_T,0)}\right) &\text{for}\; t\in(\Delta,\Delta+(\lfloor{T/\varepsilon}\rfloor-1)\varepsilon],\\
					\delta(x-x_0) &\text{for}\; t\in(\Delta+(\lfloor{T/\varepsilon}\rfloor-1)\varepsilon,T].
				\end{cases}$ \\
				3.\;\emph{Quantities related to} $\tilde{x}^{\varepsilon}$\\
				\hspace{0.5cm}(a)\;$\tilde{\mu}^{\varepsilon,\text{NPPD}}_{x,n\varepsilon}$ is a Gaussian measure with variance $\frac{\sigma^2}{2}\frac{(1+a_1\varepsilon)^{2(\lfloor{T/\varepsilon}\rfloor-(n+1))}-1}{(1+a_1\varepsilon)^{2(\lfloor{T/\varepsilon}\rfloor-n)}-1}$ and mean $\tilde{b}^\varepsilon(x,n\varepsilon)\triangleq\frac{(2+a_1\varepsilon)(1+a_1\varepsilon)^{\lfloor{T/\varepsilon}\rfloor-n-1}}{(1+a_1\varepsilon)^{2(\lfloor{T/\varepsilon}\rfloor-n)}-1}a_1x_T
				-\frac{(1+a_1\varepsilon)^{2(\lfloor{T/\varepsilon}\rfloor-n)-1}+1}{(1+a_1\varepsilon)^{2(\lfloor{T/\varepsilon}\rfloor-n)}-1}a_1x-\frac{(1+a_1\varepsilon)^{\lfloor{T/\varepsilon}\rfloor-n-1}-1}{(1+a_1\varepsilon)^{\lfloor{T/\varepsilon}\rfloor-n}+1}a_0$ for $n=0,1,\cdots,\lfloor{T/\varepsilon}\rfloor-1$.\\
				\hspace{0.5cm}(b)\;$\tilde{M}(t\vert x_0,0)=x_0\frac{(1+a_1\varepsilon)^{n}\left((1+a_1\varepsilon)^{2(\lfloor{T/\varepsilon}\rfloor-n)}-1\right)}{(1+a_1\varepsilon)^{2\lfloor{T/\varepsilon}\rfloor}-1}+x_T\frac{(1+a_1\varepsilon)^{\lfloor{T/\varepsilon}\rfloor-n}\left((1+a_1\varepsilon)^{2n}-1\right)}{(1+a_1\varepsilon)^{2\lfloor{T/\varepsilon}\rfloor}-1}-\frac{a_0}{a_1}\bigg(1-\frac{(1+a_1\varepsilon)^{n}\left((1+a_1\varepsilon)^{2(\lfloor{T/\varepsilon}\rfloor-n)}-1\right)}{(1+a_1\varepsilon)^{2\lfloor{T/\varepsilon}\rfloor}-1}-\frac{(1+a_1\varepsilon)^{\lfloor{T/\varepsilon}\rfloor-n}\left((1+a_1\varepsilon)^{2n}-1\right)}{(1+a_1\varepsilon)^{2\lfloor{T/\varepsilon}\rfloor}-1}\bigg)$ for $n=\lfloor{t/\varepsilon}\rfloor$ and $t\in [0,T]$.\\
				\hspace{0.5cm}(c)\;$\tilde{V}(t\vert x_0,0)=\frac{\varepsilon\sigma^2}{2a_1(2+a_1\varepsilon)}\frac{\left((1+a_1\varepsilon)^{2(\lfloor{T/\varepsilon}\rfloor-n)}-1\right)\left((1+a_1\varepsilon)^{2n}-1\right)}{(1+a_1\varepsilon)^{2\lfloor{T/\varepsilon}\rfloor}-1}$ for $n=\lfloor{t/\varepsilon}\rfloor$ and $t\in [0,T]$. \\
				\hspace{0.5cm}(d)\;$q^\varepsilon(x,t\vert x_T,T;x_0,0)=\begin{cases}
					\delta(x-x_0) &\text{for}\; t\in[0,\varepsilon),\\
					\frac{1}{\sqrt{2\pi\tilde{V}(t\vert x_0,0)}}\exp\left(-\frac{(x-\tilde{M}(t\vert x_0,0))^2}{2\tilde{V}(t\vert x_0,0)}\right) &\text{for}\; t\in[\varepsilon,\lfloor{T/\varepsilon}\rfloor\varepsilon),\\
					\delta(x-x_T) &\text{for}\; t\in[\lfloor{T/\varepsilon}\rfloor\varepsilon,T].
				\end{cases}$\\
				\midrule
				\bottomrule[1pt]
		\end{tabular}}
	\end{table}
\end{example}
\begin{remark}\label{rem050101}
	In both cases, the processes $x^\varepsilon$, $\bar{x}^{\varepsilon,\text{NPPD}}$ and $\tilde{x}^{\varepsilon,\text{NPPD}}$ possess continuous-time analogues. One can see Tables \ref{tab03} and \ref{tab04} for details.
	\begin{table}[ht]
		\centering
		\caption{Continuous-time analogues of $x^\varepsilon$, $\bar{x}^{\varepsilon,\text{NPPD}}$ and $\tilde{x}^{\varepsilon,\text{NPPD}}$ in the case that $b(x)=a_0$, $\kappa=2$.}
		\label{tab03}
		\resizebox{\linewidth}{!}{
			\begin{tabular}{m{16cm}}
				\toprule[1pt]
				\midrule
				1.\;\emph{Quantities related to the continuous-time analogue of} $x^{\varepsilon}$ \emph{(with the same notations)}\\
				\hspace{0.5cm}(a)\;In the continuous-time context, $x^{\varepsilon}$ is a Brownian motion defined by the SDE $\mathrm{d}x^{\varepsilon}(t)=a_0\mathrm{d}t+\sqrt{\varepsilon/2}\sigma\mathrm{d}w_t$ and the initial condition $x^{\varepsilon}(0)=x_0$.\\
				\hspace{0.5cm}(b)\;$M(t\vert x_s,s)=x_s+a_0(t-s)$ for $0\leq{s}\leq{t}\leq{T}$.\\
				\hspace{0.5cm}(c)\;$V(t\vert x_s,s)=\frac{\varepsilon\sigma^2(t-s)}{2}$ for $0\leq{s}\leq{t}\leq{T}$. \\
				\hspace{0.5cm}(d)\;$p^\varepsilon(x,t\vert x_s,s)=\begin{cases}
					\delta(x-x_s) &\text{for}\; t=s, \\
					\frac{1}{\sqrt{2\pi V(t\vert x_s,s)}}\exp\left(-\frac{(x-M(t\vert x_s,s))^2}{2V(t\vert x_s,s)}\right) &\text{for}\; t\in(s,T]\; \text{or}\; s\in[0,t).  
				\end{cases}$ \\ 
				\hspace{0.5cm}(e)\;$\phi_{\text{NOP}}(t;x_T,T;x_0)=x_0\frac{T-t}{T}+x_T\frac{t}{T}$ for $t\in[0,T]$. \\
				2.\;\emph{Quantities related to the continuous-time analogue of} $\bar{x}^{\varepsilon,\text{NPPD}}$ \emph{(with the same notations)}\\
				\hspace{0.5cm}(a)\;In the continuous-time context, $\bar{x}^{\varepsilon,\text{NPPD}}$ is a Brownian bridge defined by the SDE $\mathrm{d}\bar{x}^{\varepsilon,\text{NPPD}}(t)=\frac{x_0-\bar{x}^{\varepsilon,\text{NPPD}}(t)}{T-t}\mathrm{d}t+\sqrt{\varepsilon/2}\sigma\mathrm{d}w_t$ and the initial condition $\bar{x}^{\varepsilon,\text{NPPD}}(0)=x_T$.\\
				\hspace{0.5cm}(b)\;$\bar{M}(t\vert x_T,0)=x_T\frac{T-t}{T}+x_0\frac{t}{T}$ for $t\in [0,T]$. \\
				\hspace{0.5cm}(c)\;$\bar{V}(t\vert x_T,0)=\frac{\varepsilon\sigma^2\left(T-t\right)t}{2T}$ for $t\in [0,T]$. \\
				\hspace{0.5cm}(d)\;$\bar{q}^\varepsilon(x,t\vert x_T,T;x_0,0)=\begin{cases}
					\delta(x-x_T) &\text{for}\; t=0,\\
					\frac{1}{\sqrt{2\pi\bar{V}(t\vert x_T,0)}}\exp\left(-\frac{(x-\bar{M}(t\vert x_T,0))^2}{2\bar{V}(t\vert x_T,0)}\right) &\text{for}\; t\in(0,T),\\
					\delta(x-x_0) &\text{for}\; t=T.
				\end{cases}$ \\
				3.\;\emph{Quantities related to the continuous-time analogue of} $\tilde{x}^{\varepsilon,\text{NPPD}}$ \emph{(with the same notations)}\\
				\hspace{0.5cm}(a)\;In the continuous-time context, $\tilde{x}^{\varepsilon,\text{NPPD}}$ is a Brownian bridge defined by the SDE $\mathrm{d}\tilde{x}^{\varepsilon,\text{NPPD}}(t)=\frac{x_T-\tilde{x}^{\varepsilon,\text{NPPD}}(t)}{T-t}\mathrm{d}t+\sqrt{\varepsilon/2}\sigma\mathrm{d}w_t$ and the initial condition $\tilde{x}^{\varepsilon,\text{NPPD}}(0)=x_0$.\\
				\hspace{0.5cm}(b)\;$\tilde{M}(t\vert x_0,0)=x_0\frac{T-t}{T}+x_T\frac{t}{T}$ for $t\in [0,T]$.\\
				\hspace{0.5cm}(c)\;$\tilde{V}(t\vert x_0,0)=\frac{\varepsilon\sigma^2\left(T-t\right)t}{2T}$ for $t\in [0,T]$.\\
				\hspace{0.5cm}(d)\;$q^\varepsilon(x,t\vert x_T,T;x_0,0)=\begin{cases}
					\delta(x-x_0) &\text{for}\; t=0,\\
					\frac{1}{\sqrt{2\pi\tilde{V}(t\vert x_0,0)}}\exp\left(-\frac{(x-\tilde{M}(t\vert x_0,0))^2}{2\tilde{V}(t\vert x_0,0)}\right) &\text{for}\; t\in(0,T),\\
					\delta(x-x_T) &\text{for}\; t=T.
				\end{cases}$\\
				\midrule
				\bottomrule[1pt]
		\end{tabular}}
	\end{table}
	\begin{table}[ht]
		\centering
		\caption{Continuous-time analogues of $x^\varepsilon$, $\bar{x}^{\varepsilon,\text{NPPD}}$ and $\tilde{x}^{\varepsilon,\text{NPPD}}$ in the case that $b(x)=a_0+a_1 x$, $\kappa=2$.}
		\label{tab04}
		\resizebox{\linewidth}{!}{
			\begin{tabular}{m{16cm}}
				\toprule[1pt]
				\midrule
				1.\;\emph{Quantities related to the continuous-time analogue of} $x^{\varepsilon}$ \emph{(with the same notations)}\\
				\hspace{0.5cm}(a)\;In the continuous-time context, $x^{\varepsilon}$ is an Ornstein-Uhlenbeck process defined by the SDE $\mathrm{d}x^{\varepsilon}(t)=(a_0+a_1  x^{\varepsilon}(t))\mathrm{d}t+\sqrt{\varepsilon/2}\sigma\mathrm{d}w_t$ and the initial condition $x^{\varepsilon}(0)=x_0$.\\
				\hspace{0.5cm}(b)\;$M(t\vert x_s,s)=x_se^{a_1(t-s)}+\frac{a_0}{a_1}\left(e^{a_1(t-s)}-1\right)$ for $0\leq{s}\leq{t}\leq{T}$.\\
				\hspace{0.5cm}(c)\;$V(t\vert x_s,s)=\frac{\varepsilon\sigma^2}{4a_1}\left(e^{2a_1(t-s)}-1\right)$ for $0\leq{s}\leq{t}\leq{T}$. \\
				\hspace{0.5cm}(d)\;$p^\varepsilon(x,t\vert x_s,s)=\begin{cases}
					\delta(x-x_s) &\text{for}\; t=s, \\
					\frac{1}{\sqrt{2\pi V(t\vert x_s,s)}}\exp\left(-\frac{(x-M(t\vert x_s,s))^2}{2V(t\vert x_s,s)}\right) &\text{for}\; t\in(s,T]\; \text{or}\; s\in[0,t).  
				\end{cases}$ \\ 
				\hspace{0.5cm}(e)\;$\phi_{\text{NOP}}(t;x_T,T;x_0)=x_0\frac{\sinh(a_1(T-t))}{\sinh(a_1T)}+x_T\frac{\sinh(a_1t)}{\sinh(a_1T)}-\frac{a_0}{a_1}\left(1-\frac{\sinh(a_1(T-t))}{\sinh(a_1T)}-\frac{\sinh(a_1t)}{\sinh(a_1T)}\right)$ for $t\in[0,T]$. \\
				2.\;\emph{Quantities related to the continuous-time analogue of} $\bar{x}^{\varepsilon,\text{NPPD}}$ \emph{(with the same notations)}\\
				\hspace{0.5cm}(a)\;In the continuous-time context, $\bar{x}^{\varepsilon,\text{NPPD}}$ is an Ornstein-Uhlenbeck bridge defined by the SDE $\mathrm{d}\bar{x}^{\varepsilon,\text{NPPD}}(t)=
				\Big(\frac{1}{\sinh(a_1(T-t))}a_1x_0-\frac{\cosh(a_1(T-t))}{\sinh(a_1(T-t))}a_1\bar{x}^{\varepsilon,\text{NPPD}}(t)-a_0\frac{\cosh(a_1(T-t))-1}{\sinh(a_1(T-t))}\Big)\mathrm{d}t+\sqrt{\varepsilon/2}\sigma\mathrm{d}w_t$ and the initial condition $\bar{x}^{\varepsilon,\text{NPPD}}(0)=x_0$.\\
				\hspace{0.5cm}(b)\;$\bar{M}(t\vert x_T,0)=x_T\frac{\sinh(a_1(T-t))}{\sinh(a_1T)}+x_0\frac{\sinh(a_1t)}{\sinh(a_1T)}-\frac{a_0}{a_1}\left(1-\frac{\sinh(a_1(T-t))}{\sinh(a_1T)}-\frac{\sinh(a_1t)}{\sinh(a_1T)}\right)$ for $t\in [0,T]$. \\
				\hspace{0.5cm}(c)\;$\bar{V}(t\vert x_T,0)=\frac{\varepsilon\sigma^2}{2a_1}\frac{\sinh(a_1(T-t))\sinh(a_1t)}{\sinh(a_1T)}$ for $t\in [0,T]$. \\
				\hspace{0.5cm}(d)\;$\bar{q}^\varepsilon(x,t\vert x_T,T;x_0,0)=\begin{cases}
					\delta(x-x_T) &\text{for}\; t=0,\\
					\frac{1}{\sqrt{2\pi\bar{V}(t\vert x_T,0)}}\exp\left(-\frac{(x-\bar{M}(t\vert x_T,0))^2}{2\bar{V}(t\vert x_T,0)}\right) &\text{for}\; t\in(0,T),\\
					\delta(x-x_0) &\text{for}\; t=T.
				\end{cases}$ \\
				3.\;\emph{Quantities related to the continuous-time analogue of} $\tilde{x}^{\varepsilon,\text{NPPD}}$ \emph{(with the same notations)}\\
				\hspace{0.5cm}(a)\;In the continuous-time context, $\tilde{x}^{\varepsilon,\text{NPPD}}$ is an Ornstein-Uhlenbeck bridge defined by the SDE $\mathrm{d}\tilde{x}^{\varepsilon,\text{NPPD}}(t)=\Big(\frac{1}{\sinh(a_1(T-t))}a_1x_T-\frac{\cosh(a_1(T-t))}{\sinh(a_1(T-t))}a_1\tilde{x}^{\varepsilon,\text{NPPD}}(t)-a_0\frac{\cosh(a_1(T-t))-1}{\sinh(a_1(T-t))}\Big)\mathrm{d}t+\sqrt{\varepsilon/2}\sigma\mathrm{d}w_t$ and the initial condition $\tilde{x}^{\varepsilon,\text{NPPD}}(0)=x_0$.\\
				\hspace{0.5cm}(b)\;$\tilde{M}(t\vert x_0,0)=x_0\frac{\sinh(a_1(T-t))}{\sinh(a_1T)}+x_T\frac{\sinh(a_1t)}{\sinh(a_1T)}-\frac{a_0}{a_1}\left(1-\frac{\sinh(a_1(T-t))}{\sinh(a_1T)}-\frac{\sinh(a_1t)}{\sinh(a_1T)}\right)$ for $t\in [0,T]$.\\
				\hspace{0.5cm}(c)\;$\tilde{V}(t\vert x_0,0)=\frac{\varepsilon\sigma^2}{2a_1}\frac{\sinh(a_1(T-t))\sinh(a_1t)}{\sinh(a_1T)}$ for $t\in [0,T]$.\\
				\hspace{0.5cm}(d)\;$q^\varepsilon(x,t\vert x_T,T;x_0,0)=\begin{cases}
					\delta(x-x_0) &\text{for}\; t=0,\\
					\frac{1}{\sqrt{2\pi\tilde{V}(t\vert x_0,0)}}\exp\left(-\frac{(x-\tilde{M}(t\vert x_0,0))^2}{2\tilde{V}(t\vert x_0,0)}\right) &\text{for}\; t\in(0,T),\\
					\delta(x-x_T) &\text{for}\; t=T.
				\end{cases}$\\
				\midrule
				\bottomrule[1pt]
		\end{tabular}}
	\end{table}
\end{remark}

In instances where $b(x)$ is not affine or $\kappa\neq 2$, the explicit expression for $p^\varepsilon(x,t\vert x_s,s)$ is unavailable. Consequently, the necessary information regarding the measures $\bar{\mu}^{\varepsilon,\text{NPPD}}_{x,\Delta+m\varepsilon}$ and $\tilde{\mu}^{\varepsilon,\text{NPPD}}_{x,n\varepsilon}$ is lacking, making it difficult to verify whether the conditions stipulated in Propositions \ref{theo040101} and \ref{theo040102} are satisfied. In such a scenario, the emphasis shifts from theoretical proofs to numerically validating the conclusions of Proposition \ref{theo040103} by examining several examples, as presented below.
\begin{example}\label{exam050103}
	{$b(x)=\frac{x-x^3}{1+\vert x\vert^3}$, $\kappa=2$.}
	
	When $\kappa=2$, the measure $\mu_x$ is Gaussian. The Hamiltonian function can be expressed explicitly as 
	\begin{equation*}
		H(x,\alpha)=b(x)\alpha+\frac{1}{4}\sigma\alpha^2.
	\end{equation*}
	Note that $b(x)=\frac{x-x^3}{1+\vert x\vert^3}\in\mathcal{C}^2$. The deterministic system (\ref{eq010105}) has two stable equilibria $x^{sl}_{eq}=-1$ and $x^{sr}_{eq}=1$, separated by an unstable equilibrium $x^{u}_{eq}=0$. Take $x_0=-1$ and $x_T=1$. As illustrated in Fig. \ref{fig01}, the numerical results indicate that for each $T>0$, there exists a unique initial momentum $\alpha_0$, corresponding to a unique solution of the constrained Hamiltonian problem (\ref{eq020203}), and therefore corresponding to a unique NOP connecting $x_0$ with $x_T$ in the time span $T$. For $T=5$, the value of $\alpha_0$ is $0.671$. The associated solution is exhibited in phase space by the magenta solid line. Its projection is exactly the NOP and is displayed in Fig. \ref{fig02d} by the black dashed line. The NPPDs and their peak trajectories for $\varepsilon=0.2$, $0.05$ and $0.01$ are calculated by means of the algorithm provided in \ref{secA3} and are illustrated in Figs \ref{fig02a}, \ref{fig02b}, and \ref{fig02c}, respectively. Comparing these results with the NOP, one can see an apparent focusing effect of the NPPD onto the NOP as $\varepsilon\to{0}$.
	\begin{figure*}[htp]
		\centering
		\subfigure{\begin{minipage}[b]{.3\linewidth}
				
				\includegraphics[scale=0.3]{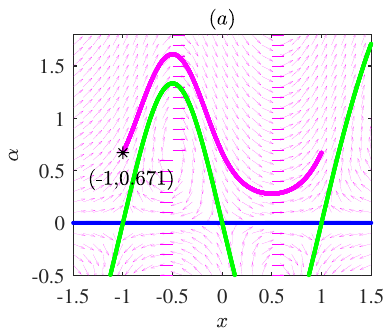}
			\end{minipage}
			\label{fig01a}
		}
		\subfigure{\begin{minipage}[b]{.3\linewidth}
				\includegraphics[scale=0.3]{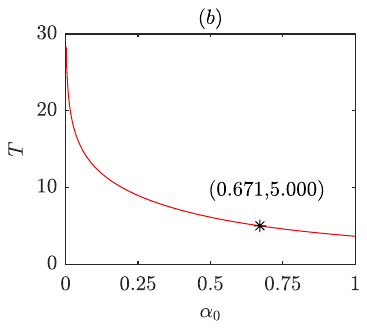}
			\end{minipage}
			\label{fig01b}
		}
		\caption{(a) The Hamiltonian vector field, together with the stable (blue) and unstable (green) manifolds of the fixed points $(-1,0)$ and $(1,0)$. The unique (magenta) solution of the constrained Hamiltonian problem for $x_0=-1$, $x_T=1$, $T=5$. (b) Plot of $T(\alpha_0)$ versus $\alpha_0$ for $x_0=-1$, $x_T=1$. The monotonicity indicates that each $T$ corresponds to a unique NOP.}
		\label{fig01}
	\end{figure*}
	\begin{figure*}[hbp]
		\centering
		\subfigure{\begin{minipage}[b]{.3\linewidth}
				\centering
				\includegraphics[scale=0.3]{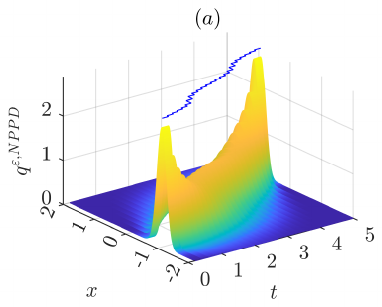}
			\end{minipage}
			\label{fig02a}
		}
		\subfigure{\begin{minipage}[b]{.3\linewidth}
				\centering
				\includegraphics[scale=0.3]{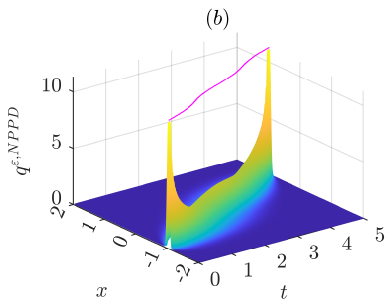}
			\end{minipage}
			\label{fig02b}
		}
		
		\subfigure{\begin{minipage}[b]{.3\linewidth}
				\centering
				\includegraphics[scale=0.3]{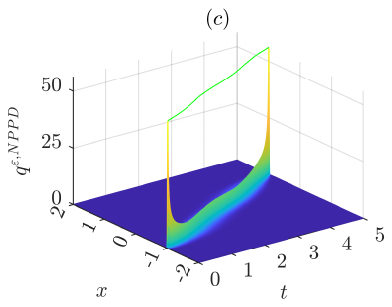}
			\end{minipage}
			\label{fig02c}
		}
		\subfigure{\begin{minipage}[b]{.3\linewidth}
				\centering
				\includegraphics[scale=0.3]{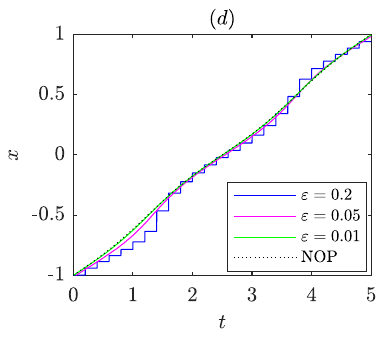}
			\end{minipage}
			\label{fig02d}
		}
		\caption{NPPDs and their peak trajectories for (a) $\varepsilon=0.2$, (b) $\varepsilon=0.05$, (c) $\varepsilon=0.01$. (d) Convergence of these peak trajectories to the NOP. Parameters: $x_0=-1$, $x_T=1$, $T=5$.}
		\label{fig02}
	\end{figure*}
\end{example}

\begin{example}\label{exam050104}
	{$b(x)=0$, $\kappa\neq2$.}
	
	When $\kappa\neq2$, in general, we do not have an explicit expression for the Hamiltonian function $H(x,\alpha)$. Fortunately, in the special case where $b(x)=0$, the NOP connecting $x_0$ with $x_T$ in the time span $T$ can be obtained explicitly. In this situation, $H(x,\alpha)$ does not depend on $x$ and can be written as $H(x,\alpha)=H^{*}(\alpha)$. From Eq. (\ref{eq020203}), we have $\dot{\alpha}(t)\equiv{0}$, so $\alpha(t)$ is constant. This implies that $\dot{\phi}_{\text{NOP}}(t)=\text{const}$. Consequently, for any given $x_0$, $x_T$ and $T$, the NOP is the line segment joining $x_0$ to $x_T$ with uniform speed, and is therefore given by $\phi_{\text{NOP}}(t)=x_0\frac{T-t}{T}+x_T\frac{t}{T}$ for $t\in[0,T]$. We set the parameters to be $x_0=0$, $x_T=1$, and $T=3$. As shown in Figs. \ref{fig03} and \ref{fig04}, an analogous focusing effect of the NPPD onto the deterministic NOP emerges.
		\begin{figure*}[htp]
		\centering
		\subfigure{\begin{minipage}[b]{.3\linewidth}
				\centering
				\includegraphics[scale=0.3]{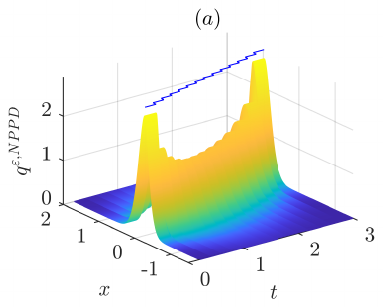}
			\end{minipage}
			\label{fig03a}
		}
		\subfigure{\begin{minipage}[b]{.3\linewidth}
				\centering
				\includegraphics[scale=0.3]{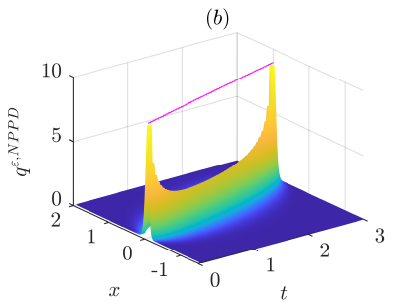}
			\end{minipage}
			\label{fig03b}
		}
		
		\subfigure{\begin{minipage}[b]{.3\linewidth}
				\centering
				\includegraphics[scale=0.3]{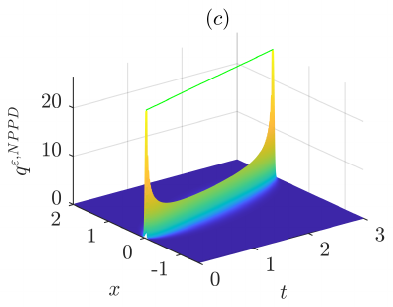}
			\end{minipage}
			\label{fig03c}
		}
		\subfigure{\begin{minipage}[b]{.3\linewidth}
				\centering
				\includegraphics[scale=0.3]{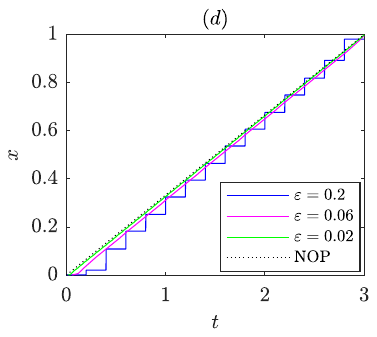}
			\end{minipage}
			\label{fig03d}
		}
		\caption{NPPDs and their peak trajectories for (a) $\varepsilon=0.2$, (b) $\varepsilon=0.06$, (c) $\varepsilon=0.02$. (d) Convergence of these peak trajectories to the NOP. Parameters: $x_0=0$, $x_T=1$, $T=3$, $\kappa=1.5$.}
		\label{fig03}
	\end{figure*}
	\begin{figure*}[hbp]
		\centering
		\subfigure{\begin{minipage}[b]{.3\linewidth}
				\centering
				\includegraphics[scale=0.3]{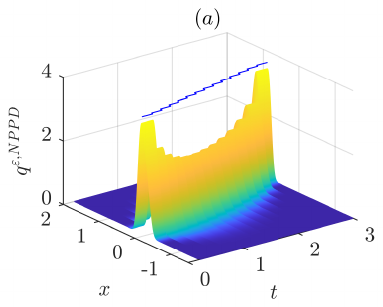}
			\end{minipage}
			\label{fig04a}
		}
		\subfigure{\begin{minipage}[b]{.3\linewidth}
				\centering
				\includegraphics[scale=0.3]{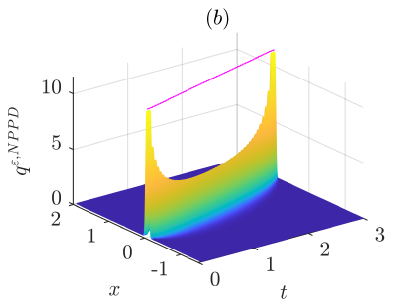}
			\end{minipage}
			\label{fig04b}
		}
		
		\subfigure{\begin{minipage}[b]{.3\linewidth}
				\centering
				\includegraphics[scale=0.3]{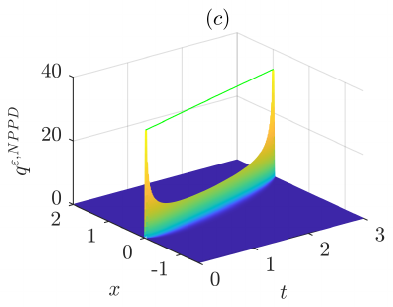}
			\end{minipage}
			\label{fig04c}
		}
		\subfigure{\begin{minipage}[b]{.3\linewidth}
				\centering
				\includegraphics[scale=0.3]{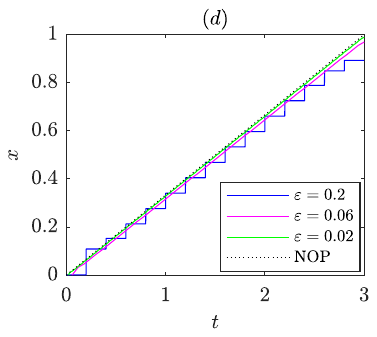}
			\end{minipage}
			\label{fig04d}
		}
		\caption{NPPDs and their peak trajectories for (a) $\varepsilon=0.2$, (b) $\varepsilon=0.06$, (c) $\varepsilon=0.02$. (d) Convergence of these peak trajectories to the NOP. Parameters: $x_0=0$, $x_T=1$, $T=3$, $\kappa=3$.}
		\label{fig04}
	\end{figure*}
\end{example}

\begin{example}\label{exam050105}
	{$b(x)=\frac{x-x^3}{1+\vert x\vert^3}$, $\kappa\neq2$.}
	
	In this particular instance, neither an explicit expression for $H(x,\alpha)$ nor an analytical form of the NOP $\{\phi_{\text{NOP}}(t;x_T,T;x_0):t\in[0,T]\}$ is available. All results must therefore be obtained through numerical approximation. 
	
	Note that the NOP $\{\phi_{\text{NOP}}(t;x_T,T;x_0):t\in[0,T]\}$ is of class $\mathcal{C}^2$. Consequently, $\alpha(t)=\nabla_xL(\phi_{\text{NOP}}(t),\dot{\phi}_{\text{NOP}}(t))$ is bounded. Together with the fact that $H(x,\alpha)$ is analytic in $\alpha$, we know that the NOP $\{\phi_{\text{NOP}}(t;x_T,T;x_0):t\in[0,T]\}$ can be approximated by the solution of Hamilton’s system (\ref{eq020203}), with the Hamiltonian $H(x,\alpha)$ replaced by its $2k$-th order approximation $H^{(2k)}(x,\alpha)$, which is given by
	\begin{equation*}
		H^{(2k)}(x,\alpha)\simeq b(x)\alpha+c_2\alpha^2+c_4\alpha^4\cdots+c_{2k}\alpha^{2k},
	\end{equation*}
	with
	\begin{equation*}
		c_2=\frac{\sigma^{2} \Gamma \left({3}/{\beta}\right)}{2 \Gamma \left({1}/{\beta}\right)},\;\;
		c_4=\frac{1}{24} \frac{\sigma^{4} \left(\Gamma \left({5}/{\beta}\right) \Gamma  \left({1}/{\beta}\right)-3 \Gamma^2  \left({3}/{\beta}\right)\right)}{\Gamma^2 \left({1}/{\beta}\right)},
	\end{equation*}
	\begin{equation*}
		c_6=\frac{1}{720} \frac{\sigma^{6} \left(\Gamma \left({7}/{\beta}\right) \Gamma^{2} \left({1}/{\beta}\right)-15 \Gamma \left({3}/{\beta}\right) \Gamma \left({5}/{\beta}\right) \Gamma \left({1}/{\beta}\right)+30 \Gamma^{3} \left({3}/{\beta}\right)\right)}{\Gamma^{3} \left({1}/{\beta}\right)},\cdots.
	\end{equation*}
	Take $x_0=-1$, $x_T=1$, and set $k=4$. Figs. \ref{fig05} and \ref{fig07} show that for each $T>0$, the approximated NOP connecting $x_0$ with $x_T$ in the time span $T$ is unique. Choosing $T=5$, the NPPDs and their peak positions for $\varepsilon=0.2$, $0.05$ and $0.01$ are plotted in Figs. \ref{fig06} and \ref{fig08}. The results clearly indicate a focusing effect of the NPPD onto the approximated NOP, and thus also onto the true NOP.
	\begin{figure*}[htp]
				\centering
				\subfigure{\begin{minipage}[b]{.3\linewidth}
								
								\includegraphics[scale=0.3]{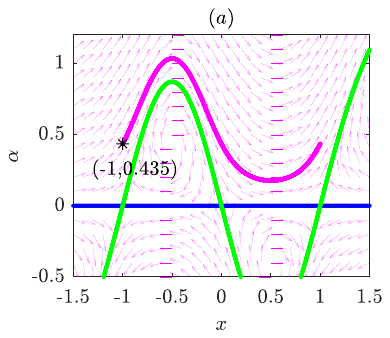}
							\end{minipage}
						\label{fig05a}
					}
				\subfigure{\begin{minipage}[b]{.3\linewidth}
								\includegraphics[scale=0.3]{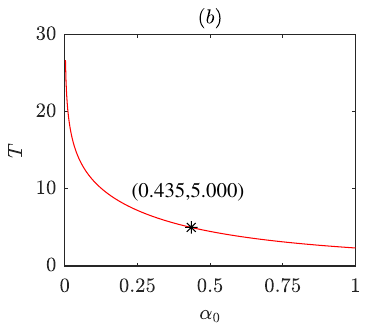}
							\end{minipage}
						\label{fig05b}
					}
				\caption{(a) The approximated Hamiltonian vector field, together with the stable (blue) and unstable (green) manifolds of the fixed points $(-1,0)$ and $(1,0)$. The unique (magenta) solution of the constrained Hamiltonian problem for $x_0=-1$, $x_T=1$, $T=5$, $\kappa=1.5$, $k=10$. (b) Plot of $T(\alpha_0)$ versus $\alpha_0$ for $x_0=-1$, $x_T=1$. The monotonicity indicates that each $T$ corresponds to a unique NOP.}
				\label{fig05}
	\end{figure*}
	\begin{figure*}[hbp]
		\centering
		\subfigure{\begin{minipage}[b]{.3\linewidth}
				\centering
				\includegraphics[scale=0.3]{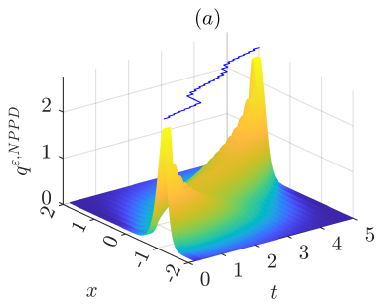}
			\end{minipage}
			\label{fig06a}
		}
		\subfigure{\begin{minipage}[b]{.3\linewidth}
				\centering
				\includegraphics[scale=0.3]{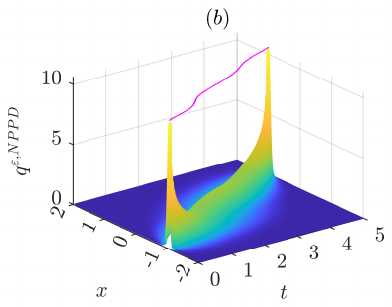}
			\end{minipage}
			\label{fig06b}
		}
		
		\subfigure{\begin{minipage}[b]{.3\linewidth}
				\centering
				\includegraphics[scale=0.3]{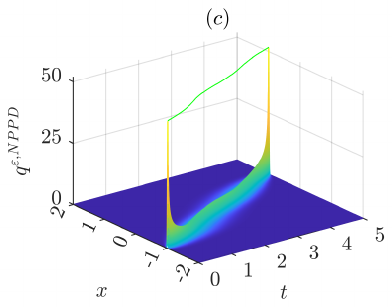}
			\end{minipage}
			\label{fig06c}
		}
		\subfigure{\begin{minipage}[b]{.3\linewidth}
				\centering
				\includegraphics[scale=0.3]{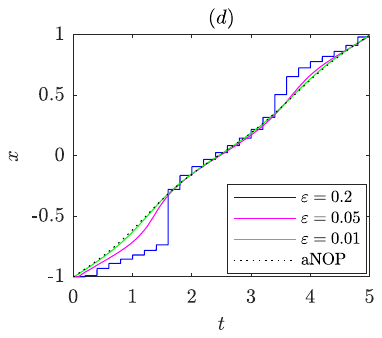}
			\end{minipage}
			\label{fig06d}
		}
		\caption{NPPDs and their peak trajectories for (a) $\varepsilon=0.2$, (b) $\varepsilon=0.05$, (c) $\varepsilon=0.01$. (d) Convergence of these peak trajectories to the approximated NOP (aNOP). Parameters: $x_0=-1$, $x_T=1$, $T=5$, $\kappa=1.5$.}
		\label{fig06}
	\end{figure*}
		\begin{figure*}[htp]
				\centering
				\subfigure{\begin{minipage}[b]{.3\linewidth}
								
								\includegraphics[scale=0.3]{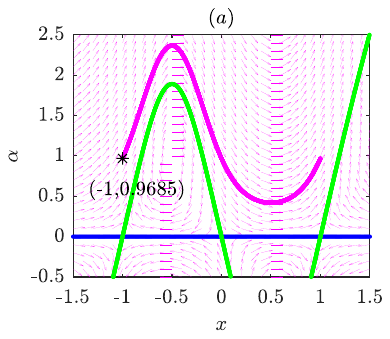}
							\end{minipage}
						\label{fig07a}
					}
				\subfigure{\begin{minipage}[b]{.3\linewidth}
								\includegraphics[scale=0.3]{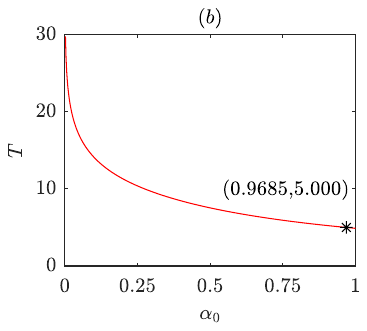}
							\end{minipage}
						\label{fig07b}
					}
				\caption{(a) The approximated Hamiltonian vector field, together with the stable (blue) and unstable (green) manifolds of the fixed points $(-1,0)$ and $(1,0)$. The unique (magenta) solution of the constrained Hamiltonian problem for $x_0=-1$, $x_T=1$, $T=5$, $\kappa=3$, $k=10$. (b) Plot of $T(\alpha_0)$ versus $\alpha_0$ for $x_0=-1$, $x_T=1$. The monotonicity indicates that each $T$ corresponds to a unique NOP.}
				\label{fig07}
			\end{figure*}
	\begin{figure*}[hbp]
		\centering
		\subfigure{\begin{minipage}[b]{.3\linewidth}
				\centering
				\includegraphics[scale=0.3]{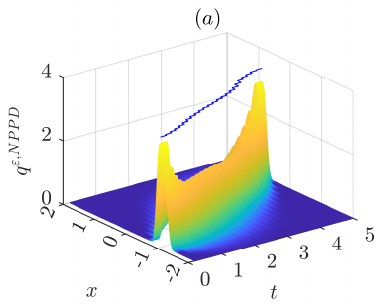}
			\end{minipage}
			\label{fig08a}
		}
		\subfigure{\begin{minipage}[b]{.3\linewidth}
				\centering
				\includegraphics[scale=0.3]{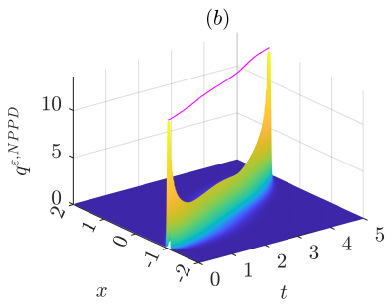}
			\end{minipage}
			\label{fig08b}
		}
		
		\subfigure{\begin{minipage}[b]{.3\linewidth}
				\centering
				\includegraphics[scale=0.3]{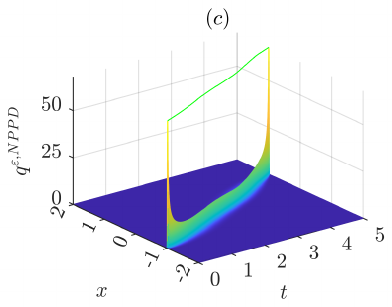}
			\end{minipage}
			\label{fig08c}
		}
		\subfigure{\begin{minipage}[b]{.3\linewidth}
				\centering
				\includegraphics[scale=0.3]{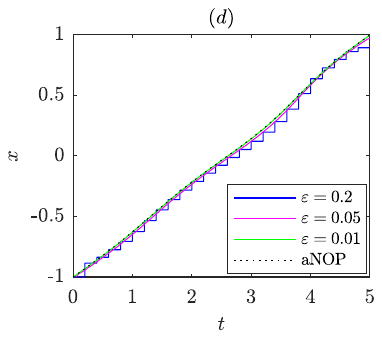}
			\end{minipage}
			\label{fig08d}
		}
		\caption{NPPDs and their peak trajectories for (a) $\varepsilon=0.2$, (b) $\varepsilon=0.05$, (c) $\varepsilon=0.01$. (d) Convergence of these peak trajectories to the approximated NOP (aNOP). Parameters: $x_0=-1$, $x_T=1$, $T=5$, $\kappa=3$.}
		\label{fig08}
	\end{figure*}
\end{example}
\begin{remark}
	Although all theoretical results in this paper require the uniqueness of the NOP, the focusing effect may still be observed when multiple NOPs coexist. To see this, take $x_0=-0.8$, $x_T=-0.2$ and $\kappa=2$. Fig. \ref{fig09b} shows that there are five primary cases for a Hamiltonian trajectory starting from $(x_0,\alpha_0)$ to reach the line $x=x_T$. For each case, the time required for the Hamiltonian trajectory to reach the target position is illustrated in Fig. \ref{fig09a}. As indicated in these figures, when $T$ is chosen sufficiently large, only Cases II and III are candidates for the possible NOP connecting $x_0$ with $x_T$ in the time span $T$. Denote by $(x^{\text{II}},\alpha^{\text{II}})$ and $(x^{\text{III}},\alpha^{\text{III}})$ the associated Hamiltonian trajectories in Cases II and III, respectively. Note that
	\begin{equation*}
		b(x)=b(-1-x),\quad\forall x\in(-1,1).
	\end{equation*}
	Then we obtain
	\begin{equation*}
		x^{\text{II}}(t)+x^{\text{III}}(T-t)=-1\quad\text{and}\quad\alpha^{\text{II}}(t)=\alpha^{\text{III}}(T-t),\quad \forall t\in[0,T].
	\end{equation*}
	Substituting these relations into (\ref{eq020204}) (or (\ref{eq020205})) gives
	\begin{equation*}
		S(x_T,T\vert x_0)=I_{[0,T]}(\{x^{\text{II}}(t):t\in[0,T]\})=I_{[0,T]}(\{x^{\text{III}}(t):t\in[0,T]\}).
	\end{equation*}
	Consequently, there exist two coexisting NOPs connecting $x_0$ with $x_T$ in the time span $T$. Set $T=10$. The two coexisting NOPs are plotted in Fig. \ref{fig10d} as black dashed lines. The NPPDs and their peak trajectories for $x_T=-0.17,\;-0.18,\;-0.19$ are illustrated in Figs. \ref{fig10a}, \ref{fig10b}, and \ref{fig10c}, respectively, revealing a clear focusing effect of the NPPD onto the coexisting NOPs. In addition, as can be seen from Figs. \ref{fig11} and \ref{fig12}, changing the parameter $\kappa$ to $1.5$ and $3$ yields the same focusing effect.
	
	This phenomenon can be explained theoretically. For given $\boldsymbol{x}_0$, $\boldsymbol{x}_T$, $T$, suppose the NOP connecting $\boldsymbol{x}_0$ with $\boldsymbol{x}_T$ in the time span $T$ is not unique, and let $\boldsymbol{\phi}^{(1)}$ and $\boldsymbol{\phi}^{(2)}$ denote two coexisting NOPs. Analogous to Proposition \ref{theo020202}, it can be shown that the following properties hold:
	\begin{itemize}
		\item $S(\boldsymbol{y},v\vert\boldsymbol{x}_0)$ and $S(\boldsymbol{x}_T,T-v\vert\boldsymbol{y})$ are smooth in $(\boldsymbol{y},v)$ at the points $(\boldsymbol{\phi}^{(1)}(t),t)$ and $(\boldsymbol{\phi}^{(2)}(t),t)$ for $t\in(0,T)$;
		\item Let $\boldsymbol{\alpha}^{(1)}$ and $\boldsymbol{\alpha}^{(2)}$ be the associated momentum functions defined in Proposition \ref{theo020202}(d). Then
		\begin{equation*}
			\begin{aligned}
				\lim_{t\to T}\nabla_{\boldsymbol{y}}S(\boldsymbol{y},v\vert\boldsymbol{x}_0)\vert_{(\boldsymbol{y},v)=(\boldsymbol{\phi}^{(1)}(t),t)}=&\boldsymbol{\alpha}^{(1)}(T),\\
				\lim_{t\to T}\nabla_{\boldsymbol{y}}S(\boldsymbol{y},v\vert\boldsymbol{x}_0)\vert_{(\boldsymbol{y},v)=(\boldsymbol{\phi}^{(2)}(t),t)}=&\boldsymbol{\alpha}^{(2)}(T),\\
			\end{aligned}
		\end{equation*}
		\begin{equation*}
			\begin{aligned}
				\lim_{t\to 0}\nabla_{\boldsymbol{y}}S(\boldsymbol{x}_T,T-v\vert\boldsymbol{y})\vert_{(\boldsymbol{y},v)=(\boldsymbol{\phi}^{(1)}(t),t)}=&-\boldsymbol{\alpha}^{(1)}(0),\\
				\lim_{t\to 0}\nabla_{\boldsymbol{y}}S(\boldsymbol{x}_T,T-v\vert\boldsymbol{y})\vert_{(\boldsymbol{y},v)=(\boldsymbol{\phi}^{(2)}(t),t)}=&-\boldsymbol{\alpha}^{(2)}(0),\\
			\end{aligned}
		\end{equation*}
		but
		\begin{equation*}
			\begin{aligned}
				\boldsymbol{\alpha}^{(1)}(T)\neq\boldsymbol{\alpha}^{(2)}(T)\quad\text{and}\quad \boldsymbol{\alpha}^{(1)}(0)\neq\boldsymbol{\alpha}^{(2)}(0).
			\end{aligned}
		\end{equation*}
	\end{itemize}
	This implies:
	\begin{itemize}
		\item The vector field in (\ref{eq020208}) is smooth at points $(\boldsymbol{\phi}^{(1)}(t),t)$ and $(\boldsymbol{\phi}^{(2)}(t),t)$ for $t\in(0,T)$, but discontinuous at the terminal point $(x_T,T)$;
		\item The vector field in (\ref{eq020209}) is smooth at points $(\boldsymbol{\phi}^{(1)}(t),t)$ and $(\boldsymbol{\phi}^{(2)}(t),t)$ for $t\in(0,T)$, but discontinuous at the initial point $(x_0,0)$.
	\end{itemize}
	Consequently, the existence and uniqueness theorem fails, and equations (\ref{eq020208}) and (\ref{eq020209}) admit multiple solutions, each corresponding to a NOP. In this situation, the focusing effect may be explained as follows:
	\begin{itemize}
		\item $\bar{\boldsymbol{x}}^{\varepsilon,\text{NPPD}}(t)$ converges in probability to a set of deterministic trajectories that are solutions of (\ref{eq040101});
		\item $\tilde{\boldsymbol{x}}^{\varepsilon,\text{NPPD}}$ converges in probability to a set of deterministic trajectories that are solutions of (\ref{eq040103}).
	\end{itemize}
	This situation is rather intricate. A rigorous proof is left for future work.
	\begin{figure*}[htp]
		\centering
		\subfigure{\begin{minipage}[b]{.3\linewidth}
				
				\includegraphics[scale=0.3]{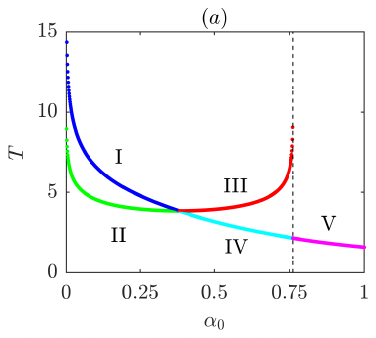}
			\end{minipage}
			\label{fig09a}
		}
		\subfigure{\begin{minipage}[b]{.3\linewidth}
				\includegraphics[scale=0.3]{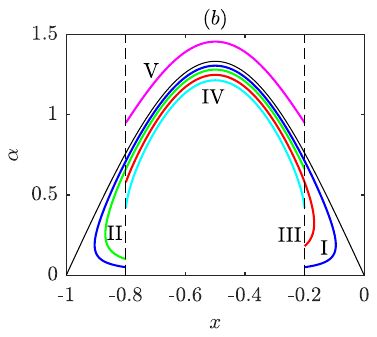}
			\end{minipage}
			\label{fig09b}
		}
		\caption{(a) Time required for a Hamiltonian trajectory starting from $(x_0,\alpha_0)$ to reach the line $x=x_T$. Five primary cases exist. Their corresponding Hamiltonian orbits are shown in (b). Parameters: $x_0=-0.8$, $x_T=-0.2$, $\kappa=2$.}
		\label{fig09}
	\end{figure*}
	\begin{figure*}[hbp]
		\centering
		\subfigure{\begin{minipage}[b]{.3\linewidth}
				\centering
				\includegraphics[scale=0.3]{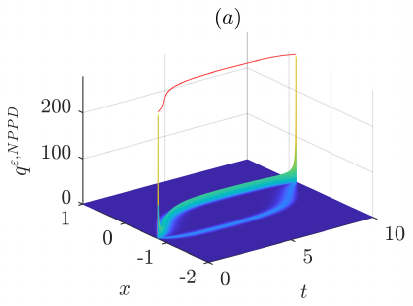}
			\end{minipage}
			\label{fig10a}
		}
		\subfigure{\begin{minipage}[b]{.3\linewidth}
				\centering
				\includegraphics[scale=0.3]{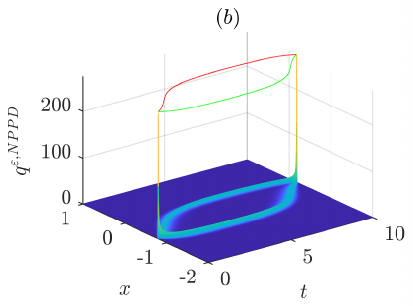}
			\end{minipage}
			\label{fig10b}
		}
		
		\subfigure{\begin{minipage}[b]{.3\linewidth}
				\centering
				\includegraphics[scale=0.3]{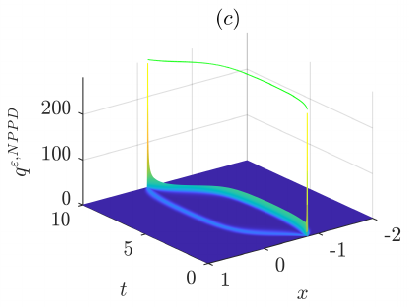}
			\end{minipage}
			\label{fig10c}
		}
		\subfigure{\begin{minipage}[b]{.3\linewidth}
				\centering
				\includegraphics[scale=0.3]{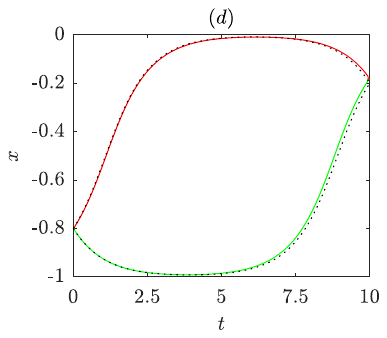}
			\end{minipage}
			\label{fig10d}
		}
		\caption{NPPDs and their peak trajectories for (a) $x_T=-0.17$, (b) $x_T=-0.18$, (c) $x_T=-0.19$, illustrating the mutation of the peak trajectory as $x_T$ varies. (d) The two peak trajectories in (b) coincide with the two coexisting NOPs from Cases II and III of Figure \ref{fig09b}. Parameters: $x_0=-0.8$, $T=10$, $\kappa=2$, $\varepsilon=0.002$.}
		\label{fig10}
	\end{figure*}
	\begin{figure*}[htp]
		\centering
		\subfigure{\begin{minipage}[b]{.3\linewidth}
				
				\includegraphics[scale=0.3]{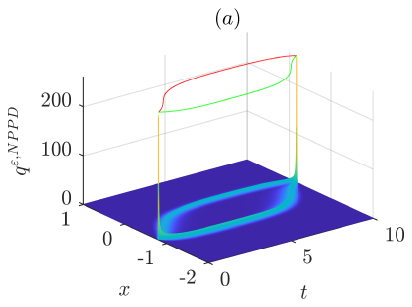}
			\end{minipage}
			\label{fig11a}
		}
		\subfigure{\begin{minipage}[b]{.3\linewidth}
				\includegraphics[scale=0.3]{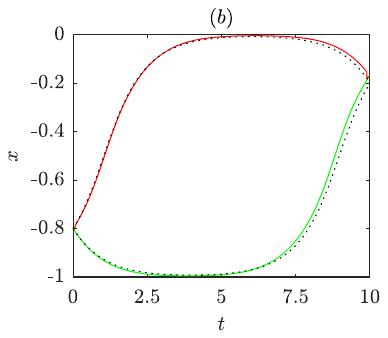}
			\end{minipage}
			\label{fig11b}
		}
		\caption{(a) NPPD and its peak trajectories for $x_T=-0.17$. (b) The two peak trajectories coincide with the two coexisting NOPs. Parameters: $x_0=-0.8$, $T=10$, $\kappa=1.5$, $\varepsilon=0.002$.}
		\label{fig11}
	\end{figure*}
	\begin{figure*}[htp]
		\centering
		\subfigure{\begin{minipage}[b]{.3\linewidth}
				
				\includegraphics[scale=0.3]{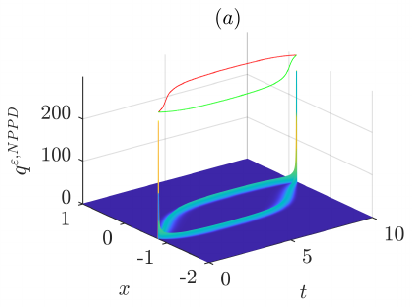}
			\end{minipage}
			\label{fig12a}
		}
		\subfigure{\begin{minipage}[b]{.3\linewidth}
				\includegraphics[scale=0.3]{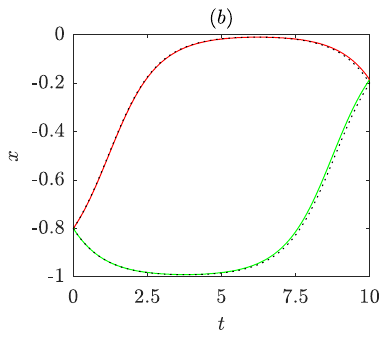}
			\end{minipage}
			\label{fig12b}
		}
		\caption{(a) NPPD and its peak trajectories for $x_T=-0.185$. (b) The two peak trajectories coincide with the two coexisting NOPs. Parameters: $x_0=-0.8$, $T=10$, $\kappa=3$, $\varepsilon=0.002$.}
		\label{fig12}
	\end{figure*}
\end{remark}

\section{Conclusions}\label{sec07}
The prehistorical description of optimal fluctuations for discrete-time Markov jump processes in non-stationary settings has been presented in this paper. Specifically, the time-reversed formula for discrete-time Markov jump processes was derived. By examining the time reversal of a specific family of probability distributions, the NPPD and its time-reversed counterpart were shown to be closely related to the time-reversed processes $\bar{\boldsymbol{x}}^{\varepsilon,\text{NPPD}}(t)$ and $\tilde{\boldsymbol{x}}^{\varepsilon,\text{NPPD}}(t)$, respectively. In the weak noise limit, both $\bar{\boldsymbol{x}}^{\varepsilon,\text{NPPD}}(t)$ and $\tilde{\boldsymbol{x}}^{\varepsilon,\text{NPPD}}(t)$ become nearly deterministic. Consequently, the focusing effect of the NPPD onto the NOP was then proved by means of the law of large numbers for the reversed processes. These results indicate that optimal fluctuations in discrete-time Markov jump processes exhibit strong qualitative similarities to those in the diffusion model (\ref{eq010103}) and the continuous-time Markov chain (\ref{eq010104}).
\section*{Declarations}
The authors declare that there is no conflict of interest related to the content of this article.

\section*{Acknowledgments}
The authors acknowledge support from the National Natural Science Foundation of China (Nos. 12172167, 12202195, 12302035, 12572038), the Jiangsu Funding Program for Excellent Postdoctoral Talent (No. 2023ZB591).

\appendix
\section{Proof of Lemma \ref{theo020203}}\label{secA1}
\begin{proof}[Proof of Lemma \ref{theo020203}]
	According to the given assumptions, for any compact set $F\subset\mathbb{R}^d$ and $\gamma>0$, there exist $\varepsilon_{1}$ and $\delta_{1}$ so that for all $\varepsilon<\varepsilon_1$ and $\delta<\delta_1$,
	\begin{equation*}
		\left\vert\frac{k^{\varepsilon,\delta}(\boldsymbol{x},t\vert\boldsymbol{x}_0,0)}{f^{\varepsilon,\delta}}-K(\boldsymbol{x},t\vert\boldsymbol{x}_0)\right\vert<\gamma,
	\end{equation*}
	\begin{equation*}
		\left\vert\frac{k^{\varepsilon,\delta}(\boldsymbol{x}+\varepsilon\boldsymbol{z},t-\varepsilon\vert\boldsymbol{x}_0,0)}{f^{\varepsilon,\delta}}-K(\boldsymbol{x}+\varepsilon\boldsymbol{z},t-\varepsilon\vert\boldsymbol{x}_0)\right\vert<\gamma,
	\end{equation*}
	and
	\begin{equation*}
		\left\vert{K}(\boldsymbol{x}+\varepsilon\boldsymbol{z},t-\varepsilon\vert\boldsymbol{x}_0)-K(\boldsymbol{x},t\vert\boldsymbol{x}_0)\right\vert<\gamma.
	\end{equation*}
	uniformly for $(\boldsymbol{x},t)\in\Sigma_{\delta_{0},[T-T^{*},T]}\times[T-T^{*},T]$ and $\boldsymbol{z}\in{F}$. Putting all these facts together, we obtain
	\begin{equation*}
		\lim_{\varepsilon\to{0},\delta\to{0}}\frac{k^{\varepsilon,\delta}(\boldsymbol{x}+\varepsilon\boldsymbol{z},t-\varepsilon\vert\boldsymbol{x}_0,0)}{k^{\varepsilon,\delta}(\boldsymbol{x},t\vert\boldsymbol{x}_0,0)}=1, 
	\end{equation*}
	uniformly for $(\boldsymbol{x},t)\in\Sigma_{\delta_{0},[T-T^{*},T]}\times[T-T^{*},T]$ and $\boldsymbol{z}\in{F}$. 
	
	Note that
	\begin{equation*}
		{P}^\varepsilon_{\boldsymbol{x}_0}\left\{\boldsymbol{x}^{\varepsilon}(t)\in{B}_{\delta}(\boldsymbol{x})\right\}=k^{\varepsilon,\delta}(\boldsymbol{x},t\vert\boldsymbol{x}_0,0)\exp\left(-\varepsilon^{-1}\inf_{\boldsymbol{y}\in{B}_{\delta}(\boldsymbol{x})}S(\boldsymbol{y},t\vert\boldsymbol{x}_0)\right),
	\end{equation*}
	\begin{equation*}
		\begin{aligned}
			{P}^\varepsilon_{\boldsymbol{x}_0}\left\{\boldsymbol{x}^{\varepsilon}(t-\varepsilon)\in{B}_{\delta}(\boldsymbol{x}+\varepsilon\boldsymbol{z})\right\}=&k^{\varepsilon,\delta}(\boldsymbol{x}+\varepsilon\boldsymbol{z},t-\varepsilon\vert\boldsymbol{x}_0,0)\\
			&\times\exp\left(-\varepsilon^{-1}\inf_{\boldsymbol{y}\in{B}_{\delta}(\boldsymbol{x}+\varepsilon\boldsymbol{z})}S(\boldsymbol{y},t-\varepsilon\vert\boldsymbol{x}_0)\right).
		\end{aligned}
	\end{equation*}
	Hence, Eq. (\ref{eq020212}) follows if we can show that
	\begin{equation}
		\begin{aligned}
			\lim_{\varepsilon\to{0}}\varepsilon^{-1}\bigg(\inf_{\boldsymbol{y}\in{B}_{\delta}(\boldsymbol{x})}S(\boldsymbol{y},t\vert\boldsymbol{x}_0)&-\inf_{\boldsymbol{y}\in{B}_{\delta}(\boldsymbol{x}+\varepsilon\boldsymbol{z})}S(\boldsymbol{y},t-\varepsilon\vert\boldsymbol{x}_0)\bigg)\\
			&=-\boldsymbol{z}\cdot\nabla_{\boldsymbol{x}}S(\boldsymbol{x},t\vert\boldsymbol{x}_0)+\frac{\partial}{\partial{t}}S(\boldsymbol{x},t\vert\boldsymbol{x}_0).
		\end{aligned}
	\end{equation}
	
	Since $S(\boldsymbol{x},t\vert\boldsymbol{x}_0)$ is at least $\mathcal{C}^{2}$ in $\Sigma_{\delta_{0},[T-T^{*},T]}\times[T-T^{*},T]$, we have
	\begin{equation*}
		\inf_{\boldsymbol{y}\in{B}_{\delta}(\boldsymbol{x})}S(\boldsymbol{y},t\vert\boldsymbol{x}_0)-S(\boldsymbol{x},t\vert\boldsymbol{x}_0)=-\left\vert\nabla_{\boldsymbol{x}}S(\boldsymbol{x},t\vert\boldsymbol{x}_0)\right\vert\delta+{O}(\delta^2)
	\end{equation*}
	\begin{equation*}
		\inf_{\boldsymbol{y}\in{B}_{\delta}(\boldsymbol{x}+\varepsilon\boldsymbol{z})}S(\boldsymbol{y},t-\varepsilon\vert\boldsymbol{x}_0)-S(\boldsymbol{x}+\varepsilon\boldsymbol{z},t-\varepsilon\vert\boldsymbol{x}_0)=-\left\vert\nabla_{\boldsymbol{x}}S(\boldsymbol{x}+\varepsilon\boldsymbol{z},t-\varepsilon\vert\boldsymbol{x}_0)\right\vert\delta+{O}(\delta^2)
	\end{equation*}
	and 
	\begin{equation*}
		\lim_{\varepsilon\to{0}}\frac{S(\boldsymbol{x},t\vert\boldsymbol{x}_0)-S(\boldsymbol{x}+\varepsilon\boldsymbol{z},t-\varepsilon\vert\boldsymbol{x}_0)}{\varepsilon}=-\boldsymbol{z}\cdot\nabla_{\boldsymbol{x}}S(\boldsymbol{x},t\vert\boldsymbol{x}_0)+\frac{\partial}{\partial{t}}S(\boldsymbol{x},t\vert\boldsymbol{x}_0).
	\end{equation*}
	Combining them with 
	\begin{equation*}
		\left\vert\nabla_{\boldsymbol{x}}S(\boldsymbol{x},t\vert\boldsymbol{x}_0)\right\vert-\left\vert\nabla_{\boldsymbol{x}}S(\boldsymbol{x}+\varepsilon\boldsymbol{z},t-\varepsilon\vert\boldsymbol{x}_0)\right\vert={O}(\varepsilon)
	\end{equation*}
	we then prove Eq. (\ref{eq020212}). The convergence is uniform for $(\boldsymbol{x},t)\in\Sigma_{\delta_{0},[T-T^{*},T]}\times[T-T^{*},T]$ and $\boldsymbol{z}\in{F}$, as the assumptions hold uniformly.
	
	Finally, since
	\begin{equation*}
		\frac{p^{\varepsilon}(\boldsymbol{x}+\varepsilon\boldsymbol{z},t-\varepsilon\vert\boldsymbol{x}_0,0)}{p^{\varepsilon}(\boldsymbol{x},t\vert\boldsymbol{x}_0,0)}=\lim_{\delta\to{0}}\frac{{P}^\varepsilon_{\boldsymbol{x}_0}\left\{\boldsymbol{x}^{\varepsilon}(t-\varepsilon)\in{B}_{{\delta}}(\boldsymbol{x}+\varepsilon\boldsymbol{z})\right\}}{{P}^\varepsilon_{\boldsymbol{x}_0}\left\{\boldsymbol{x}^{\varepsilon}(t)\in{B}_{{\delta}}(\boldsymbol{x})\right\}}.
	\end{equation*}
	Eq. (\ref{eq020213}) follows from Eq. (\ref{eq020212}). This completes the proof.
\end{proof}
\section{Proof of Proposition \ref{theo040101}}\label{secA2}
\begin{lemma}\label{theoA20101}
	Suppose that the conditions in Proposition \ref{theo040101} hold. Define
	\begin{equation*}
		\nu\triangleq\inf\left\{m\in\{0,1,\cdots,\lfloor{T/\varepsilon}\rfloor\}:(\bar{\boldsymbol{x}}^{\varepsilon,\text{NPPD}}(\Delta+m\varepsilon),\Delta+m\varepsilon)\notin\bar{\Sigma}_{\delta_{0},[0,T^{*}]}\times[0,T)\right\}.
	\end{equation*}
	Then in order to show that
	\begin{equation*}
		\lim_{\varepsilon\to{0}}P^{\varepsilon}\left\{\max_{1\leq{m}\leq\left\lceil{(T^{*}-\Delta)/{\varepsilon}}\right\rceil}\big\vert\bar{\boldsymbol{x}}^{\varepsilon,\text{NPPD}}(\Delta+(m\wedge\nu)\varepsilon)-\bar{\boldsymbol{x}}^{0}(\Delta+(m\wedge\nu)\varepsilon)\big\vert>\delta\right\}=0,
	\end{equation*}
	for every sufficiently small $\delta>0$, it suffices to show that
	\begin{equation*}
		\begin{aligned}
			\lim_{\varepsilon\to{0}}P^{\varepsilon}\bigg\{\max_{1\leq{m}\leq\left\lceil{(T^{*}-\Delta)/{\varepsilon}}\right\rceil}&\bigg\vert\bar{\boldsymbol{x}}^{\varepsilon,\text{NPPD}}(\Delta+(m\wedge\nu)\varepsilon)-\boldsymbol{x}_T\\&-\varepsilon\sum_{i=0}^{m\wedge\nu-1}\bar{\boldsymbol{b}}^{\varepsilon}(\bar{\boldsymbol{x}}^{\varepsilon,\text{NPPD}}(\Delta+i\varepsilon),\Delta+i\varepsilon)\bigg\vert>\delta\bigg\}=0
		\end{aligned}
	\end{equation*}
	for every sufficiently small $\delta>0$.
\end{lemma}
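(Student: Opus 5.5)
This reduction is a discrete Gronwall argument. Write $\boldsymbol{y}_m\triangleq\bar{\boldsymbol{x}}^{\varepsilon,\text{NPPD}}(\Delta+(m\wedge\nu)\varepsilon)$ and $\boldsymbol{y}^0_m\triangleq\bar{\boldsymbol{x}}^{0}(\Delta+(m\wedge\nu)\varepsilon)$. Define the martingale-type remainder
\begin{equation*}
\boldsymbol{R}_m\triangleq\boldsymbol{y}_m-\boldsymbol{x}_T-\varepsilon\sum_{i=0}^{m\wedge\nu-1}\bar{\boldsymbol{b}}^{\varepsilon}(\boldsymbol{y}_i,\Delta+i\varepsilon).
\end{equation*}
By hypothesis, $\max_m|\boldsymbol{R}_m|\le\delta'$ with probability tending to one, for any $\delta'>0$. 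On the deterministic side, integrate Eq. (\ref{eq040101}) from $0$ to $\Delta+(m\wedge\nu)\varepsilon$. Since $\bar{\boldsymbol{x}}^0(t)=\boldsymbol{x}_T$ up to an $O(\Delta)=O(\varepsilon)$ error on $[0,\Delta]$, this gives
\begin{equation*}
\boldsymbol{y}^0_m=\boldsymbol{x}_T+\varepsilon\sum_{i=0}^{m\wedge\nu-1}\bar{\boldsymbol{b}}(\boldsymbol{y}^0_i,\Delta+i\varepsilon)+\boldsymbol{r}^\varepsilon_m,
\end{equation*}
where $\max_m|\boldsymbol{r}^\varepsilon_m|\to0$ deterministically. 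This is a Riemann-sum error. It uses the continuity of $\bar{\boldsymbol{b}}$ in $t$ and the Lipschitz property of $\bar{\boldsymbol{x}}^0$, both of which follow from Proposition \ref{theo020202}(e), since $S(\boldsymbol{x},T-t|\boldsymbol{x}_0)$ is $\mathcal{C}^{2}$ on the relevant neighbourhood and $H$ is $\mathcal{C}^{k+1}$.

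Next, subtract the two representations. For $i<m\wedge\nu$ the point $(\boldsymbol{y}_i,\Delta+i\varepsilon)$ lies in $\bar{\Sigma}_{\delta_0,[0,T^*]}\times[0,T^*]$; this is exactly why the stopping time $\nu$ is used. Split
\begin{equation*}
\bar{\boldsymbol{b}}^\varepsilon(\boldsymbol{y}_i,\cdot)-\bar{\boldsymbol{b}}(\boldsymbol{y}^0_i,\cdot)=[\bar{\boldsymbol{b}}^\varepsilon-\bar{\boldsymbol{b}}](\boldsymbol{y}_i,\cdot)+[\bar{\boldsymbol{b}}(\boldsymbol{y}_i,\cdot)-\bar{\boldsymbol{b}}(\boldsymbol{y}^0_i,\cdot)].
\end{equation*}
By assumption (b) of Proposition \ref{theo040101}, the first bracket is at most $\eta(\varepsilon)\to0$ uniformly on this set. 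For the second bracket, $\bar{\boldsymbol{b}}(\boldsymbol{x},t)=-\nabla_{\boldsymbol{\alpha}}H(\boldsymbol{x},\nabla_{\boldsymbol{x}}S(\boldsymbol{x},T-t|\boldsymbol{x}_0))$ is $\mathcal{C}^1$ on the compact closure of $\bar{\Sigma}_{\delta_0,[0,T^*]}\times[0,T^*]$. It is therefore Lipschitz in $\boldsymbol{x}$ with some constant $\Lambda$, provided the segment between $\boldsymbol{y}_i$ and $\boldsymbol{y}_i^0$ stays in the domain. To guarantee this, shrink to a ball of radius $\delta_0$ around $\bar{\boldsymbol{x}}^0(t)$ and use convexity of balls: both points lie in $B_{\delta_0}(\bar{\boldsymbol{x}}^0(\Delta+i\varepsilon))$ up to an $O(\varepsilon)$ time mismatch, which I will absorb by slightly enlarging $\delta_0$ in the Lipschitz estimate. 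Writing $e_m\triangleq|\boldsymbol{y}_m-\boldsymbol{y}^0_m|$, this yields
\begin{equation*}
e_m\le|\boldsymbol{R}_m|+|\boldsymbol{r}^\varepsilon_m|+T\eta(\varepsilon)+\Lambda\varepsilon\sum_{i=0}^{m-1}e_i .
\end{equation*}
For $i\ge m\wedge\nu$ the increments vanish, so the sum may be extended harmlessly.

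The discrete Gronwall inequality then gives
\begin{equation*}
\max_{m\le\lceil(T^*-\Delta)/\varepsilon\rceil}e_m\le\Big(\max_m|\boldsymbol{R}_m|+\max_m|\boldsymbol{r}^\varepsilon_m|+T\eta(\varepsilon)\Big)e^{\Lambda T}.
\end{equation*}
Given $\delta$, choose $\delta'=\delta e^{-\Lambda T}/2$. For $\varepsilon$ small the deterministic terms total less than $\delta e^{-\Lambda T}/2$. Hence $\{\max_m e_m>\delta\}\subset\{\max_m|\boldsymbol{R}_m|>\delta'\}$, whose probability tends to zero by hypothesis, and this proves the lemma.

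I expect the main obstacle to be bookkeeping rather than any deep step. The points needing care are the mismatch between the grid $\Delta+i\varepsilon$ and the continuous time of the ODE, the initial segment $[0,\Delta]$, and ensuring that the Lipschitz bound for $\bar{\boldsymbol{b}}$ is applied only inside the region where $S$ is smooth. The stopping at $\nu$ handles the last point, and the first two contribute only $O(\varepsilon)$ errors.
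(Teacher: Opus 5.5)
Your proposal is correct and follows the paper's proof essentially step for step. The paper splits the error into the same pieces: the initial segment $\Theta_1$, the martingale remainder $\Theta_2$ controlled by the hypothesis, the drift-approximation error $\Theta_3$ from assumption (b), the Lipschitz term $\Theta_4$, and the Riemann-sum error $\Theta_5$. It then closes with the discrete Gronwall inequality, exactly as you do.

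One small correction concerns your Lipschitz step. The stopping time $\nu$ only places $\boldsymbol{y}_i$ somewhere in the union $\bar{\Sigma}_{\delta_0,[0,T^*]}$, not in the ball around $\bar{\boldsymbol{x}}^0(\Delta+i\varepsilon)$, so your convexity-of-balls argument does not apply as stated. The bound still holds: after slightly shrinking $\delta_0$, $\bar{\boldsymbol{b}}(\cdot,t)$ is Lipschitz on the whole nonconvex union by the standard compactness argument. Use the mean-value bound for nearby points and boundedness of $\bar{\boldsymbol{b}}$ for distant ones; this gives the constant $C_4$ that the paper simply asserts.
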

\begin{proof}
	By the definition of $\nu$, we know that $\bar{\boldsymbol{x}}^{\varepsilon,\text{NPPD}}(\Delta+\nu\varepsilon)\notin\bar{\Sigma}_{\delta_{0},[0,T^{*}]}$ if $\nu<\lfloor{T/\varepsilon}\rfloor$. Let $\hat{\boldsymbol{x}}^{\varepsilon}(\Delta+m\varepsilon)\triangleq\bar{\boldsymbol{x}}^{\varepsilon,\text{NPPD}}(\Delta+(m\wedge\nu)\varepsilon)$ for $m=0,1,\cdots,\lfloor{T/\varepsilon}\rfloor$. Then $\hat{\boldsymbol{x}}^{\varepsilon}(\Delta+m\varepsilon)$ is a Markov chain, i.e., $\hat{\boldsymbol{x}}^{\varepsilon}(\Delta)=\bar{\boldsymbol{x}}^{\varepsilon,\text{NPPD}}(\Delta)$, and for $m=1,2,\cdots,\lfloor{T/\varepsilon}\rfloor$,
	\begin{equation*}
		\begin{aligned}
			\hat{\boldsymbol{x}}^{\varepsilon}(\Delta+m\varepsilon)&=\bar{\boldsymbol{x}}^{\varepsilon,\text{NPPD}}(\Delta+(m\wedge\nu)\varepsilon)\\
			&=\bar{\boldsymbol{x}}^{\varepsilon,\text{NPPD}}(\Delta)+\varepsilon\sum_{i=0}^{m\wedge\nu-1}\bar{\boldsymbol{Z}}^{\varepsilon,\text{NPPD}}_{i}\\
			&=\bar{\boldsymbol{x}}^{\varepsilon,\text{NPPD}}(\Delta)+\varepsilon\sum_{i=0}^{m\wedge\nu-1}\bar{\boldsymbol{Z}}^{\varepsilon,\text{NPPD}}_{i\wedge\nu}\\
			&=\hat{\boldsymbol{x}}^{\varepsilon}(\Delta)+\varepsilon\sum_{i=0}^{m\wedge\nu-1}\hat{\boldsymbol{Z}}^{\varepsilon}_{i\wedge\nu}\\
			&=\hat{\boldsymbol{x}}^{\varepsilon}(\Delta)+\varepsilon\sum_{i=0}^{m-1}\hat{\boldsymbol{Z}}^{\varepsilon}_{i\wedge\nu}\\
			&=\hat{\boldsymbol{x}}^{\varepsilon}(\Delta)+\varepsilon\sum_{i=0}^{m-1}\hat{\boldsymbol{Z}}^{\varepsilon}_{i}.
		\end{aligned}
	\end{equation*}
	where $\{\hat{\boldsymbol{Z}}^{\varepsilon}_{m}\}$ is a jump process whose conditional jump probability at time $m$ is non-stationary (i.e., dependent on the variable $m$), independent of $\hat{\boldsymbol{Z}}^{\varepsilon}_{k}$ for $k<m$, dependent only on the current state of $\hat{\boldsymbol{x}}^{\varepsilon}(\Delta+m\varepsilon)$ (i.e., independent of previous states of $\hat{\boldsymbol{x}}^{\varepsilon}(\Delta+k\varepsilon)$ for $k<m$), and is given by $\hat{\mu}_{\boldsymbol{x},\Delta+m\varepsilon}^{\varepsilon}$. That is, for any Borel set $D\subset\mathbb{R}^d$ and $m=0,1,\cdots,\lfloor{T/\varepsilon}\rfloor-1$,
	\begin{equation*}
		P^{\varepsilon}\left(\hat{\boldsymbol{Z}}^{\varepsilon}_{m}\in{D}\vert\hat{\boldsymbol{x}}^{\varepsilon}(\Delta+m\varepsilon)=\boldsymbol{x},\hat{\mathcal{F}}_{<m}\right)=\hat{\mu}^{\varepsilon}_{\boldsymbol{x},\Delta+m\varepsilon}(D),
	\end{equation*}
	with
	\begin{equation*}
		\hat{\mu}_{\boldsymbol{x},\Delta+m\varepsilon}^{\varepsilon}(D)\triangleq\begin{cases}
			\bar{\mu}_{\boldsymbol{x},\Delta+m\varepsilon}^{\varepsilon,\text{NPPD}}(D), & \text{if}\;\boldsymbol{x}\in\bar{\Sigma}_{\delta_{0},[0,T^{*}]},\\
			\int_{D}\delta(\boldsymbol{z})\lambda(\mathrm{d}\boldsymbol{z}),& \text{otherwise},
		\end{cases}
	\end{equation*}
	where $\hat{\mathcal{F}}_{<m}$ denotes the history of $\hat{\boldsymbol{x}}^{\varepsilon}(\Delta+k\varepsilon)$ before $m$. (In other words, $\{\hat{\boldsymbol{Z}}^{\varepsilon}_{m}\}$ denote the increments of the stopped process $\hat{\boldsymbol{x}}^{\varepsilon}(\Delta+m\varepsilon)$. If $m<\nu$, we know that $\hat{\boldsymbol{x}}^{\varepsilon}(\Delta+m\varepsilon)\in\bar{\Sigma}_{\delta_{0},[0,T^{*}]}$, and therefore $\hat{\boldsymbol{Z}}^{\varepsilon}_{m}=\varepsilon^{-1}(\hat{\boldsymbol{x}}^{\varepsilon}(\Delta+(m+1)\varepsilon)-\hat{\boldsymbol{x}}^{\varepsilon}(\Delta+m\varepsilon))=\bar{\boldsymbol{Z}}^{\varepsilon,\text{NPPD}}_{m}$. While if $\nu\leq m<\lfloor{T/\varepsilon}\rfloor$, we have that $\hat{\boldsymbol{x}}^{\varepsilon}(\Delta+m\varepsilon)=\hat{\boldsymbol{x}}^{\varepsilon}(\Delta+\nu\varepsilon)$. Consequently, $\hat{\boldsymbol{Z}}^{\varepsilon}_{m}\equiv{0}$.)
	
	Now define
	\begin{equation*}
		\hat{\boldsymbol{b}}^{\varepsilon}(\boldsymbol{x},t)\triangleq\int_{\mathbb{R}^d}{\boldsymbol{z}\hat{\mu}^{\varepsilon}_{\boldsymbol{x},t}(\mathrm{d}\boldsymbol{z})}=\begin{cases}
			\bar{\boldsymbol{b}}^\varepsilon(\boldsymbol{x},t), &\text{if}\;\boldsymbol{x}\in\bar{\Sigma}_{\delta_{0},[0,T^{*}]},\\
			\boldsymbol{0}, & \text{otherwise}.
		\end{cases},
	\end{equation*}
	and
	\begin{equation*}
		\hat{\boldsymbol{b}}(\boldsymbol{x},t)\triangleq\begin{cases}
			\bar{\boldsymbol{b}}(\boldsymbol{x},t), &\text{if}\;\boldsymbol{x}\in\bar{\Sigma}_{\delta_{0},[0,T^{*}]},\\
			\boldsymbol{0}, & \text{otherwise}.
		\end{cases}
	\end{equation*}
	Then for $m=1,2,\cdots,\left\lceil{(T^{*}-\Delta)/{\varepsilon}}\right\rceil$,
	\begin{equation*}
		\begin{aligned}
			\hat{\boldsymbol{x}}^{\varepsilon}(\Delta+&m\varepsilon)-\bar{\boldsymbol{x}}^{0}(\Delta+(m\wedge\nu)\varepsilon)\\
			=&\bar{\boldsymbol{x}}^{\varepsilon,\text{NPPD}}(\Delta+(m\wedge\nu)\varepsilon)-\bar{\boldsymbol{x}}^{0}(\Delta+(m\wedge\nu)\varepsilon)\\
			=&\bar{\boldsymbol{x}}^{\varepsilon,\text{NPPD}}(\Delta)-\bar{\boldsymbol{x}}^{0}(\Delta)\\
			&+\varepsilon\sum_{i=0}^{m\wedge\nu-1}\bar{\boldsymbol{Z}}^{\varepsilon,\text{NPPD}}_{i}-\varepsilon\sum_{i=0}^{m\wedge\nu-1}\bar{\boldsymbol{b}}^{\varepsilon}(\bar{\boldsymbol{x}}^{\varepsilon,\text{NPPD}}(\Delta+i\varepsilon),\Delta+i\varepsilon)\\
			&+\varepsilon\sum_{i=0}^{m\wedge\nu-1}\left[\bar{\boldsymbol{b}}^{\varepsilon}(\bar{\boldsymbol{x}}^{\varepsilon,\text{NPPD}}(\Delta+i\varepsilon),\Delta+i\varepsilon)-\bar{\boldsymbol{b}}(\bar{\boldsymbol{x}}^{\varepsilon,\text{NPPD}}(\Delta+i\varepsilon),\Delta+i\varepsilon)\right]\\
			&+\varepsilon\sum_{i=0}^{m\wedge\nu-1}\left[\bar{\boldsymbol{b}}(\bar{\boldsymbol{x}}^{\varepsilon,\text{NPPD}}(\Delta+i\varepsilon),\Delta+i\varepsilon)-\bar{\boldsymbol{b}}(\bar{\boldsymbol{x}}^{0}(\Delta+i\varepsilon),\Delta+i\varepsilon)\right]\\
			&+\varepsilon\sum_{i=0}^{m\wedge\nu-1}\bar{\boldsymbol{b}}(\bar{\boldsymbol{x}}^{0}(\Delta+i\varepsilon),\Delta+i\varepsilon)-\sum_{i=0}^{m\wedge\nu-1}\int_{\Delta+i\varepsilon}^{\Delta+(i+1)\varepsilon}\bar{\boldsymbol{b}}(\bar{\boldsymbol{x}}^{0}(u),u)\mathrm{d}u\\
			=&\Theta_{1}+\Theta_{2}+\Theta_{3}+\Theta_{4}+\Theta_{5}.
		\end{aligned}
	\end{equation*}
	
	By the $\mathcal{C}^{1}$-smoothness of $\bar{\boldsymbol{b}}(\boldsymbol{x},t)$ on $\bar{\Sigma}_{\delta_{0},[0,T^{*}]}\times[0,T^{*}]$, we have
	\begin{equation*}
		\vert\Theta_{1}\vert\leq\int_{0}^{\Delta}\left\vert\hat{\boldsymbol{b}}(\bar{\boldsymbol{x}}^{0}(u),u)\right\vert\mathrm{d}u\leq{C_{1}}\Delta\leq{C_{1}}\varepsilon,
	\end{equation*}
	\begin{equation*}
		\begin{aligned}
			\vert\Theta_{4}\vert&=\left\vert\varepsilon\sum_{i=0}^{m\wedge\nu-1}\left[\hat{\boldsymbol{b}}(\hat{\boldsymbol{x}}^{\varepsilon}(\Delta+i\varepsilon),\Delta+i\varepsilon)-\hat{\boldsymbol{b}}(\bar{\boldsymbol{x}}^{0}(\Delta+(i\wedge\nu)\varepsilon),\Delta+i\varepsilon)\right]\right\vert\\
			&\leq\varepsilon\sum_{i=0}^{m\wedge\nu-1}{C_{4}}\left\vert\hat{\boldsymbol{x}}^{\varepsilon}(\Delta+i\varepsilon)-\bar{\boldsymbol{x}}^{0}(\Delta+(i\wedge\nu)\varepsilon)\right\vert\\
			&\leq\varepsilon\sum_{i=0}^{m-1}{C_{4}}\left\vert\hat{\boldsymbol{x}}^{\varepsilon}(\Delta+i\varepsilon)-\bar{\boldsymbol{x}}^{0}(\Delta+(i\wedge\nu)\varepsilon)\right\vert,
		\end{aligned}
	\end{equation*}
	\begin{equation*}
		\begin{aligned}
			\vert\Theta_{5}\vert&\leq\sum_{i=0}^{m\wedge\nu-1}\int_{\Delta+i\varepsilon}^{\Delta+(i+1)\varepsilon}\left\vert\hat{\boldsymbol{b}}(\bar{\boldsymbol{x}}^{0}(\Delta+i\varepsilon),\Delta+i\varepsilon)-\hat{\boldsymbol{b}}(\bar{\boldsymbol{x}}^{0}(u),u)\right\vert\mathrm{d}u\\
			&\leq{C_{5}}\varepsilon^2(m\wedge\nu)\leq{C_{5}}\varepsilon^2\left\lceil{(T^{*}-\Delta)/{\varepsilon}}\right\rceil\leq{C_{5}}T\varepsilon,
		\end{aligned}
	\end{equation*}
	where $C_{1},\,C_{4}$ are constants delineating the upper bound and the Lipschitz coefficient of $\bar{\boldsymbol{b}}(\boldsymbol{x},t)$ on $\bar{\Sigma}_{\delta_{0},[0,T^{*}]}\times[0,T^{*}]$ respectively, $C_{5}$ is an upper bound on the first-order derivative of $\bar{\boldsymbol{b}}(\bar{\boldsymbol{x}}^{0}(t),t)$ on the time interval $[0,T^{*}+\varepsilon]$. 
	
	By the uniform convergence of $\bar{\boldsymbol{b}}^{\varepsilon}(\boldsymbol{x},t)$ to $\bar{\boldsymbol{b}}(\boldsymbol{x},t)$ on $\bar{\Sigma}_{\delta_{0},[0,T^{*}]}\times[0,T^{*}]$, we obtain
	\begin{equation*}
		\begin{aligned}
			\vert\Theta_{3}\vert&\leq\varepsilon\sum_{i=0}^{m\wedge\nu-1}\left\vert\hat{\boldsymbol{b}}^{\varepsilon}(\hat{\boldsymbol{x}}^{\varepsilon}(\Delta+i\varepsilon),\Delta+i\varepsilon)-\hat{\boldsymbol{b}}(\hat{\boldsymbol{x}}^{\varepsilon}(\Delta+i\varepsilon),\Delta+i\varepsilon)\right\vert\\
			&\leq{T}\sup_{(\boldsymbol{x},t)\in\bar{\Sigma}_{\delta_{0},[0,T^{*}]}\times[0,T^{*}]}\left\vert\hat{\boldsymbol{b}}^{\varepsilon}(\boldsymbol{x},t)-\hat{\boldsymbol{b}}(\boldsymbol{x},t)\right\vert=C_{3}(\varepsilon)T\to{0},\;\;\text{as}\;\varepsilon\to{0}.
		\end{aligned}
	\end{equation*}
	
	In addition,
	\begin{equation*}
		\begin{aligned}
			\vert\Theta_{2}\vert&=\left\vert\varepsilon\sum_{i=0}^{m\wedge\nu-1}\hat{\boldsymbol{Z}}^{\varepsilon}_{i}-\varepsilon\sum_{i=0}^{m\wedge\nu-1}\hat{\boldsymbol{b}}^{\varepsilon}(\hat{\boldsymbol{x}}^{\varepsilon}(\Delta+i\varepsilon),\Delta+i\varepsilon)\right\vert\\
			&=\left\vert\varepsilon\sum_{i=0}^{m-1}\hat{\boldsymbol{Z}}^{\varepsilon}_{i}-\varepsilon\sum_{i=0}^{m-1}\hat{\boldsymbol{b}}^{\varepsilon}(\hat{\boldsymbol{x}}^{\varepsilon}(\Delta+i\varepsilon),\Delta+i\varepsilon)\right\vert\\
			&\leq\max_{1\leq{m}\leq\left\lceil{(T^{*}-\Delta)/{\varepsilon}}\right\rceil}\left\vert\varepsilon\sum_{i=0}^{m-1}\hat{\boldsymbol{Z}}^{\varepsilon}_{i}-\varepsilon\sum_{i=0}^{m-1}\hat{\boldsymbol{b}}^{\varepsilon}(\hat{\boldsymbol{x}}^{\varepsilon}(\Delta+i\varepsilon),\Delta+i\varepsilon)\right\vert\\
			&=C_2(\varepsilon).
		\end{aligned}
	\end{equation*}
	
	Utilizing the discrete version of Gronwall’s inequality, one can see that
	\begin{equation*}
		\begin{aligned}
			\max_{1\leq{m}\leq\left\lceil{(T^{*}-\Delta)/{\varepsilon}}\right\rceil}\big\vert\hat{\boldsymbol{x}}^{\varepsilon}&(\Delta+m\varepsilon)-\bar{\boldsymbol{x}}^{0}(\Delta+(m\wedge\nu)\varepsilon)\big\vert\\
			&\leq\left(C_{1}\varepsilon+C_2(\varepsilon)+C_3(\varepsilon)T+C_5T\varepsilon\right)(1+C_4\varepsilon)^{\left\lceil{(T^{*}-\Delta)/{\varepsilon}}\right\rceil}\\
			&\leq\left(C_{1}\varepsilon+C_2(\varepsilon)+C_3(\varepsilon)T+C_5T\varepsilon\right)e^{C_4T}.
		\end{aligned}
	\end{equation*}
	This completes the proof.
\end{proof}
\begin{lemma}\label{theoA20102}
	Let $\varphi:\mathbb{R}\mapsto\mathbb{R}$ be a nonnegative, even, convex function such that both $\varphi$ and $\varphi^{'}$ are absolutely continuous. If $\varphi(0)=\varphi^{'}(0)=0$ and $\varphi^{''}$ is nonnegative and non-increasing on $(0,+\infty)$ (for example, $\varphi(u)=\vert{u}\vert^{\theta},\,1<\theta\leq{2}$), then for each $\delta>0$,
	\begin{equation}
		\begin{aligned}
			P^{\varepsilon}\{C_2&(\varepsilon)>\delta\}\leq\frac{4}{d\varphi(\delta)}\sum_{i=0}^{\left\lceil{(T^{*}-\Delta)/{\varepsilon}}\right\rceil-1}\sum_{j=1}^{d}\\
			&E^{\varepsilon}\int_{\mathbb{R}^d}\varphi\left(\frac{d\varepsilon}{2}\left(z_j-\hat{b}^{\varepsilon}_{j}(\hat{\boldsymbol{x}}(\Delta+i\varepsilon),\Delta+i\varepsilon)\right)\right)\hat{\mu}^{\varepsilon}_{\hat{\boldsymbol{x}}(\Delta+i\varepsilon),\Delta+i\varepsilon}(\mathrm{d}\boldsymbol{z})
		\end{aligned}
	\end{equation}
	
\end{lemma}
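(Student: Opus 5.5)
This is a von Bahr--Esseen / Pinelis-type maximal inequality for martingales, applied coordinatewise. Write $C_2(\varepsilon)=\max_{1\le m\le M}|\boldsymbol{S}_m|$ with $M=\lceil (T^*-\Delta)/\varepsilon\rceil$. Here
\[
\boldsymbol{S}_m=\sum_{i=0}^{m-1}\boldsymbol{\xi}_i,\qquad \boldsymbol{\xi}_i=\varepsilon\bigl(\hat{\boldsymbol{Z}}^\varepsilon_i-\hat{\boldsymbol{b}}^\varepsilon(\hat{\boldsymbol{x}}(\Delta+i\varepsilon),\Delta+i\varepsilon)\bigr).
\]
By construction of $\hat{\boldsymbol{Z}}^\varepsilon_i$ (conditional law $\hat\mu^\varepsilon$ given $\hat{\mathcal F}_{<i}$ and the current state, with mean $\hat{\boldsymbol b}^\varepsilon$), each coordinate $S_{m,j}$ is a martingale in $m$ with respect to the natural filtration of $\hat{\boldsymbol x}^\varepsilon$. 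I would first reduce to one dimension. Since $|\boldsymbol S_m|\le\sum_j|S_{m,j}|$, the event $\{C_2(\varepsilon)>\delta\}$ is contained in $\bigcup_{j}\{\max_m|S_{m,j}|>\delta/d\}$. This explains the factor $d$ inside $\varphi$: after rescaling, it suffices to bound $P\{\max_m|\tfrac{d}{2}\cdot 2S_{m,j}/d|>\ldots\}$, and I would track constants so that the threshold becomes $\delta$ applied to the martingale with increments $\tfrac{d}{2}\xi_{i,j}$ (possibly with a factor-2 symmetrization).

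For a scalar martingale $(S_m)$ I would use that $\varphi$ is convex and nonnegative. Then $\varphi(S_m)$ is a submartingale, so Doob's maximal inequality gives $P\{\max_{m\le M}|S_m|>\lambda\}\le E\varphi(S_M)/\varphi(\lambda)$. The key step is the inequality
\[
E\varphi(S_M)\le C\sum_i E\varphi(\xi_i)
\]
for $\varphi$ with $\varphi''$ nonincreasing on $(0,\infty)$. I would prove it by induction via the one-step bound $E[\varphi(s+\xi)\mid\mathcal F]\le\varphi(s)+c\,E[\varphi(\xi)\mid\mathcal F]$ whenever $E[\xi\mid\mathcal F]=0$. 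To get the one-step bound, expand with Taylor's formula,
\[
\varphi(s+x)=\varphi(s)+\varphi'(s)x+\int_0^x(x-u)\varphi''(s+u)\,du.
\]
The linear term vanishes in conditional expectation. For the remainder, the monotonicity and evenness of $\varphi''$ (so $\varphi''(s+u)\le\varphi''$ evaluated near $|u|$) should give a bound by a multiple of $\varphi(x)$. This is the standard step, as for $|u|^\theta$ with $1<\theta\le2$, and I expect it to be the main obstacle. The danger is that $\varphi''(s+u)$ for $s+u$ near $0$ can exceed $\varphi''(u)$. The fix I would try first is a symmetrization or splitting argument: the region $|s+u|<|u|$ has length at most $|u|$-comparable, and there one compares against $\varphi(2x)\le 4\varphi(x)$. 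The latter holds since $\varphi'$ is concave on $(0,\infty)$, so $\varphi(2x)\le4\varphi(x)$. This doubling is presumably where the constants $4$ and $\tfrac12$ in the statement come from, realized by applying the bound to $\tfrac{d}{2}\xi$ and the threshold scaled accordingly.

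Finally, assemble. Sum the Doob bound over $j=1,\dots,d$. Use the tower property to write $E\varphi(\tfrac d2\xi_{i,j})$ as $E^\varepsilon\int\varphi\bigl(\tfrac{d\varepsilon}{2}(z_j-\hat b_j^\varepsilon)\bigr)\hat\mu^\varepsilon_{\hat{\boldsymbol x}(\Delta+i\varepsilon),\Delta+i\varepsilon}(d\boldsymbol z)$. Collect the constants into the prefactor $4/(d\varphi(\delta))$; the $1/d$ arises from the rescaling identity $\varphi(\delta)$ versus $\varphi(\delta/d)$ after substituting the $d$-scaled increments. If the precise constant does not match, I would adjust the splitting of $\delta$ among coordinates, since the later use of the lemma only needs some finite constant.
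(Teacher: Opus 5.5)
Your architecture is the paper's: a martingale, Doob's inequality for the convex $\varphi$, and a telescoping sum in which the martingale property kills the first-order term. The problem is that the crux is left open. What is needed is the deterministic estimate, uniform in the base point,
\[
\varphi(z+u)-\varphi(z)-u\varphi'(z)\le 4\varphi\bigl(|u|/2\bigr),\qquad z,u\in\mathbb{R}.
\]
Your splitting/doubling sketch is not carried out. It also aims at the wrong form: the stated right-hand side needs the one-step bound as $4\varphi(|u|/2)$, not $c\,\varphi(u)$. For instance, $c=2$ would only give $2\varphi(d\varepsilon(\cdot))$, which is at least $4\varphi(\tfrac{d\varepsilon}{2}(\cdot))$ by convexity and $\varphi(0)=0$.

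The missing idea is a rearrangement step. For $u\ge0$ (reflect otherwise), write the remainder as $\int_0^{u}\int_0^{v}\varphi''(z+w)\,dw\,dv$. Since $\varphi''$ is even and non-increasing on $(0,\infty)$, its integral over any interval of length $v$ is at most its integral over $[-v/2,v/2]$, which is $2\varphi'(v/2)$. Integrating in $v$ gives $4\varphi(u/2)$. This handles exactly the danger you flagged ($z+w$ passing through $0$) with no case split, and it is where both the $4$ and the $\tfrac12$ come from; doubling plays no role.

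Your reduction to coordinates also loses a factor $d$. You have $P\{\max_m|S_{m,j}|>\delta/d\}=P\{\max_m|dS_{m,j}|>\delta\}\le E\varphi(dS_{M,j})/\varphi(\delta)$. So the union bound yields the prefactor $4/\varphi(\delta)$, not $4/(d\varphi(\delta))$; no rescaling of $\varphi(\delta/d)$ produces the $1/d$.

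The paper instead applies Doob once, in $\mathbb{R}^d$, to the nonnegative submartingale $\varphi(|\boldsymbol{\xi}^{\varepsilon}(\Delta+m\varepsilon)|)$. This works because $\varphi(|\cdot|)$ is convex, as $\varphi$ is even, convex and nondecreasing on $[0,\infty)$. Only then does it pass to coordinates by Jensen: $\varphi(|\boldsymbol{\xi}|)\le\varphi(\sum_j|\xi_j|)\le d^{-1}\sum_j\varphi(d\xi_j)$, and this is where the $1/d$ comes from. Telescoping $E\,d^{-1}\sum_j\varphi(d\xi_j)$, with the estimate above applied to the increments $d\varepsilon(z_j-\hat{b}^{\varepsilon}_{j})$, gives the statement exactly.

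Your fallback that only some finite constant is needed is fine for the proof of Proposition~\ref{theo040101}. It does not, however, establish the lemma as stated.
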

\begin{proof}
	Let $\boldsymbol{\xi}^{\varepsilon}(\Delta)\triangleq\boldsymbol{0}$, and for $m=1,2,\cdots,\lfloor{T/\varepsilon}\rfloor$, define
	\begin{equation*}
		\begin{aligned}
			\boldsymbol{\xi}^{\varepsilon}(\Delta+m\varepsilon)&\triangleq\hat{\boldsymbol{x}}^{\varepsilon}(\Delta+m\varepsilon)-\boldsymbol{x}_T-\varepsilon\sum_{i=0}^{m-1}\hat{\boldsymbol{b}}^{\varepsilon}(\hat{\boldsymbol{x}}^{\varepsilon}(\Delta+i\varepsilon),\Delta+i\varepsilon)\\
			&=\varepsilon\sum_{i=0}^{m-1}\hat{\boldsymbol{Z}}^{\varepsilon}_{i}-\varepsilon\sum_{i=0}^{m-1}\hat{\boldsymbol{b}}^{\varepsilon}(\hat{\boldsymbol{x}}^{\varepsilon}(\Delta+i\varepsilon),\Delta+i\varepsilon),
		\end{aligned}
	\end{equation*}
	Then $\{\hat{\boldsymbol{x}}^{\varepsilon}(\Delta+m\varepsilon),\boldsymbol{\xi}^{\varepsilon}(\Delta+m\varepsilon)\}$ is a Markov chain, and  $\{\boldsymbol{\xi}^{\varepsilon}(\Delta+m\varepsilon)\}$ forms a discrete martingale. The martingale inequality implies
	\begin{equation*}
		P^{\varepsilon}\left\{\max_{1\leq{m}\leq\left\lceil{(T^{*}-\Delta)/{\varepsilon}}\right\rceil}\left\vert\boldsymbol{\xi}^{\varepsilon}(\Delta+m\varepsilon)\right\vert>\delta\right\}\leq\frac{E^{\varepsilon}{\varphi\left(\left\vert\boldsymbol{\xi}^{\varepsilon}\left(\Delta+\left\lceil{(T^{*}-\Delta)/{\varepsilon}}\right\rceil\varepsilon\right)\right\vert\right)}}{\varphi(\delta)}.
	\end{equation*} 
	The problem is to estimate the expectation on the right. 
	
	Since
	\begin{equation*}
		\vert\boldsymbol{\xi}\vert\leq\vert\xi_{1}\vert+\cdots+\vert\xi_{d}\vert\leq\sqrt{d}\vert\boldsymbol{\xi}\vert
	\end{equation*}
	we have
	\begin{equation}
		\varphi(\vert\boldsymbol{\xi}\vert)\leq\varphi\left(\vert\xi_{1}\vert+\cdots+\vert\xi_{d}\vert\right)\leq{d}^{-1}\sum_{j=1}^{d}\varphi(d\vert\xi_{j}\vert)={d}^{-1}\sum_{j=1}^{d}\varphi(d\xi_{j}).
	\end{equation}
	Define $\bar{\varphi}(\boldsymbol{\xi})\triangleq{d}^{-1}\sum_{j=1}^{d}\varphi(d\xi_{j})$. Then
	\begin{equation*}
		E^{\varepsilon}{\varphi\left(\left\vert\boldsymbol{\xi}^{\varepsilon}\left(\Delta+\left\lceil{(T^{*}-\Delta)/{\varepsilon}}\right\rceil\varepsilon\right)\right\vert\right)}\leq{E^{\varepsilon}\bar{\varphi}\left(\boldsymbol{\xi}^{\varepsilon}\left(\Delta+\left\lceil{(T^{*}-\Delta)/{\varepsilon}}\right\rceil\varepsilon\right)\right)}.
	\end{equation*}
	Let $\bar{\mathcal{F}}^{\text{NPPD}}_{\leq m}$ denote the $\sigma$-algebra generated by $\bar{\boldsymbol{x}}^{\varepsilon,\text{NPPD}}(\Delta+i\varepsilon),\;i=0,1,\cdots,m$, and define
	\begin{equation*}
		\hat{\mathcal{F}}_{\leq{m}}\triangleq\bar{\mathcal{F}}^{\text{NPPD}}_{\leq {m\wedge\nu}}=\left\{A\in\bar{\mathcal{F}}^{\text{NPPD}}_{\leq m}:A\cap\left\{\nu\leq{k}\right\}\in\bar{\mathcal{F}}^{\text{NPPD}}_{\leq k},\, k=0,1,\cdots,m\right\}.
	\end{equation*}
	We know that both $\hat{\boldsymbol{x}}^{\varepsilon}(\Delta+m\varepsilon)$ and $\boldsymbol{\xi}^{\varepsilon}(\Delta+m\varepsilon)$ are $\hat{\mathcal{F}}_{\leq{m}}$-measurable. Consequently,
	\begin{equation*}
		\begin{aligned}
			E^{\varepsilon}\bar{\varphi}(\boldsymbol{\xi}^{\varepsilon}(\Delta+&\left\lceil{(T^{*}-\Delta)/{\varepsilon}}\right\rceil\varepsilon))\\
			=&\bar{\varphi}(\boldsymbol{0})+\sum_{i=0}^{\left\lceil{(T^{*}-\Delta)/{\varepsilon}}\right\rceil-1}E^{\varepsilon}\left[\bar{\varphi}(\boldsymbol{\xi}^{\varepsilon}(\Delta+(i+1)\varepsilon))-\bar{\varphi}(\boldsymbol{\xi}^{\varepsilon}(\Delta+i\varepsilon))\right]\\
			=&\bar{\varphi}(\boldsymbol{0})+\sum_{i=0}^{\left\lceil{(T^{*}-\Delta)/{\varepsilon}}\right\rceil-1}E^{\varepsilon}\bigg\{E^{\varepsilon}\bigg[\bar{\varphi}(\boldsymbol{\xi}^{\varepsilon}(\Delta+(i+1)\varepsilon))-\bar{\varphi}(\boldsymbol{\xi}^{\varepsilon}(\Delta+i\varepsilon))\\
			&-(\boldsymbol{\xi}^{\varepsilon}(\Delta+(i+1)\varepsilon)-\boldsymbol{\xi}^{\varepsilon}(\Delta+i\varepsilon))\cdot\nabla_{\boldsymbol{\xi}}\bar{\varphi}(\boldsymbol{\xi}^{\varepsilon}(\Delta+i\varepsilon))\bigg\vert\hat{\mathcal{F}}_{\leq{i}}\bigg]\bigg\}
		\end{aligned}
	\end{equation*}
	Combined with the fact
	\begin{equation*}
		\begin{aligned}
			\varphi(u+z)-\varphi(z)-u\varphi^{'}(z)&=\int_{0}^{u}\int_{0}^{v}\varphi^{''}(w+z)\mathrm{d}w\mathrm{d}v\\
			&\leq2\int_{0}^{\vert{u}\vert}\int_{0}^{v/2}\varphi^{''}(w)\mathrm{d}w\mathrm{d}v\\
			&=4\varphi\left(\frac{\vert{u}\vert}{2}\right).
		\end{aligned}
	\end{equation*}
	we obtain
	\begin{equation*}
		\begin{aligned}
			E^{\varepsilon}\bar{\varphi}(\boldsymbol{\xi}^{\varepsilon}(\Delta&+\left\lceil{(T^{*}-\Delta)/{\varepsilon}}\right\rceil\varepsilon))\leq{4}d^{-1}\sum_{i=0}^{\left\lceil{(T^{*}-\Delta)/{\varepsilon}}\right\rceil-1}\sum_{j=1}^{d}\\
			&E^{\varepsilon}\int_{\mathbb{R}^d}\varphi\left(\frac{d\varepsilon}{2}\left(z_j-\int_{\mathbb{R}^d}w_j\hat{\mu}^{\varepsilon}_{\hat{\boldsymbol{x}}(\Delta+i\varepsilon),\Delta+i\varepsilon}(\mathrm{d}\boldsymbol{w})\right)\right)\hat{\mu}^{\varepsilon}_{\hat{\boldsymbol{x}}(\Delta+i\varepsilon),\Delta+i\varepsilon}(\mathrm{d}\boldsymbol{z}).
		\end{aligned}
	\end{equation*}
	This completes the proof.
\end{proof}
\begin{proof}[Proof of Proposition \ref{theo040101}]
	Let $\varphi(u)=\vert{u}\vert^{1+\varkappa_1}$. Utilizing the generalized mean inequality:
	\begin{equation*}
		\left(\frac{\sum_{j=1}^{d}\vert\xi_{j}\vert^{1+\varkappa_1}}{d}\right)^{1/(1+\varkappa_1)}\leq\left(\frac{\sum_{j=1}^{d}\vert\xi_{j}\vert^{2}}{d}\right)^{1/2},
	\end{equation*}
	one can see that
	\begin{equation*}
		\begin{aligned}
			P^{\varepsilon}\{C_2(\varepsilon)>\delta\}\leq&4\left(\frac{\sqrt{d}\varepsilon}{2\delta}\right)^{1+\varkappa_1}\sum_{i=0}^{\left\lceil{(T^{*}-\Delta)/{\varepsilon}}\right\rceil-1}\\
			&E^{\varepsilon}\int_{\mathbb{R}^d}\left\vert\boldsymbol{z}-\hat{\boldsymbol{b}}^{\varepsilon}(\hat{\boldsymbol{x}}(\Delta+i\varepsilon),\Delta+i\varepsilon)\right\vert^{1+\varkappa_1}\hat{\mu}^{\varepsilon}_{\hat{\boldsymbol{x}}(\Delta+i\varepsilon),\Delta+i\varepsilon}(\mathrm{d}\boldsymbol{z})\\
			\leq&4\left(\frac{\sqrt{d}\varepsilon}{2\delta}\right)^{1+\varkappa_1}\left\lceil{(T^{*}-\Delta)/{\varepsilon}}\right\rceil G_2\\
			\leq&\frac{2^{1-\varkappa_1}d^{(1+\varkappa_1)/2}TG_2}{\delta^{1+\varkappa_1}}\varepsilon^{\varkappa_1}\to{0},\quad\text{as}\;\varepsilon\to{0}.
		\end{aligned}
	\end{equation*}
	Then Lemma \ref{theoA20101} implies that for any $T^{*}\in(0,T)$, there exists a constant $\delta_0$ so that for every $\eta>0$,
	\begin{equation*}
		\lim_{\varepsilon\to{0}}P^{\varepsilon}\left\{\max_{1\leq{m}\leq\left\lceil{(T^{*}-\Delta)/{\varepsilon}}\right\rceil}\big\vert\bar{\boldsymbol{x}}^{\varepsilon,\text{NPPD}}(\Delta+(m\wedge\nu)\varepsilon)-\bar{\boldsymbol{x}}^{0}(\Delta+(m\wedge\nu)\varepsilon)\big\vert>\eta\right\}=0.
	\end{equation*}
	Note that 
	\begin{equation*}
		\left\vert\bar{\boldsymbol{x}}^{0}(\Delta+(\left\lceil{(T^{*}-\Delta)/{\varepsilon}}\right\rceil\wedge\nu)\varepsilon)-\bar{\boldsymbol{x}}^{0}(\Delta+((\left\lceil{(T^{*}-\Delta)/{\varepsilon}}\right\rceil-1)\wedge\nu)\varepsilon)\right\vert\leq{O}(\varepsilon),
	\end{equation*}
	and
	\begin{equation*}
		B_{\delta_{0}}(\bar{\boldsymbol{x}}^{0}(\Delta+(m\wedge\nu)\varepsilon))\subset\bar{\Sigma}_{\delta_{0},[0,T^{*}]},\; \text{for}\; m=0,1,\cdots,\left\lceil{(T^{*}-\Delta)/{\varepsilon}}\right\rceil-1.
	\end{equation*}
	There must exist a constant $\varepsilon_0>0$ so that for all $\varepsilon<\varepsilon_0$,
	\begin{equation*}
		B_{\delta_{0}/2}(\bar{\boldsymbol{x}}^{0}(\Delta+(m\wedge\nu)\varepsilon))\subset\bar{\Sigma}_{\delta_{0},[0,T^{*}]},\;\text{for}\; m=0,1,\cdots,\left\lceil{(T^{*}-\Delta)/{\varepsilon}}\right\rceil. 
	\end{equation*} 
	If
	\begin{equation*}
		\max_{1\leq{m}\leq\left\lceil{(T^{*}-\Delta)/{\varepsilon}}\right\rceil}\big\vert\bar{\boldsymbol{x}}^{\varepsilon,\text{NPPD}}(\Delta+(m\wedge\nu)\varepsilon)-\bar{\boldsymbol{x}}^{0}(\Delta+(m\wedge\nu)\varepsilon)\big\vert\leq\eta\leq\delta/2<\delta_{0}/2,
	\end{equation*}
	then we have
	\begin{equation*}
		\bar{\boldsymbol{x}}^{\varepsilon,\text{NPPD}}(\Delta+(m\wedge\nu)\varepsilon)\in\bar{\Sigma}_{\delta_{0},[0,T^{*}]},\; \text{for}\;m=0,1,\cdots,\left\lceil{(T^{*}-\Delta)/{\varepsilon}}\right\rceil.
	\end{equation*}
	Consequently, $\left\lceil{(T^{*}-\Delta)/{\varepsilon}}\right\rceil<\nu$. This means
	\begin{equation*}
		\max_{1\leq{m}\leq\left\lceil{(T^{*}-\Delta)/{\varepsilon}}\right\rceil}\big\vert\bar{\boldsymbol{x}}^{\varepsilon,\text{NPPD}}(\Delta+m\varepsilon)-\bar{\boldsymbol{x}}^{0}(\Delta+m\varepsilon)\big\vert\leq\delta/2,
	\end{equation*}
	and therefore for sufficiently small $\varepsilon$,
	\begin{equation*}
		\sup_{t\in[0,T^{*}]}\left\vert\bar{\boldsymbol{x}}^{\varepsilon,\text{NPPD}}(t)-\bar{\boldsymbol{x}}^{0}(t)\right\vert\leq\delta/2+O(\varepsilon)\leq\delta,
	\end{equation*}
	so we have proved the claim.
\end{proof}
\section{An Algorithm for Calculating the Non-stationary Prehistory Probability Density}\label{secA3}
\textbf{Initialization:}
\begin{itemize}
	\item[(a)] Choose a domain $D=[x_l,x_r)\subset\mathbb{R}$ such that $x_0,\,x_T\in{D}$, and 
	\begin{equation*}
		\min_{x\in\partial{D},\,0\leq{t}\leq{T}}S(x,t\vert{x}_{0})>S(x_T,T\vert x_0).
	\end{equation*}
	(This ensures that the NOP connecting $x_0$ and $x_T$ in the time span $T$ lies entirely within the domain $D$, and that neglecting trajectories escaping from $D$ before the moment $T$ has a negligible effect on the computation of the NOP.)
	\item[(b)] Partition the state space $\mathbb{R}$ into $N_x+1$ subsets: For $i=1,2,\cdots,N_x$, define $i$-th subset as $D_i\triangleq[x_l+(x_r-x_l)(i-1)/N_x,x_l+(x_r-x_l)i/N_x)$. Choose a representative element in $D_i$ as $x(i)\triangleq x_l+(x_r-x_l)(i-1/2)/N_x$. Define $(N_x+1)$-th subset as $D_{N_x+1}\triangleq\mathbb{R}\setminus{D}$. 
	\item[(c)] Let $\tau^\varepsilon$ be the first exit time of $x^\varepsilon(t)$ from $D$. Then for any $T>0$ and sufficiently small $\varepsilon$, the process $x^\varepsilon(t)$ can be uniformly approximated on $[0,T]$ by $x^{\varepsilon}(t\wedge\tau^{\varepsilon})$, which is further approximated by a discrete Markov chain with $N_x+1$ states, whose transition probabilities are given by
	\begin{equation*}
		\begin{cases}
			\begin{aligned}
				P_{i,j}&=P(x^{\varepsilon}((n+1)\varepsilon)\in D_j\vert x^{\varepsilon}(n\varepsilon)=x(i))\\
				&=\mu_{x(i)}(\varepsilon^{-1}(D_j-x(i)))
			\end{aligned} & 1\leq i\leq N_x,\;1\leq j\leq N_x,\\
			P_{i,j}=1-\sum_{j=1}^{N_x}P_{i,j}\;\; &1\leq i\leq N_x,\;j=N_x+1,\\
			P_{i,j}=0\;\;&i=N_x+1,\;1\leq j\leq N_x,\\
			P_{i,j}=1\;\;&i=N_x+1,\;j=N_x+1.\\
		\end{cases}
	\end{equation*}
\end{itemize}

\textbf{Algorithm:}
\begin{itemize}
	\item[(a)] Define 
	\begin{equation*}
		p_i(n)\triangleq p^{\varepsilon}(D_i,n\varepsilon\vert x_0,0)\simeq p^{\varepsilon}(x(i),n\varepsilon\vert x_0,0)\cdot(x_r-x_l)/N_x,
	\end{equation*}
	for $n=0,1,\cdots,\lfloor{t/\varepsilon}\rfloor$. Then $p_i(n)$ can be approximated recursively by
	\begin{equation*}
		p_j(n+1)=\sum_{i=1}^{N_x+1}p_i(n)P_{i,j},\quad j=1,\cdots,N_x+1, \; n=0,\cdots, \lfloor{T/\varepsilon}\rfloor-1,
	\end{equation*}
	with the initial condition
	\begin{equation*}
		p_i(0)=\begin{cases}
			1,\quad & x_0\in D_i,\\
			0,\quad & \text{otherwise}.\\\end{cases}
	\end{equation*}
	\item[(b)] The time-inhomogeneous transition probabilities for the reversed process $\bar{x}^{\varepsilon,\text{NPPD}}$ are given by
	\begin{equation*}
		\bar{P}^{(n)}_{i,j}\triangleq\begin{cases}
			\frac{p_j(n)P_{j,i}}{p_{i}(n+1)},\quad & p_{i}(n+1)>0,\\
			0,\quad &p_{i}(n+1)=0.
		\end{cases}
	\end{equation*}
	\item[(c)] The NPPD can be approximated as
	\begin{equation*}
		q^{\varepsilon,\text{NPPD}}(x(i),t;x_T,T;x_0,0)\simeq \bar{p}_{i}(\lfloor{t/\varepsilon}\rfloor)\cdot N_x/(x_r-x_l),
	\end{equation*}
	where $\bar{p}_{i}(n)$ is computed via the backward recursion:
	\begin{equation*}
		\bar{p}_{j}(n)=\sum_{i=1}^{N_x+1}\bar{p}_{i}(n+1)\bar{P}_{i,j}^{(m)},\quad j=1,\cdots,N_x+1,\; n=\lfloor{T/\varepsilon}\rfloor-1,\cdots,0,
	\end{equation*}
	with the terminal condition
	\begin{equation*}
		\bar{p}_{i}(\lfloor{T/\varepsilon}\rfloor)=\begin{cases}
			1,\quad & x_T\in D_i,\\
			0,\quad & \text{otherwise}.
		\end{cases}
	\end{equation*}
\end{itemize}
\bibliographystyle{elsarticle-num} 
\bibliography{References}
\end{document}